\newcommand\dboxed[1]{\dbox{\ensuremath{#1}}}
\def\inte#1{
\displaystyle\mathop{#1\kern0pt}^\circ }
\def\virgp{\raise 2pt\hbox{,}}
\def\cdotpv{\raise 2pt\hbox{;}}
\def\C{\mathop{\mathbb C\kern 0pt}\nolimits}
\def\DD{\mathop{\mathbb D\kern 0pt}\nolimits}
\def\EE{\mathop{{\mathbb E \kern 0pt}}\nolimits}
\def\K{\mathop{\mathbb K\kern 0pt}\nolimits}
\def\N{\mathop{\mathbb N\kern 0pt}\nolimits}
\def\Q{\mathop{\mathbb Q\kern 0pt}\nolimits}
\def\R{\mathop{\mathbb R\kern 0pt}\nolimits}
\def\SS{\mathop{\mathbb S\kern 0pt}\nolimits}
\def\ZZ{\mathop{\mathbb Z\kern 0pt}\nolimits}
\def\TT{\mathop{\mathbb T\kern 0pt}\nolimits}
\def\P{\mathop{\mathbb P\kern 0pt}\nolimits}
\newcommand{\beq}{\begin{equation}}
\newcommand{\eeq}{\end{equation}}
\newcommand{\ben}{\begin{eqnarray}}
\newcommand{\een}{\end{eqnarray}}
\newcommand{\beno}{\begin{eqnarray*}}
\newcommand{\eeno}{\end{eqnarray*}}
\newtheorem{thm}{Theorem}[section]
\newtheorem{lem}{Lemma}[section]
\newtheorem{prop}{Proposition}[section]
\newtheorem*{Main Theorem}{Main Theorem}
\newtheorem{theorem}{Theorem}[section]
\newtheorem{remark}[theorem]{Remark}
\numberwithin{equation}{section}
\begin{document}
\title[Ill-posedness and shock formation of compressible MHD]{Low regularity ill-posedness and shock formation\\ for 3D ideal compressible MHD}

\author[Xinliang An]{Xinliang An}
\address{Department of Mathematics, National University of Singapore\\ Singapore}
\email{matax@nus.edu.sg}

\author[Haoyang Chen]{Haoyang Chen}
\address{Department of Mathematics, National University of Singapore\\ Singapore}
\email{hychen@nus.edu.sg}

\author[Silu Yin]{Silu Yin}
\address{Department of Mathematics, Hangzhou Normal University\\ Hangzhou, China}
\email{yins@hznu.edu.cn}

\begin{abstract}
The study of magnetohydrodynamics (MHD) significantly boosts the understanding and development of solar physics, planetary dynamics and controlled nuclear fusion. Dynamical properties of the MHD system involve nonlinear interactions of waves with multiple travelling speeds (the fast and slow magnetosonic waves, the Alfv\'{e}n wave and the entropy wave). One intriguing topic is the shock phenomena accompanied by the magnetic field, which have been affirmed by astronomical observations. However, permitting the residence of all above multi-speed waves, mathematically, whether one can prove shock formation for three dimensional (3D) MHD is still open. The multiple-speed nature of the MHD system makes it fascinating and challenging. In this paper, we report our recent progress in answering the above question. For 3D ideal compressible MHD, we construct planar symmetric examples of shock formation allowing the presence of all characteristic waves with multiple wave speeds. This also answers a question raised by Majda on conservation law in 1984. Building on our construction, we further prove that the Cauchy problem for 3D ideal MHD is $H^2$ ill-posed. And this is caused by the shock formation. In particular, when the magnetic field is absent, we also provide a desired low-regularity ill-posedness result for the 3D compressible Euler equations, and it is sharp with respect to the regularity of the fluid velocity. Our proof for 3D MHD is based on a coalition of a carefully designed algebraic approach and a geometric approach. To trace the nonlinear interactions of various waves, we algebraically decompose the 3D ideal MHD equations into a $7\times 7$ non-strictly hyperbolic system. Via detailed calculations, we reveal its hidden subtle structures. With them we give a complete description of MHD dynamics up to the earliest singular event, when a shock forms.
\end{abstract}

\maketitle

\tableofcontents

\section{Introduction}
Magnetohydrodynamics (MHD) plays a crucial role in astrophysics, planetary magnetism and controlled fusion reactor. It operates on every scales, from vast on stellar winds and solar magnetic activity to the small on controlled fusion plasmas.

Mathematically, the equations of MHD system describe the dynamics and magnetic properties of electrically conducting fluid or plasma. To study planetary dynamics, liquid metals and electrolyte, vibrant mathematical explorations of incompressible MHD equations are conducted, ranging from well-posedness theory to singularity formation mechanism and covering topics from Cauchy problems to free boundary problems.

Compared with the incompressible counterpart, there is less mathematical literature investigating the compressible MHD. On the contrary, the physical phenomena governed by compressible MHD are extremely rich, which include the jets from accretion disks at a galactic scale, stellar magnetism describing solar activity, and controlled nuclear fusions in a designed tokamak.
\begin{figure}[htbp]
\centering
\begin{minipage}[t]{0.48\textwidth}
\centering
\includegraphics[width=7.3cm,height=3.6cm]{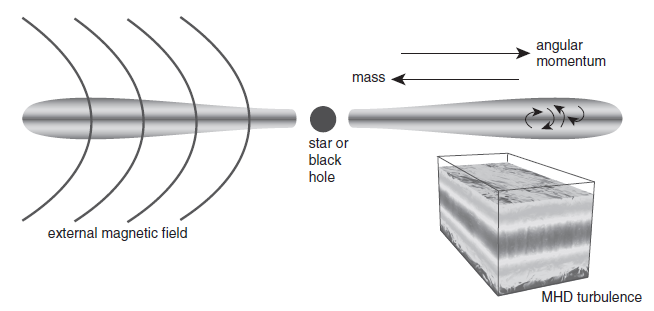}
\caption{\small{\bf Schematic of an accretion disk.} From \cite{davidson} by Davidson.}
\end{minipage}
\begin{minipage}[t]{0.48\textwidth}
\centering
\includegraphics[width=6cm]{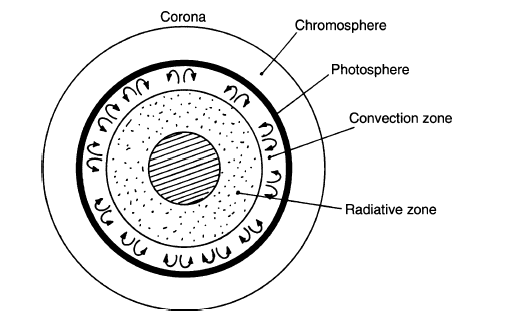}
\caption{\small{\bf The structure of the sun.} Convection zone is the source of MHD activity, including the initiation of coronal flux tubes and sunspots. From \cite{davidson} by Davidson.}
\end{minipage}
\end{figure}

Similarly to other compressible fluids with zero viscosity, shock waves are anticipated to play a significant role in the context of 3D ideal compressible MHD with negligible resistivity/viscosity. For instance, a shock wave is expected to form when the solar wind interacts with the earth's magnetic field.
\begin{figure}[htbp]
\centering
\includegraphics[width=0.6\linewidth]{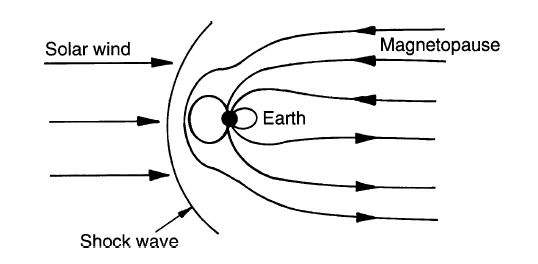}
\captionof{figure}{\small{\bf The interaction of the earth's magnetic field with the solar wind.} From \cite{davidson} by Davidson.}
\label{solar wind}
\end{figure}

Shock-wave-related phenomena of MHD raise broad interests among the physical and the numerical communities. The evolution of compressible MHD involves interactions of waves with multiple speeds: the fast and slow magnetosonic waves, the Alfv\'{e}n wave and the entropy wave. Mathematically, an \underline{open} question is to give a rigorous proof of shock formation allowing the residence of all above multi-speed waves.

In this paper, under planar symmetry we provide a \underline{confirmative} answer to the above question for the 3D ideal compressible MHD. We give a constructive proof including detailed analysis of the solutions' behaviours up to the (shock) singularity. The proof relies on a combination of a carefully designed algebraic approach and a geometric approach. And the shock forms due to the compression of characteristics. Furthermore, based on shock formation, we prove that the Cauchy problems of the 3D ideal compressible MHD equations are ill-posed in $H^2(\mathbb{R}^3)$. We show that the underlying mechanism of this ill-posedness is the shock formation. Our ill-posedness result for the 3D ideal compressible MHD allows nonlinear interactions of multiple-speed waves. In the absence of the magnetic field, we also provide the desired ill-posedness result for the 3D compressible Euler equations.

Our research toward low-regularity ill-posedness for MHD is inspired by remarkable works on wave equations. Under planar symmetry, Lindblad gave sharp counterexamples to the low-regularity local well-posedness for semilinear and certain quasilinear wave equations in \cite{lindblad93,Lind96,Lind98}. In \cite{lindblad17}, Ettinger-Lindblad generalized the above results to the Einstein's equations and constructed a sharp counterexample for local well-posedness of Einstein vacuum equations in wave coordinates. An exploration by Granowski \cite{Ross} later showed that Lindblad's $H^s$ ill-posedness in \cite{Lind98} is stable under general perturbations. Without any symmetry assumption, Alinhac \cite{Alinhac99,Alinhac99II} firstly proved shock formation for solutions to quasilinear wave equations in more than one spatial dimension. He employed a Nash-Moser iteration scheme to overcome the loss of derivatives in energy estimates. This approach relies heavily on a non-degeneracy condition posed on initial data and fails to reveal information beyond the first blow-up point. In 2009, a breakthrough \cite{christodoulou10} was made by Christodoulou. Based on a fully geometric approach, he sharpened Alinhac's results by giving a detailed understanding and a complete description of shock formation for 3D irrotational Euler equations. With the full use of the geometry therein, Christodoulou derived sharper energy estimates which avoid the potential loss of derivatives. This geometric approach was later extended to a larger class of wave equations and Euler equations with or without vorticity. See Christodoulou-Miao \cite{christodoulou-miao}, Luk-Speck \cite{Speck-luk,Speck-luk2}, Miao-Yu \cite{miao}, Speck \cite{speckbk,Speck18}, Speck-Holzegel-Luk-Wong \cite{Speck16}. Recently, applying a different approach via self-similar ansatz, Buckmaster-Shkoller-Vicol\cite{buckmaster1,buckmaster} constructed shock formation for compressible Euler equations in 2D and 3D allowing the presence of non-trivial vorticity.

Compared with the single wave equation, fewer results are known for quasilinear wave systems. Quasilinear wave systems with physical background (e.g. elasticity and MHD) typically describe phenomena involving interactions of waves with multiple travelling speeds. An important and challenging task is to study singularity formation for these multi-wave-speed hyperbolic systems, including elastic waves, compressible MHD, crystal optics, self-gravitating relativistic fluids, etc. In \cite{majda}, Majda addressed this as an open question in conservation law on proving shock formation for general $m \times m$ non-genuinely non-strictly hyperbolic systems. Motivated by this question, we are developing a \underline{new} approach in this direction and plan to apply it to the aforementioned physical systems. The first application of this approach was given by us in \cite{an}. In \cite{an}, we generalized Lindblad's work on a scalar wave equation in \cite{Lind98} by showing that the Cauchy problems for 3D elastic waves, a physical system with multiple wave speeds, are ill-posed in $H^3(\mathbb{R}^3)$. We also quantitatively demonstrated that the ill-posedness is driven by shock formation. Our estimates in \cite{an} rely on the smallness of certain coefficients of nonlinear terms in the elastic wave system. In this paper, we further advance our approach, equipped with a renormalization procedure when choosing right eigenvectors and more robust a priori estimates which applies to more general hyperbolic systems without the smallness coefficients as in the elastic wave system. Now we can prove a desired result for an important quasilinear non-strictly hyperbolic system with multiple wave-speed: the 3D ideal compressible MHD system. Compared with the elastic waves, the MHD system possesses more subtle and more complex structures, which imposes many new challenges to establish ill-posedness and shock formation. (See Subsection \ref{difficulty} for details.) The method introduced here would act as a firm step towards the study of general non-genuinely nonlinear non-strictly hyperbolic systems.

\subsection{Main theorems}\label{sec1.1}
We start from inducing the equations. The 3D ideal compressible MHD, composed of the Euler equations of fluid dynamics and the coupled Maxwell's equations for magnetic field, forms a quasilinear hyperbolic system:
\begin{equation}\label{MHD}
\left\{\begin{split}
&\partial_t\varrho+\nabla\cdot(\varrho u)=0,\\
&\varrho\{\partial_t+(u\cdot\nabla)\}u+\nabla p+\mu_0 H\times\text{rot}H=0,\\
&\partial_t H-\text{rot}(u\times H)=0,\\
&\partial_t S+(u\cdot\nabla)S=0,\\
&\nabla\cdot H=0.
\end{split}
\right.
\end{equation}
Here, $\mu_0$ is the magnetic permeability constant, $\varrho$ is the fluid density\footnote{In our analysis, the value of $\varrho$ is close to $1$.}, $u=(u_1,u_2,u_3) \in \mathbb{R}^3$ is the fluid velocity, $H=(H_1,H_2,H_3) \in \mathbb{R}^3$ is the magnetic field intensity, $S$ is the entropy and $p$ is the pressure satisfying the equation of state $p=p(\varrho,S).$ To describe the solar wind, following \cite{davidson} we use the polytropic equation of state
\begin{equation}\label{state}
p=A e^S\varrho^\gamma
\end{equation}
with $A$ being a positive constant and $\gamma>1$ being the adiabatic gas constant.

Following Lindblad's approach toward proving ill-posedness, we study \eqref{MHD} under planar symmetry. By Gauss's Law, $H_1$ is always a constant. We denote this constant parameter by $\kappa$. The constant parameter $\kappa$ satisfies the following condition (see also \eqref{h1}) 
\begin{equation} \label{h10}
	\kappa^2 \ll \min\{\mu_0^{-1}A\gamma,1\}.
\end{equation}	
Here,  we impose the smallness of $H_1$ to guarantee the strict separation of two characteristic speeds (see Section \ref{pre} for the details). See Section \ref{pre} for the details. Therefore, we actually work with the following system:
\begin{equation} \label{reduce}
\partial_t \Phi(x,t)+A(\Phi)\partial_{x} \Phi(x,t)=0,
\end{equation}
where $\Phi=(u_1,u_2,u_3,\varrho-1, H_2,H_3,S)^T$ and $A(\Phi)\in \mathbb{R}^{7\times 7}$ is a $7\times 7$ matrix. For notational simplicity, we denote $x_1$ by $x$ in this paper. Our proof relies on a carefully designed algebraic wave decomposition of this $7\times 7$ system. See Subsection \ref{step} and Section \ref{pre} for details. The delicate structures of the decomposed system play a crucial role in our proof. In the following theorem, these subtle structures are revealed for the \underline{first} time.
\begin{thm}[{\bf Structures of MHD equations}] \label{structurethm}
For the MHD system \eqref{reduce}: $\partial_t \Phi(x,t)+A(\Phi)\partial_{x} \Phi(x,t)=0$, with $H_1=\kappa$ being a constant parameter satisfying \eqref{h10}, one can rewrite it into the following form:
\begin{equation} \label{decomposed system}
\begin{cases}
  \partial_{s_i}\rho_i=c_{ii}^iv_i+\Big(\sum\limits_{m\neq i}c_{im}^iw_m\Big)\rho_i,\\
  \partial_{s_i}w_i=-c_{ii}^iw_i^2+\Big(\sum\limits_{m\neq i}(-c_{im}^i+\gamma_{im}^i)w_m\Big)w_i+\sum\limits_{m\neq i,k\neq i\atop m\neq k}\gamma_{km}^iw_kw_m,\\
  \partial_{s_i}v_i=\Big(\sum\limits_{m\neq i}\gamma_{im}^iw_m\Big)v_i+\sum\limits_{m\neq i,k\neq i\atop m\neq k}\gamma_{km}^iw_kw_m\rho_i.
  \end{cases}
\end{equation}
Here, $\partial_{s_i}=\lambda_i\partial_x+\partial_t$ with $\lambda_i$ being the $i^{\text{th}}$ eigenvalue of the $7 \times 7$ matrix $A(\Phi)$. And $\rho_i:=\partial_{z_i}X_i$ is the inverse density of the characteristics\footnote{We use $(X_i(z_i,s_i),s_i)$ to denote the trajectory of the characteristics with $(z_i,s_i)$ being the characteristic coordinates. See \eqref{flow} for detailed definitions.}. We further define $w_i:=l_i\partial_x\Phi$ and $v_i:=\rho_iw_i$, where $l_i$ is the $i^{\text{th}}$ left eigenvector of $A(\Phi)$. With a 17-page calculation, we prove that the coefficients $c^i_{ii},c^i_{im},\gamma^i_{im},\gamma^i_{km}$ being non-trivial functions of $\Phi$ are \underline{all} of order $1$. (See Section \ref{structure} and Appendix \ref{appendixB} for the details\footnote{These coefficients could potentially be singular due to the singular factors present in the eigenvectors.}.) This enables us to reveal the following structures for the system \eqref{decomposed system}. For $\rho_i$, we have
\small\begin{equation}\label{infmrho}
 \left\{ \begin{split}
    \partial_{s_1}\rho_1\thicksim&v_1+\rho_1\big(\sum\limits_{i\neq 1}w_i\big),\\ \partial_{s_2}\rho_2\thicksim&\cancel{v_2}+\boxed{\rho_2w_3}+\rho_2\big(\sum\limits_{i\neq 2,3}w_i\big),\\
    \partial_{s_3}\rho_3\thicksim& v_3+\cancel{\rho_3 w_2}+\rho_3\big(\sum\limits_{i\neq 2,3}w_i\big),\\
    \partial_{s_4}\rho_4\thicksim& \cancel{v_4}+\rho_4\big(\sum\limits_{i\neq 4}w_i\big),\\
    \partial_{s_5}\rho_5\thicksim& v_5+\boxed{\rho_5w_6}+\rho_5\big(\sum\limits_{i\neq 5,6}w_i\big),\\
    \partial_{s_6}\rho_6\thicksim&\cancel{v_6}+\cancel{\rho_6 w_5}+\rho_6\big(\sum\limits_{i\neq 5,6}w_i\big),\\
    \partial_{s_7}\rho_7\thicksim&v_7+\rho_7\big(\sum\limits_{i\neq 7}w_i\big).\\
  \end{split}\right.
\end{equation}
Here, the notation $\cancel{v_2}$ denotes the absence of the term $v_2$, $\boxed{\rho_2w_3}$ denotes the strong interaction term. For $w_i$, we have
\small{\begin{equation}\label{wsketch1}
 \left\{ \begin{split}
    \partial_{s_1} w_1\thicksim&(w_1)^2+\dboxed{(\lambda_2-\lambda_3)w_2w_3}+\dboxed{(\lambda_5-\lambda_6)w_5w_6}+{\sum_{m\neq 1,k\neq 1, m\neq k\atop (m,k)\neq(2,3),(m,k)\neq(5,6)}w_mw_k},\\
    \partial_{s_2} w_2\thicksim&\cancel{(w_2)^2}+\boxed{w_2w_3}+\dboxed{(\lambda_5-\lambda_6)w_5w_6}+\sum_{m\neq 2,k\neq 2, m\neq k\atop (m,k)\neq(5,6)}w_mw_k,\\
    \partial_{s_3} w_3\thicksim&(w_3)^2+\dboxed{(\lambda_2-\lambda_3)w_2w_3}+\dboxed{(\lambda_5-\lambda_6)w_5w_6}+\sum_{m\neq 3,k\neq 3, m\neq k\atop (m,k)\neq(5,6)}w_mw_k,\\
    \partial_{s_4} w_4\thicksim&\cancel{(w_4)^2}+\dboxed{(\lambda_2-\lambda_3)w_2w_3}+\dboxed{(\lambda_5-\lambda_6)w_5w_6}+\sum_{m\neq 4,k\neq 4, m\neq k\atop (m,k)\neq(2,3),(m,k)\neq(5,6)}w_mw_k,\\
    \partial_{s_5} w_5\thicksim&(w_5)^2+\dboxed{(\lambda_2-\lambda_3)w_2w_3}+\dboxed{(\lambda_5-\lambda_6)w_5w_6}+\sum_{m\neq 5,k\neq 5, m\neq k\atop (m,k)\neq(5,6)}w_mw_k,\\
    \partial_{s_6} w_6\thicksim&\cancel{(w_6)^2}+\dboxed{(\lambda_2-\lambda_3)w_2w_3}+\boxed{w_5w_6}+\sum_{m\neq 6,k\neq 6, m\neq k\atop (m,k)\neq(2,3)}w_mw_k,\\
    \partial_{s_7} w_7\thicksim&(w_7)^2+\dboxed{(\lambda_2-\lambda_3)w_2w_3}+\dboxed{(\lambda_5-\lambda_6)w_5w_6}+\sum_{m\neq 7,k\neq 7, m\neq k\atop (m,k)\neq(2,3),(m,k)\neq(5,6)}w_mw_k.\\
  \end{split}\right.
\end{equation}}
Here, the notation $\dboxed{(\lambda_2-\lambda_3)w_2w_3}$ denotes the weak interaction term. For $v_i$, we have
\small{\begin{equation}\label{infmv}
 \left\{ \begin{split}
    \partial_{s_1}v_1\thicksim&v_1\big(\sum\limits_{i\neq 1}w_i\big)+\rho_1\bigg(\dboxed{(\lambda_2-\lambda_3)w_2w_3}+\dboxed{(\lambda_5-\lambda_6)w_5w_6}+\sum_{m\neq 1,k\neq 1, m\neq k\atop (m,k)\neq(2,3),(m,k)\neq(5,6)}w_mw_k\bigg),\\
    \partial_{s_2}v_2\thicksim&\dboxed{(\lambda_2-\lambda_3)v_2 w_3}+\big(\sum\limits_{i\neq 2,3}w_i\big)v_2+\rho_2\bigg(\dboxed{(\lambda_5-\lambda_6)w_5w_6}+\sum_{m\neq 2,k\neq 2, m\neq k\atop (m,k)\neq(5,6)}w_mw_k\bigg),\\
    \partial_{s_3}v_3\thicksim& \dboxed{(\lambda_2-\lambda_3)v_3 w_2}+\big(\sum\limits_{i\neq 2,3}w_i\big)v_3+\rho_3\bigg(\dboxed{(\lambda_5-\lambda_6)w_5w_6}+\sum_{m\neq 3,k\neq 3, m\neq k\atop (m,k)\neq(5,6)}w_mw_k\bigg),\\
    \partial_{s_4}v_4\thicksim&v_4\big(\sum\limits_{i\neq 4}w_i\big)+\rho_4\bigg(\dboxed{(\lambda_2-\lambda_3)w_2w_3}+\dboxed{(\lambda_5-\lambda_6)w_5w_6}+\sum_{m\neq 4,k\neq 4, m\neq k\atop (m,k)\neq(2,3),(m,k)\neq(5,6)}w_mw_k\bigg),\\
    \partial_{s_5}v_5\thicksim&\dboxed{(\lambda_5-\lambda_6)v_5 w_6}+\big(\sum\limits_{i\neq 5,6}w_i\big)v_5+\rho_5\bigg(\dboxed{(\lambda_2-\lambda_3)w_2w_3}+\sum_{m\neq 5,k\neq 5, m\neq k\atop (m,k)\neq(2,3)}w_mw_k\bigg),\\
    \partial_{s_6}v_6\thicksim&\dboxed{(\lambda_5-\lambda_6)v_6 w_5}+\big(\sum\limits_{i\neq 5,6}w_i\big)v_6+\rho_6\bigg(\dboxed{(\lambda_2-\lambda_3)w_2w_3}+\sum_{m\neq 6,k\neq 6, m\neq k\atop (m,k)\neq(2,3)}w_mw_k\bigg),\\
    \partial_{s_7}v_7\thicksim&v_7\big(\sum\limits_{i\neq 7}w_i\big)+\rho_7\bigg(\dboxed{(\lambda_2-\lambda_3)w_2w_3}+\dboxed{(\lambda_5-\lambda_6)w_5w_6}+\sum_{m\neq 7,k\neq7, m\neq k\atop (m,k)\neq(2,3),(m,k)\neq(5,6)}w_mw_k\bigg).\\
  \end{split}\right.
\end{equation}}
\end{thm}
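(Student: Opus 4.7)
The plan is to diagonalize \eqref{reduce} in the basis of eigenvectors of $A(\Phi)$ and then read off the coefficients $c^i_{ij}$ and $\gamma^i_{jk}$ from the explicit eigenframe. First I would compute the seven eigenvalues $\lambda_1,\dots,\lambda_7$ of $A(\Phi)$, which for ideal compressible MHD under planar symmetry are the fast magnetosonic, Alfv\'{e}n and slow magnetosonic speeds (each appearing as $u_1\pm c$) together with the entropy speed $u_1$; the characteristic polynomial factorizes in closed form. I would then construct explicit right and left eigenvectors $r_i, l_i$ with the normalization $l_i\cdot r_j=\delta_{ij}$, and expand $\partial_x\Phi=\sum_j w_j r_j$, so that $w_i:=l_i\cdot\partial_x\Phi$ is the amplitude of the $i$-th wave. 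Since $\partial_t\Phi=-A(\Phi)\partial_x\Phi=-\sum_j\lambda_j w_j r_j$, all time derivatives can be eliminated in favour of quadratic expressions in $\{w_j\}$.

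With the eigenframe in hand, I would derive each of the three families in \eqref{decomposed system} by a direct computation. For $\rho_i=\partial_{z_i}X_i$, differentiating the flow relation $\partial_{s_i}X_i=\lambda_i(\Phi)$ in $z_i$ and using $\partial_{z_i}\Phi=\rho_i\,\partial_x\Phi$ gives
\[
\partial_{s_i}\rho_i=\rho_i\sum_j(\nabla_\Phi\lambda_i\cdot r_j)\,w_j,
\]
and isolating the $j=i$ term as $(\nabla_\Phi\lambda_i\cdot r_i)\,v_i$ matches the first line of \eqref{decomposed system}, with $c^i_{ii}=\nabla_\Phi\lambda_i\cdot r_i$ and $c^i_{im}=\nabla_\Phi\lambda_i\cdot r_m$. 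For $w_i$, applying $\partial_{s_i}=\partial_t+\lambda_i\partial_x$ to $w_i=l_i\cdot\partial_x\Phi$ and commuting derivatives realises the standard Riemann-invariant calculation: after eliminating $\partial_t\Phi$ one obtains a purely quadratic expression in $\{w_j\}$ whose coefficients are bilinear in $l_i, r_j$ and their $\Phi$-gradients, producing all the $\gamma^i_{jk}$. The $v_i$ equation then follows by the product rule from the first two.

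The hard part will be showing that every coefficient $c^i_{ij}, \gamma^i_{jk}$ is of order one, and then identifying the cancellation pattern displayed in \eqref{infmrho}--\eqref{infmv}. The obstacle is that the MHD eigenvectors carry singular prefactors near the transverse-magnetic-field axis and near the Alfv\'{e}n/magnetosonic degeneracies, so one must verify that in every final bilinear combination the singular factors cancel exactly. I would carry this out by an explicit case-by-case algebraic computation (Section \ref{structure} and Appendix \ref{appendixB}), organised by the three $\pm$ wave pairs plus the entropy channel. The canceled terms $\cancel{v_2}, \cancel{v_4}, \cancel{v_6}$ would follow from $\nabla_\Phi\lambda_i\cdot r_i=0$ on the linearly degenerate Alfv\'{e}n and entropy characteristics; the canceled cross-terms $\cancel{\rho_3 w_2}$ and $\cancel{\rho_6 w_5}$ would reduce to specific algebraic identities $\nabla_\Phi\lambda_i\cdot r_j=0$ for the appropriate paired $(i,j)$, verified by direct substitution of the explicit eigenvectors. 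The strong-interaction boxes $\boxed{\rho_2 w_3}, \boxed{\rho_5 w_6}, \boxed{w_2 w_3}, \boxed{w_5 w_6}$ would correspond to the non-vanishing couplings within each paired Alfv\'{e}n/magnetosonic family. Finally, for the double-boxed weak interactions $\dboxed{(\lambda_2-\lambda_3)w_2 w_3}$ and $\dboxed{(\lambda_5-\lambda_6)w_5 w_6}$, the key identity to establish is that the corresponding $\gamma$-coefficients factor through $\lambda_i-\lambda_j$, so the interaction degenerates exactly when the paired speeds coincide; this factorisation is the structural gain that powers the shock-formation analysis later in the paper. Once these coefficient computations are in place, substitution into \eqref{decomposed system} delivers the displays \eqref{infmrho}, \eqref{wsketch1} and \eqref{infmv}.
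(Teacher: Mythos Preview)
Your proposal is correct and follows essentially the same route as the paper: derive \eqref{decomposed system} by John's wave-decomposition calculus, then verify the $O(1)$ coefficient bounds and the cancellation pattern by explicit case-by-case computation in the eigenframe; the linearly degenerate identities $c^2_{22}=c^4_{44}=c^6_{66}=0$ and the built-in factor $(\lambda_k-\lambda_m)$ in $\gamma^i_{km}$ are exactly what produce the crossed-out and double-boxed terms, respectively.

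One point the paper stresses that you underweight: the cancellation of the singular $1/(H_2^2+H_3^2)$ factors in the $\gamma^i_{km}$ is \emph{not} automatic for an arbitrary eigenvector normalization. The paper exhibits an alternative, equally ``explicit'' choice $\tilde r_2,\tilde r_6$ for which $\gamma^2_{64}$ acquires a genuinely singular $H_2/H_3$ factor, whereas the paper's designed normalization \eqref{regv} yields $\gamma^2_{64}=-(\lambda_6-\lambda_4)/(4\gamma)$. So ``construct explicit right and left eigenvectors'' must be read as \emph{design} the normalization of $r_i$ so that every $l_i\cdot(\nabla_\Phi r_k\cdot r_m)$ pairs the $1/(H_2^2+H_3^2)$ in $l_i$ against a compensating $H_2^2+H_3^2$ (or zero) numerator; this is the content of Proposition~\ref{coeff} and the 18-page Appendix~\ref{appendixB}, not a routine verification.
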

\begin{remark}[\bf Potentially harmful terms]
In \eqref{wsketch1}-\eqref{infmv}, some potentially harmful terms are present. We list them on the right hand side of the following equations
\begin{align}
&\partial_{s_2}w_2=\boxed{(\gamma_{23}^2-c_{23}^2)w_2w_3}+\cdots,\label{abw2}\\
&\partial_{s_6}w_6=\boxed{(\gamma_{65}^6-c_{65}^6)w_5w_6}+\cdots,\label{abw6}\\
&\partial_{s_2}\rho_2=\boxed{c_{23}^2\rho_2w_3}+\cdots,\label{abr2}\\
&\partial_{s_6}\rho_6=\boxed{c_{65}^6\rho_6w_5}+\cdots.\label{abr6}
\end{align}
These potentially harmful terms could result in strong nonlinear interactions between the two families of waves ($w_2$ and $w_3$, $w_5$ and $w_6$) with almost-the-same speeds. To handle these nonlinear terms is the main task of Section \ref{pfthm1.2}. Compared with elastic waves in \cite{an}, there the coefficients of these potentially harmful nonlinear terms have additional smallness, which makes the interactions weaker (and thus easier to control) than those in MHD.
\end{remark}
\begin{remark}
In Theorem \ref{structurethm}, our conclusion that all $c^i_{ii}(\Phi),c^i_{im}(\Phi),\gamma^i_{im}(\Phi),\gamma^i_{km}(\Phi)$ are of order $1$ is highly non-trivial. This is true because we carefully design the wave decomposition in this paper and \underline{all} the potentially singular factors in $c^i_{ii}(\Phi),c^i_{im}(\Phi),\gamma^i_{im}(\Phi),\gamma^i_{km}(\Phi)$ are \underline{cancelled}. To prove this, we conduct a 17-page (mainly in the appendix) calculation for 299 coefficients. See Proposition \ref{coeff} in Section \ref{structure} and Appendix \ref{appendixB} for the crucial discussions.
\end{remark}
John \cite{john74} proved the shock formation for $m\times m$ genuinely non-linear strictly hyperbolic system via introducing the decomposition of waves. For general non-genuinely nonlinear and non-strictly hyperbolic system, shock formation was addressed as an open question by Majda in \cite{majda}. In \cite{liu}, Liu generalized John's result to strictly hyperbolic system allowing certain characteristics to be linearly degenerate\footnote{For linearly degenerate system, we refer to \cite{Zhou} for low-regularity local well-posedness.}. There Liu imposed structural conditions that the coefficients of the decomposed system satisfy $\gamma^i_{jk}\equiv 0$ when $c^i_{ii}\neq 0$ and $c^j_{jj}=c^k_{kk}\equiv 0$.

Based on John's method, we are developing a new approach to study non-genuinely nonlinear, non-strictly hyperbolic systems, where all the characteristic waves could interact with each other, i.e., all the coefficients $\gamma^i_{jk}$ are allowed to be non-zero. In particular, our approach extends \cite{john74,liu} and applies to the elastic waves equations in \cite{an} and the physical MHD system in this paper.

We are now in a good position to answer Majda's question in \cite{majda} with the following result: we construct examples of planar symmetric shock formation for 3D ideal compressible MHD equations allowing the presence of nonlinear interactions of all multi-speed waves.
\begin{thm}[{\bf Shock formation for MHD}] \label{shock}
For the 3D ideal compressible MHD system, there exists a large class of planar symmetric compactly supported smooth Cauchy initial data that lead to finite-time shock formations.

More precisely, let $\varepsilon\in(0,\frac1{100})$ be a fixed parameter. There is a sufficiently small constant $\theta_0>0$ such that for any $0<\theta<\theta_0$, if the initial data of the decomposed system \eqref{decomposed system} verify $w_1(x,0)=O(\theta)$ and $w_i(x)=O(\theta^2)$ with $i\neq 1$, then the corresponding solution to \eqref{decomposed system} satisfies the following statements:
\begin{itemize}
\item[\textup{i)}] {\bf(Shock formation along characteristics)} There exists a $z_0$ lying in the support of the initial data and a corresponding finite time $T^*(z_0)$, such that, a shock forms at time $T^*(z_0)$, i.e.,
\begin{equation*}
  \lim_{t\rightarrow T^*(z_0)} \rho_1(z_0,t)=0\quad \text{and}\quad \lim_{t\rightarrow T^*(z_0)} w_1(z_0,t)=\infty.
\end{equation*}
The blow-up of $w_1$ implies $\lim_{t\rightarrow T^*(z_0)}|\partial_x \Phi|(z_0,t)=\infty$. In particular, we show the precise upper and lower bound estimates for $\rho_1(z_0,t)$ as below:
\begin{equation}
	\begin{split}
	\rho_1(z_0,t)\leq (1+\varepsilon)\Big(1-(1-\varepsilon)^4t|c_{11}^1(0)|w_1(z_0,0)\Big),\\
	\rho_1(z_0,t)\geq 	(1-\varepsilon)\Big(1-(1+\varepsilon)^3t|c_{11}^1(0)|w_1(z_0,0)\Big),
		\end{split}
\end{equation}
where $c_{11}^1(0)$ is a negative constant. The lifespan $T^*(z_0)$ of $w_1$ then satisfies
\begin{equation}
 \frac{1}{(1+\varepsilon)^3|c_{11}^1(0)|w_1(z_0,0)}\leq T^*(z_0)\leq\frac{1}{(1-\varepsilon)^3|c_{11}^1(0)|w_1(z_0,0)}.
\end{equation}

\item[\textup{ii)}] {\bf(Lower bound of other $\rho_i$)} For any $i\in\{2,\cdots,7\}$, we have $w_i(z_i,t)=O(\theta)$ and $\rho_i(z_i,t)>C$ with $C$ being a uniform positive constant for all $z_i\in\mathbb{R}$ and any $t<T^*(z_0)$.

\item[\textup{iii)}] {\bf(Boundedness of $\Phi$)} The original variable $\Phi$ is uniformly bounded. We have $\Phi(z_i,t)=O(\theta)$ for all $z_i\in\mathbb{R}$ and any $t<T^*(z_0)$.

\item[\textup{iv)}] {\bf(Boundedness of $\rho_i$ and $v_i$)} For any $i\in\{1,\cdots,7\}$, we have $\rho_i(z_i,t)=O(1)$ and $v_i(z_i,t)=O(\theta)$ for all $z_i\in\mathbb{R}$ and any $t<T^*(z_0)$.

\end{itemize}
\end{thm}
\begin{remark} \label{rkshock}
For the decomposed system \eqref{decomposed system}, to establish ill-posedness, we prescribe initial data which are compactly supported on $[\eta,2\eta]$ with $\eta>0$. Then, along different families of characteristics, there are seven strips $\mathcal{R}_{i}$, $i=1\cdots 7$ issuing from the support $[\eta,2\eta]$. To conduct our analysis, we use the following picture. The shock is constructed to happen at $t=T_\eta^*$ in the first characteristic strip $\mathcal{R}_1$ with the fastest propagation wave speed. For $t<T_\eta^*$, there is \underline{no} other singular point in the spacetime region and the solution is smooth. See Figure \ref{fig1}.
\end{remark}
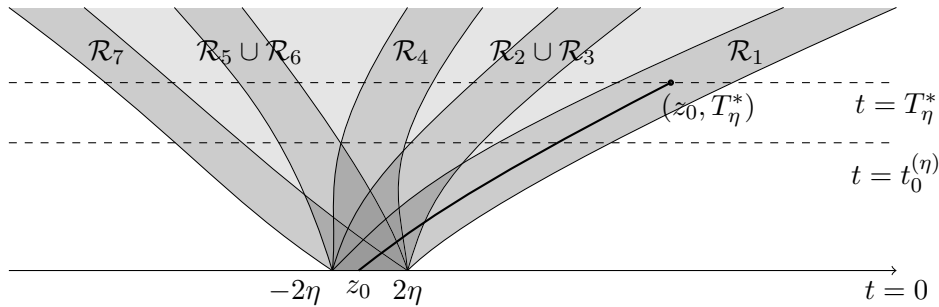
\begin{figure}[htb]
\centering
\begin{tikzpicture}[fill opacity=0.5, draw opacity=1, text opacity=1]
\node [below]at(3.5,0){$2\eta$};
\node [below]at(2.3,0){$\eta$};

\filldraw[white, fill=gray!40] (3.5,0)..controls (2,1) and (1,2)..(-0.8,3.5)--(0.4,3.5)..controls (1,2.8) and (1.9,2)..(2.5,0);
\filldraw[white, fill=gray!40](3.5,0)..controls (3.2,1) and (2,2.6)..(1.3,3.5)--(3.5,3.5)..controls (2.2,1.5) and (2.6,1)..(2.5,0);
\filldraw[white, fill=gray!40](3.5,0)..controls (3.4,1) and (3,1.5)..(4.5,3.5)--(5.5,3.5)..controls (3.2,1.5) and (2.8,1)..(2.5,0);
\filldraw[white, fill=gray!40](3.5,0)..controls (3.8,1) and (4,1.5)..(6.6,3.5)--(8.5,3.5)..controls (3.9,1.5) and (3.5,1)..(2.5,0);

\filldraw[white, fill=gray!80] (2.5,0)..controls (3.5,1) and (3.9,1.5)..(8.5,3.5)--(10,3.5)..controls (5.5,1.5) and (4,0.5)..(3.5,0)--(2.5,0);
\filldraw[white,fill=gray!80](2.5,0)..controls (2.8,1) and (3.2,1.5)..(5.5,3.5)--(6.6,3.5)..controls (4,1.5) and (3.8,1)..(3.5,0);
\filldraw[white,fill=gray!80](2.5,0)..controls (2.6,1) and (2.2,1.5)..(3.5,3.5)--(4.5,3.5)..controls (3,1.5) and (3.4,1)..(3.5,0);
\filldraw[white,fill=gray!80](2.5,0)..controls (1.9,2) and (1,2.8)..(0.4,3.5)--(1.3,3.5)..controls (2,2.6) and (3.2,1)..(3.5,0);
\filldraw[white,fill=gray!80](2.5,0)..controls (1,1) and (0.5,1.8)..(-1.8,3.5)--(-0.8,3.5)..controls (1,2) and (2,1)..(3.5,0);
\draw[->](-1.8,0)--(10,0)node[left,below]{$t=0$};
\draw[dashed](-1.8,1.7)--(10,1.7)node[right,below]{$t=t_0^{(\eta)}$};
\draw[dashed](-1.8,2.5)--(10,2.5)node[right,below]{$t=T_\eta^*$};

\filldraw [thick,black] (7,2.5) circle [radius=0.8pt];
\draw [thick,color=black](2.85,0)..controls (3.5,0.5) and (4.1,1)..(7,2.5);
\node [below]at(2.85,0){$z_0$};
\node [below]at(7.5,2.5){$(z_0,T_\eta^*)$};

\draw (3.5,0)..controls (4,0.5) and (5.5,1.5)..(10,3.5);

\draw (2.5,0)..controls (3.5,1) and (3.9,1.5)..(8.5,3.5);
\node [below] at(8,3.2){$\mathcal{R}_1$};

\draw [color=black](3.5,0)..controls (3.8,1) and (4,1.5)..(6.6,3.5);

\draw [color=black](2.5,0)..controls (2.8,1) and (3.2,1.5)..(5.5,3.5);
\node [below] at(5.3,3.2){$\mathcal{R}_2\cup\mathcal{R}_3$};

\draw [color=black](3.5,0)..controls (3.4,1) and (3,1.5)..(4.5,3.5);

\draw [color=black](2.5,0)..controls (2.6,1) and (2.2,1.5)..(3.5,3.5);
\node [below] at(3.55,3.2){$\mathcal{R}_4$};

\draw [color=black](3.5,0)..controls (3.2,1) and (2,2.6)..(1.3,3.5);
\node [below] at(1.4,3.2){$\mathcal{R}_5\cup\mathcal{R}_6$};

\draw [color=black] (2.5,0)..controls (1.9,2) and (1,2.8)..(0.4,3.5);

\draw [color=black](3.5,0)..controls (2,1) and (1,2)..(-0.8,3.5);
\node [below] at(-0.5,3.2){$\mathcal{R}_{7}$};

\draw [color=black] (2.5,0)..controls (1,1) and (0.5,1.8)..(-1.8,3.5);
\end{tikzpicture}
\caption{\small{\bf First shock forms at $(z_0,T_\eta^*)$ in the characteristic strip $\mathcal{R}_1$.} In this picture, the domain of our consideration is divided into three regions: (a) The grey region is called the characteristic strips. (b) The light grey region denotes the \underline{disjoint} domain between two separate strips. (c) The dark grey region denotes the domain where the strips overlap with each other. }
\label{fig1}
\end{figure}
\begin{remark}
We design the above strips $\{\mathcal{R}_1,\mathcal{R}_2\cup\mathcal{R}_3,\mathcal{R}_4,\mathcal{R}_5\cup\mathcal{R}_6,\mathcal{R}_{7}\}$ and regions with colors to conduct our analysis. We remark that the solution is not necessarily being zero outside of the characteristic strips. In particular, the solutions' dynamics in the light-grey region of Figure \ref{fig1} is far from being trivial. For our proof, it is crucial to investigate the different behaviours of the solutions in and out of these characteristic strips. We will derive a priori estimates for solutions in these colored regions in Subsection \ref{apes}.
\end{remark}
\begin{remark}
Our proof of shock formation here relies on combining the algebraic wave decomposition and a geometric understanding. In particular, we employ the geometric quantity--the inverse density of characteristics $\rho_i$ to quantify the compression of the characteristics. The vanishing of $\rho_i$ characterizes the shock formation. We prove that $\rho_1\to 0$ within $\mathcal{R}_1$. And all the other $\rho_i$ are bounded from below by a uniform positive constant. Our proof here is \underline{not} a qualitative proof by contradiction. By investigating the dynamics of $\rho_i$, we understand the shock formation and its associated precise quantitative behaviours.
\end{remark}
\begin{remark}
		Recall that, under plane symmetry, the parameter $H_1$ is a constant satisfying condition \eqref{h10}. Note that in \eqref{h10}, the smallness of $H_1$ is only imposed to ensure the strict separation of two characteristic speeds (see Section \ref{pre} for the details). This parameter is not relevant to the shock formation argument of Theorem \ref{shock}. 
\end{remark}

Moreover, based on our constructed shock formation, we further provide a counterexample to $H^2$ local well-posedness. It is caused by the shock formation. The ill-posedness theorem is stated as below.
\begin{thm}[{\bf $H^2$ ill-posedness for MHD}] \label{3D}
The Cauchy problems of the 3D ideal compressible MHD equations \eqref{MHD} are ill-posed in $H^2(\mathbb{R}^3)$ in the following sense:

There exists a family of compactly supported, smooth initial data satisfying
$${\|\Phi^{(\eta)}(\vec{x},0)\|}_{{{H}}^2(\mathbb{R}^3)} \to 0\quad \text{as} \quad \eta\to0,$$
where $\eta>0$ is a small parameter and it identifies the datum in this family. For each $\eta$, the Cauchy problem of the 3D ideal MHD \eqref{reduce} admits a solution that ceases to be regular in finite time. Let $T_\eta^*$ be the largest time such that the solution $\Phi^{(\eta)}\in C^\infty\big(\mathbb{R}^3\times[0,T_\eta^*) \big)$.

  Then, for each solution $\Phi^{(\eta)}$, evolving from the corresponding initial datum, a shock forms at $T_\eta^*$. Let $\Omega_{T_\eta^*}$ be a spatial neighborhood of the first (shock) singularity, the $H^1$-norm $\|\Phi^{(\eta)}\|_{H^1(\Omega_{T_\eta^*})}$ blows up at $T^*_{\eta}$. In particular, for any $\eta$, we have
\begin{equation} \label{h1blowup}
  \|\partial_x u_1^{(\eta)}(\cdot,T_\eta^*)\|_{L^2(\Omega_{T_\eta^*})}=+\infty,\ \|\partial_x \varrho^{(\eta)}(\cdot,T_\eta^*)\|_{L^2(\Omega_{T_\eta^*})}=+\infty.
\end{equation}

\end{thm}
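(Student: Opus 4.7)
The plan is to derive Theorem \ref{3D} directly from Theorem \ref{shock} via a rescaling construction combined with finite propagation speed. Starting from a compactly supported smooth planar symmetric datum $\Phi_0$ in the class covered by Theorem \ref{shock}, I would build the family $\Phi_0^{(\eta)}$ on $\mathbb{R}^3$ through a combined amplitude-and-scale rescaling in the longitudinal $x_1$ direction, lifted to three dimensions by a smooth cutoff in the transverse variables:
\begin{equation*}
\Phi_0^{(\eta)}(x_1, x_2, x_3) = \theta_\eta\, \Phi_0(x_1/\eta)\, \chi_\eta(x_2, x_3),
\end{equation*}
with the amplitude $\theta_\eta > 0$ and cutoff $\chi_\eta \in C_c^\infty(\mathbb{R}^2)$ tuned so that $\|\Phi_0^{(\eta)}\|_{\dot H^2(\mathbb{R}^3)} \to 0$ while simultaneously $\theta_\eta/\eta \to \infty$ (the latter forcing $w_1(z_0, 0) = l_1 \cdot \partial_x \Phi_0^{(\eta)}(z_0)$ to tend to infinity).

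Applied to the rescaled 1D planar symmetric datum $\theta_\eta \Phi_0(\cdot/\eta)$, Theorem \ref{shock} produces a shock for the reduced system \eqref{reduce} at time $T^*_\eta \sim \eta/(|c_{11}^1(0)|\, \theta_\eta) \to 0$. To transfer this shock to the full 3D MHD system \eqref{MHD}, I would invoke finite propagation speed: letting $c_{\max}$ denote a uniform upper bound on the characteristic speeds of \eqref{MHD} on the amplitude range considered, and choosing $\chi_\eta \equiv 1$ on a ball of radius exceeding $c_{\max}\, T^*_\eta$, a domain-of-dependence argument shows the 3D solution coincides with the 1D planar symmetric one inside a spacetime cone containing $(z_0, T^*_\eta)$. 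Instantaneous shock formation, claim (i), then follows.

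For the $L^2$ blow-up in claim (ii), I would use the shock structure of Theorem \ref{shock} directly: as $t \to T^*_\eta$ the inverse characteristic density $\rho_1(z_0, t)$ vanishes linearly while $v_1 = \rho_1 w_1$ remains of order $\theta_\eta$. Converting the Eulerian $L^2$ integral of $w_1$ to the Lagrangian characteristic coordinate via $dx_1 = \rho_1\, dz_1$ yields
\begin{equation*}
\int_{\Omega_{T^*_\eta}} w_1^2(x_1, t)\, dx_1 \;=\; \int w_1^2\, \rho_1\, dz_1 \;=\; \int \frac{v_1^2}{\rho_1}\, dz_1 \;\longrightarrow\; +\infty \quad \text{as } t \to T^*_\eta,
\end{equation*}
because the integrand fails to be integrable at $z_1 = z_0$. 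Since $w_1 = l_1 \cdot \partial_x \Phi$ is a non-degenerate linear combination of $\partial_x u_1$, $\partial_x \varrho$, and the other components of $\partial_x \Phi$, and since by Theorem \ref{shock} (ii) the remaining Riemann variables $w_2, \ldots, w_7$ stay bounded of order $\theta_\eta$, the blow-up transfers individually to $\partial_x u_1(\cdot, T^*_\eta)$ and $\partial_x \varrho(\cdot, T^*_\eta)$ in $L^2(\Omega_{T^*_\eta})$, yielding \eqref{h1blowup}.

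The main obstacle is the delicate balancing act in the first step. The space $\dot H^2(\mathbb{R}^3)$ is supercritical with respect to the scaling-critical regularity $\dot H^{3/2}$ of the first-order hyperbolic system \eqref{MHD}, so under the naive self-similar rescaling $\Phi \mapsto \Phi(\cdot/\eta)$ the $\dot H^2$ norm moves in the opposite direction to what is needed. The resolution is a multi-parameter optimization of $\theta_\eta$, of the longitudinal scale $\eta$, and of the transverse cutoff $\chi_\eta$, subject both to the smallness constraint $\theta_\eta < \theta_0$ required to invoke Theorem \ref{shock} and to the compatibility constraint $\mathrm{supp}\,\chi_\eta \supset B_{c_{\max} T^*_\eta}(0)$ imposed by finite propagation. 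Executing this balance is the technical core of the section, and parallels the analogous $H^3$ construction for 3D elastic waves carried out in \cite{an}.
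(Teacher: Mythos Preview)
Your proposal has two genuine gaps.

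\textbf{The rescaling data cannot be balanced.} You correctly identify that $\dot H^2(\mathbb{R}^3)$ is supercritical, but the multi-parameter optimization you describe cannot succeed with a pure rescaling profile. With $\Phi_0^{(\eta)} = \theta_\eta \Phi_0(x_1/\eta)\chi_\eta(x_2,x_3)$ and $\chi_\eta\equiv 1$ on a transverse ball of radius $R_\eta$, one has $W_0^{(\eta)}\sim \theta_\eta/\eta$, hence $T_\eta^*\sim \eta/\theta_\eta$, while
\[
\|\partial_{x_1}^2\Phi_0^{(\eta)}\|_{L^2(\mathbb{R}^3)}^2 \;\sim\; \frac{\theta_\eta^2}{\eta^4}\cdot \eta\cdot R_\eta^2 \;=\; \frac{\theta_\eta^2 R_\eta^2}{\eta^3}.
\]
The finite-propagation constraint $R_\eta\gtrsim c_{\max}T_\eta^*\sim \eta/\theta_\eta$ then forces $\|\partial_{x_1}^2\Phi_0^{(\eta)}\|_{L^2}^2\gtrsim 1/\eta\to\infty$, so the $\dot H^2$ norm diverges rather than vanishing; no choice of $\theta_\eta$ or $R_\eta$ rescues this. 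The paper escapes the obstruction not by optimizing a rescaling but by using a \emph{logarithmic} profile,
\[
w_1^{(\eta)}(z,0)=\theta\int_{\mathbb{R}}\zeta_{\eta/10}(y)\,|\ln(z-y)|^\alpha\,\chi(z-y)\,dy,\qquad 0<\alpha<\tfrac12,
\]
(Subsection \ref{illdata}). This gives $W_0^{(\eta)}\sim\theta|\ln\eta|^\alpha\to\infty$ only logarithmically, slow enough that $\|\hat w_1\|_{H^1(\mathbb{R}^3)}\lesssim\theta\sqrt{\eta}\,|\ln\eta|^\alpha\to 0$ (Lemma \ref{alem1}). The log is not cosmetic; it is the device that beats the supercritical scaling, exactly as in Lindblad's original counterexamples.

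\textbf{The $L^2$ blow-up is not justified.} You assert that $\int v_1^2/\rho_1\,dz_1=\infty$ ``because the integrand fails to be integrable at $z_1=z_0$'', but the vanishing $\rho_1(z_0,T_\eta^*)=0$ alone does not imply this: if, say, $\rho_1(z,T_\eta^*)\sim|z-z_0|^{1/2}$ near $z_0$ then $1/\rho_1$ is integrable. What is needed is a linear upper bound $\rho_1(z,T_\eta^*)\lesssim |z-z_0|$, i.e., a uniform bound on $|\partial_{z_1}\rho_1|$. The paper devotes Subsection \ref{supbound} (Proposition \ref{dzrho1}) to this non-trivial estimate, obtained by commuting $\partial_{z_1}$ through the transport equations for $\rho_1$ and $v_1$ in bi-characteristic coordinates and closing a Gr\"onwall argument for the pair $(\partial_{y_1}\rho_1,\partial_{y_1}v_1)$. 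Only with that bound does the key computation
\[
\|w_1\|_{L^2(\Omega_{T_\eta^*})}^2 \;\gtrsim\; \int_{z_0}^{z_0^*}\frac{dz}{\rho_1(z,T_\eta^*)-\rho_1(z_0,T_\eta^*)} \;\geq\; \int_{z_0}^{z_0^*}\frac{dz}{(z-z_0)\sup|\partial_z\rho_1|} \;=\;+\infty
\]
go through. This step is the analytical heart of claim (ii) and is entirely missing from your outline.
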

\begin{remark} \label{stronger inflation}
    The $H^1$ blow-up mechanism in \eqref{h1blowup} is \emph{stronger} than the usual norm inflation: for each fixed $\eta$, the $H^1$-norm of the solution becomes infinite at a finite time $T_\eta^*$, rather than merely growing arbitrarily large (inflating) as $\eta \to 0$.
\end{remark}
\begin{remark} \label{rhoill}
  Note that the $L^\infty$ blow-up of the solutions' first derivatives in Theorem \ref{shock} is not adequate to imply the blow-up of the $H^1$ norm as in \eqref{h1blowup}. To prove \eqref{h1blowup}, we crucially employ the \underline{vanishing of the inverse foliation density $\rho$}, which is the geometric description of the shock formation. Our exploration of the characteristics' geometric behaviours via $\rho$ bridges the gap between the shock formation (pointwise blow-up of the first derivatives) and the blow-up of $H^1$-norms. See Subsection \ref{ill} for the details.
\end{remark}
\begin{remark} \label{new}
For 3D ideal compressible MHD (\ref{MHD}), both our examples of the shock formation allowing the presence of waves with multiple travelling speeds and the associated low-regularity ill-posedness are \underline{new}. With the subtle structures we found, for our constructed solutions, we give a \underline{complete} description of the MHD dynamics up to the time $T^*_{\eta}$ when the shock singularity happens.
\end{remark}
\begin{remark} \label{nonsym data}
To prove the $H^2$ ill-posedness, we employ our 1+1 dimensional shock-formation result in Theorem \ref{shock} within $\mathcal{D}_\eta$, that is the development region of an initial small ball $B_{\frac\eta2}^3$ at $t=0$ along the characteristics. In particular, the initial data we prescribe are planar symmetric inside $B_{\frac\eta2}^3$. Due to finite speed of propagation, the solutions to the 3D MHD system still obey plane symmetry in $\mathcal{D}_\eta$, which allows for the application of our Theorem \ref{shock}. It is worthwhile to note that the initial data outside of $B_{\frac\eta2}^3$ are extended to be compactly supported, and hence depend on $x_2,x_3$ as well. Therefore, the overall initial data are \emph{not planar symmetric} on the entire $\mathbb{R}^3$. See Subsection \ref{illdata} for further discussions.
\end{remark}
If the magnetic field vanishes, i.e., $H_1=H_2=H_3\equiv0$, our system \eqref{MHD} reduces to the 3D compressible Euler equations. Our results thus give
\begin{thm}[{\bf $H^2$ ill-posedness for Euler equations}] \label{sharpeuler}
Consider the following 3D compressible Euler equations allowing non-trivial entropy and vorticity
\begin{equation}\label{euler}
\left\{\begin{split}
&\partial_t\varrho+\nabla\cdot(\varrho u)=0,\\
&\varrho\{\partial_t+(u\cdot\nabla)\}u+\nabla p=0,\\
&\partial_t S+(u\cdot\nabla)S=0.
\end{split}
\right.
\end{equation}
Then the Cauchy problem of \eqref{euler} is $H^2$ ill-posed with respect to the fluid velocity and density. More precisely, there exits a family of compactly supported smooth initial data for \eqref{euler} satisfying
$${\|\varrho_0^{(\eta)}\|}_{{{H}}^2(\mathbb{R}^3)}+{\|u_0^{(\eta)}\|}_{{{H}}^2(\mathbb{R}^3)}+{\|S_0^{(\eta)}\|}_{{{H}}^2(\mathbb{R}^3)} \to 0\quad \text{as} \quad \eta\to0.$$
After a finite time $T_\eta^*$ a shock forms and the solution ceases to be smooth. Furthermore, tied to this family of initial data, the $H^1$-norm of the velocity $u_1$ and density $\varrho$ blow up at the shock formation time $T^*_{\eta}$:
\begin{equation*}
  \|u_{1}^{(\eta)}(\cdot,T_\eta^*)\|_{H^1(\Omega_{T_\eta^*})}=+\infty,\quad \|\varrho^{(\eta)}(\cdot,T_\eta^*)\|_{H^1(\Omega_{T_\eta^*})}=+\infty
\end{equation*}
where $\Omega_{T_\eta^*}$ is a spatial region around the first (shock) singularity.
\end{thm}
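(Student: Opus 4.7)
The plan is to deduce Theorem~\ref{sharpeuler} by specializing the MHD analysis developed for Theorems~\ref{structurethm}--\ref{3D} to the magnetic-field-free limit $H_1=H_2=H_3\equiv 0$. In this limit, the $7\times 7$ matrix $A(\Phi)$ in~\eqref{reduce} degenerates: the Alfv\'en eigenvalues and the slow magnetosonic eigenvalues collapse onto the entropy speed $u_1$, leaving only the fast acoustic characteristics $u_1\pm c$ as genuinely nonlinear modes. The first step is to verify that the algebraic decomposition of Theorem~\ref{structurethm} survives this degeneration. In particular, the potentially harmful strong-interaction terms in~\eqref{abw2}--\eqref{abr6} arise precisely from the Alfv\'en/slow-mode coupling and vanish identically when $H\equiv 0$, so the reduced system for $(u_1,u_2,u_3,\varrho-1,S)$ is essentially a sub-block of~\eqref{decomposed system} with the delicate cross-family interactions removed. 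Crucially, the Riccati driver $\partial_{s_1}w_1=-c_{11}^1(\Phi)w_1^2+\cdots$ for the fastest acoustic mode is unchanged, and a direct calculation using the polytropic law~\eqref{state} shows $c_{11}^1(0)\neq 0$ whenever $\gamma>1$.

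With the reduction in hand, Theorem~\ref{shock} applies verbatim and supplies, for each small $\theta>0$, a family of planar-symmetric shock-forming profiles parametrized by a support scale $\eta$. To build the 3D family $\Phi_0^{(\eta)}$ of Theorem~\ref{sharpeuler}, I would fix a smooth planar profile $\Phi_\star$ with a well-defined $w_1$-maximum at $z_0$ and set
\[
\Phi_0^{(\eta)}(x) \;=\; \theta(\eta)\,\Phi_\star\!\bigl(x_1/\eta\bigr)\,\chi\!\bigl(x_2/R_\eta\bigr)\,\chi\!\bigl(x_3/R_\eta\bigr),
\]
where $\chi$ is a fixed bump and $\theta(\eta),R_\eta$ are chosen as suitable powers of $\eta$. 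A scaling count gives $\|\Phi_0^{(\eta)}\|_{\dot H^2(\mathbb R^3)}\sim \theta\,R_\eta\,\eta^{-3/2}$, while Theorem~\ref{shock}(i) forces $T_\eta^*\sim \eta/\theta$. The triple $(\theta,R_\eta,\eta)$ is then tuned so that both $\dot H^2$-smallness ($\theta R_\eta\ll\eta^{3/2}$) and instantaneous shock ($\theta\gg\eta$) hold as $\eta\to 0$, while $R_\eta\gtrsim T_\eta^*$ keeps the planar-symmetric analysis valid in a spacetime tube around the shock point via finite propagation speed.

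For the $H^1$ blow-up of $u_1^{(\eta)}$ and $\varrho^{(\eta)}$ at $T_\eta^*$, I would follow the strategy flagged in Remark~\ref{rhoill}: pointwise divergence of first derivatives is not enough, but the vanishing of the inverse density $\rho_1$ is. Passing to the characteristic coordinates $(z_1,s_1)$ of the fastest acoustic family, so that $x=X_1(z_1,t)$ has Jacobian $\partial_{z_1}X_1=\rho_1$, the change-of-variables formula yields
\[
\|\partial_x u_1^{(\eta)}(\cdot,t)\|_{L^2(\Omega_{T_\eta^*})}^2 \;=\; \int |\partial_{z_1}u_1^{(\eta)}(z_1,t)|^2\,\rho_1^{-1}(z_1,t)\,dz_1.
\]
By Theorem~\ref{shock}(i), $\rho_1(z_0,t)\to 0$ as $t\to T_\eta^*$; a quantitative use of the transport equation for $\rho_1$ in~\eqref{decomposed system}, together with $v_1=\rho_1 w_1$ and the boundedness statements of Theorem~\ref{shock}(ii)--(iv), shows that $\rho_1$ vanishes on a set of positive measure in $z_1$ while $|\partial_{z_1}u_1^{(\eta)}|$ is bounded below on that set. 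Hence the integral diverges. The same argument, applied to $\varrho^{(\eta)}$ which propagates along the same acoustic family, gives the density blow-up.

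The principal obstacle I foresee is the joint tuning of $(\theta,R_\eta,\eta)$: one must secure $\dot H^2$-smallness, instantaneous shock, and a domain of dependence large enough that the planar-symmetric shock mechanism remains applicable at the blow-up point, all simultaneously. Once this scaling is in place, the remaining ingredients---the collapse of~\eqref{decomposed system} to its Euler truncation, the application of Theorem~\ref{shock}, and the characteristic change of variables powering the $H^1$ blow-up---are direct consequences of the machinery already built for the MHD case.
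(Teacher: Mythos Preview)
Your reduction of the MHD framework to the Euler case by setting $H\equiv 0$, and your identification of the surviving Riccati mechanism for $w_1$, are correct and match the paper's Section~\ref{euler ill}. However, two genuine gaps remain.

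\textbf{The initial data construction fails.} Your smooth rescaled profile $\Phi_0^{(\eta)}=\theta\,\Phi_\star(x_1/\eta)\chi(x_2/R_\eta)\chi(x_3/R_\eta)$ cannot meet all three constraints simultaneously. Your own count gives $\|\Phi_0^{(\eta)}\|_{\dot H^2}\sim\theta R_\eta\eta^{-3/2}$ and $T_\eta^*\sim\eta/\theta$, while finite propagation speed forces $R_\eta\gtrsim T_\eta^*\sim\eta/\theta$. Substituting,
\[
\|\Phi_0^{(\eta)}\|_{\dot H^2}\;\gtrsim\;\theta\cdot\frac{\eta}{\theta}\cdot\eta^{-3/2}\;=\;\eta^{-1/2}\;\longrightarrow\;\infty,
\]
so the $\dot H^2$ norm diverges rather than vanishes, and no tuning of $(\theta,R_\eta,\eta)$ rescues this. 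The paper instead follows Lindblad and takes the logarithmic datum~\eqref{data}, namely $w_1^{(\eta)}(z,0)\sim\theta|\ln\eta|^\alpha$ with $0<\alpha<\tfrac12$, supported at scale $\eta$. This yields $W_0^{(\eta)}\sim\theta|\ln\eta|^\alpha\to\infty$ (hence $T_\eta^*\to 0$) while the estimate of Lemma~\ref{alem1} gives $\|\hat w_1\|_{H^1(\mathbb R^3)}\lesssim\theta\sqrt\eta\,|\ln\eta|^\alpha\to 0$. The logarithm is precisely the mechanism that decouples the growth rate of $W_0$ from the Sobolev norm; a fixed smooth profile cannot achieve this separation of scales.

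\textbf{The $H^1$ blow-up argument is incorrect as stated.} At the first shock time, $\rho_1(\cdot,T_\eta^*)$ does not vanish on a set of positive $z_1$-measure; it vanishes only at the single point $z_0$ where $w_1(\cdot,0)$ is maximal. To force $\int\rho_1^{-1}\,dz_1=+\infty$ you need an additional ingredient: a uniform bound $|\partial_{z_1}\rho_1|\leq C_\eta$, which the paper proves separately in Proposition~\ref{dzrho1} by deriving a linear ODE system for $(\partial_{y_1}\rho_1,\partial_{y_1}v_1)$ in bi-characteristic coordinates and applying Gr\"onwall. That bound gives $\rho_1(z,T_\eta^*)\leq C_\eta|z-z_0|$ near $z_0$, and then~\eqref{normblowup} produces the logarithmic divergence $\int_{z_0}^{z_0^*}(z-z_0)^{-1}dz=+\infty$. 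This estimate is not a consequence of Theorem~\ref{shock}(ii)--(iv) and must be supplied on its own.
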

\begin{remark}
The above ill-posedness result for the 3D compressible Euler equations is sharp with respect to the regularity of the fluid velocity $u$ and density $\varrho$. The classical local well-posedness (LWP) result for the 3D compressible Euler equations holds in $H^s$ with $s>\frac52$ for initial data of $u$ and $\varrho$. The regularity for local well-posedness is lowered to $s>2$ for fluid velocity and density by Disconzi-Luo-Mazzone-Speck \cite{Disconzi}, Wang \cite{Wang19} and Zhang-Andersson \cite{zhang-andersson}. Our ill-posedness result thus acts as a sharp counterpart of these local well-posedness results. For the incompressible case, Bourgain-Li \cite{bourgain-li} proved the sharp low-regularity ill-posedness result with respect to the regularity of the vorticity $\omega$. Their initial data require fluid velocity $u$ and vorticity $\omega$ satisfying $(u,\omega) \in H^\frac52 \times H^{\frac32}$. With the picture below, one can see that our Theorem \ref{sharpeuler} serves as a sharp companion with respect to the fluid velocity $u$ and density $\varrho$. See Section \ref{euler ill} for more detailed discussions.
\end{remark}
\begin{figure}[H]
\centering
\begin{tikzpicture}[fill opacity=0.5, draw opacity=1, text opacity=1,scale=1.2]
\filldraw[white, fill=gray!40](1,0.5)--(1,4)--(5,4)--(1.5,0.5)--(1,0.5);
\filldraw[gray!40, fill=gray!80](1,0.5)--(1,1)--(1.5,1)--(1.5,0.5)--(1,0.5);
\draw[->](0,-0.3)--(5.5,-0.3);
\filldraw(5,-0.3)node[below]{\footnotesize $s$: regularity for the velocity};
\draw[->](0,-0.3)--(0,4)node[right,above]{\footnotesize $s'$: regularity for the vorticity};
%\draw[dashed](1,0.5)--(7,0.5);
%\draw[dashed](7,0.5)--(7,4);
\draw(1.5,0.5)--(5,4);
\draw[dashed](1.5,0.5)--(1.5,4);
\draw[dashed](1,1)--(1.5,1);
\draw[dashed](1,4)--(5,4);
\filldraw [thick,black] (1.5,0.5) circle [radius=0.8pt];
\filldraw [thick,black] (1,0.5) circle [radius=0.8pt];
\filldraw [thick,black] (1,-0.3) circle [radius=0.8pt]node[below]{2};
\filldraw [thick,black](1.5,-0.3) circle [radius=0.8pt]node[below]{$\frac{5}{2}$};

\filldraw [thick,black](0,0.5) circle [radius=0.8pt]node[left]{$\frac32$};
\filldraw [thick,black](0,1) circle [radius=0.8pt]node[left]{2};

\draw[thick,black](1,0.5)--(1.5,0.5);
%\draw[dashed](1,1)--(1.5,1);
\draw[very thick,black](1,0.5)--(1,4);
\filldraw(1.65,2.2)circle node[right]{\scriptsize LWP proved by Wang(19') (See also};
\filldraw(1.65,1.9)circle node[right]{\scriptsize Disconzi-Luo-Mazzone-Speck(19'),};
\draw[->](1.75,1.9)--(1.25,1.9);
\filldraw(1.65,1.6)circle node[right]{\scriptsize Zhang-Andersson(21'))};
\filldraw(1.65,3)circle node[right]{\scriptsize Classical LWP};
\filldraw(1.65,0.75)circle node[right]{\scriptsize LWP by Conjecture in Wang(19')};
\filldraw (2,2) circle;% node[left]{D};
\draw[->](1.75,0.75)--(1.25,0.75);
\draw[->](1.75,0.3)--(1.25,0.45);
\filldraw(1.65,0.3)node[right]{\scriptsize Bourgain-Li's ill-posedness(21')};
\filldraw(1.65,0)node[right]{\scriptsize (incompressible 3D Euler)};
\draw[->](-0.1,2.25)--(0.95,2.25);
\filldraw(0,2.5)node[left]{\footnotesize An-Chen-Yin's };
\filldraw(0,2.2)node[left]{\footnotesize ill-posedness};
\end{tikzpicture}
\end{figure}

\subsection{Difficulties and Strategies} \label{difficulty}
To prove the ill-posedness result, as in \cite{an,lindblad17,Lind96,Lind98} we work under planar symmetry, the MHD system then obeys
\begin{equation}\label{main}
\left\{\begin{split}
&\varrho\partial_tu_1+\varrho u_1\partial_x u_1+c^2\partial_x\varrho+\partial_S p\partial_xS+\mu_0H_2\partial_xH_2+\mu_0H_3\partial_xH_3=0,\\
&\varrho\partial_tu_2+\varrho u_1\partial_x u_2-\mu_0H_1\partial_xH_2=0,\\
&\varrho\partial_tu_3+\varrho u_1\partial_x u_3-\mu_0H_1\partial_xH_3=0,\\
&\partial_t\varrho+u_1\partial_x\varrho+\varrho\partial_x u_1=0,\\
&\partial_tH_2+u_1\partial_x H_2-H_1\partial_xu_2+H_2\partial_xu_1=0,\\
&\partial_tH_3+u_1\partial_x H_3-H_1\partial_xu_3+H_3\partial_xu_1=0,\\
&\partial_t S+u_1\partial_xS=0,
\end{split}
\right.
\end{equation}
where $c=\sqrt{\partial_\varrho p}$ denotes the sound speed. For \eqref{main}, by Gauss's Law, $H_1$ is a constant and hence \eqref{main} can be viewed as a $7\times 7$ first-order hyperbolic system:
\begin{equation}\label{1order}
        \partial_t\Phi+A(\Phi)\partial_x\Phi=0,
\end{equation}
where $\Phi=(u_1,u_2,u_3,\varrho-1, H_2,H_3,S)^T$ and $A(\Phi)$ is a $7\times7$ matrix. We then explore the structures of \eqref{1order} and proceed to do analysis, but several difficulties arise:

{\bf $\bullet$ Invalidity of Riemann invariants for a $7\times 7$ system:} For hyperbolic system with different travelling speeds, to prove shock formation, a key ingredient is to understand the nonlinear interaction of waves propagating in different speeds. For a $2\times 2$ first-order hyperbolic system, there exist two Riemann invariants, which can be constructed explicitly and verify two transport equations. These two Riemann invariants carry crucial information about the nonlinear structure of the $2\times 2$ first-order hyperbolic system. And with them other geometric quantities can be solved along the characteristics. However, for a large system, such as our $7\times 7$ system, it is almost impossible to find proper Riemann invariants with explicit formulas. Here we adopt a \underline{different} approach. We appeal to John's classic method \cite{john74}, i.e., decomposition of waves. Applying the following formula of decomposition of waves
\begin{equation*}
  \partial_x\Phi=\sum_{k=1}^7 w_kr_k,
\end{equation*}
we rewrite \eqref{1order} as a \underline{diagonal} Riccati-type system:
\begin{equation}\label{w}
(\partial_t+\lambda_i\partial_x)w_i=-c_{ii}^i(w_i)^2+\Big(\sum_{m\neq i}(-c_{im}^i+\gamma_{im}^i)w_m\Big)w_i+\sum_{m\neq i,k\neq i\atop m\neq k}\gamma_{km}^iw_kw_m,
\end{equation}
where $w_i:=l_i(\Phi)\partial_x\Phi$ and $l_i(\Phi)$ are the left eigenvectors of $A(\Phi)$. Here the coefficients $c_{ii}^i$, $c_{im}^i$, $\gamma_{km}^i$, $\gamma_{im}^i$ are functions of the unknowns $\Phi$. A \underline{fundamental} technical point in our work is to bound these coefficients. This is closely relevant to the choice of right eigenvectors $r_i$.

{\bf $\bullet$ Non-strict hyperbolicity:} For the hyperbolic system \eqref{1order}, via algebraic calculations, we notice that the MHD system (\ref{main}) is \underline{non-strictly} hyperbolic\footnote{For more discussions on the non-strictly hyperbolic system, we refer to Liu-Xin \cite{liu-xin}.}, with eigenvalues of the coefficient matrix $(A(\Phi))_{7\times7}$ satisfying
\begin{align*}
\begin{split}
    \lambda_7(\Phi)<\lambda_6(\Phi)\leq \lambda_5(\Phi)<\lambda_4(\Phi)<\lambda_3(\Phi)\leq\lambda_2(\Phi)<\lambda_1(\Phi).
\end{split}
\end{align*}
When $\Phi$ is sufficiently small, two pairs of characteristics have almost-the-same travelling speed $\lambda_2(\Phi)\thickapprox\lambda_3(\Phi),\ \lambda_5(\Phi)\thickapprox\lambda_6(\Phi).$ This yields the long-time overlapping of the corresponding characteristic strips $\mathcal{R}_2,\mathcal{R}_3$ (or $\mathcal{R}_5,\mathcal{R}_6$) mentioned in Remark \ref{rkshock}. Substantial nonlinear interactions will be carried out in the overlapped regions and they may effect other strips. We overcome this obstacle by taking unions of the overlapped strips and considering the wave propagations in and out of the following five strips:
\begin{equation} \label{strips}
\{\mathcal{R}_1,\mathcal{R}_2\cup\mathcal{R}_3,\mathcal{R}_4,\mathcal{R}_5\cup\mathcal{R}_6,\mathcal{R}_7\}.
  \end{equation}
Tracing the dynamics of MHD (a $7\times 7$ non-strictly hyperbolic system) with analyzing the solutions' behaviours in and out of these five characteristic strips $\{\mathcal{R}_1, \mathcal{R}_2\cup\mathcal{R}_3, \mathcal{R}_4,\mathcal{R}_5\cup\mathcal{R}_6, \mathcal{R}_7\}$ is \underline{new}. These five strips will completely separate from each other when $t>t_0^{(\eta)}$, where $t_0^{(\eta)}$ can be precisely calculated. See the following picture sketching the dynamics:
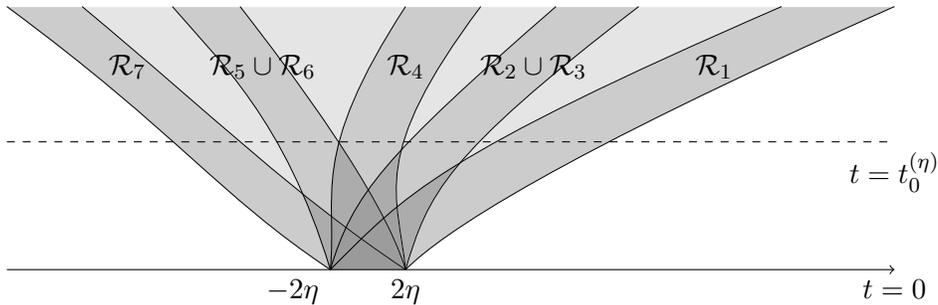
\begin{figure}[htbp]
\centering
\begin{tikzpicture}[fill opacity=0.5, draw opacity=1, text opacity=1]
\node [below]at(3.5,0){$2\eta$};
\node [below]at(2.3,0){$\eta$};

\filldraw[white, fill=gray!40] (3.5,0)..controls (2,1) and (1,2)..(-0.8,3.5)--(0.4,3.5)..controls (1,2.8) and (1.9,2)..(2.5,0);
\filldraw[white, fill=gray!40](3.5,0)..controls (3.2,1) and (2,2.6)..(1.3,3.5)--(3.5,3.5)..controls (2.2,1.5) and (2.6,1)..(2.5,0);
\filldraw[white, fill=gray!40](3.5,0)..controls (3.4,1) and (3,1.5)..(4.5,3.5)--(5.5,3.5)..controls (3.2,1.5) and (2.8,1)..(2.5,0);
\filldraw[white, fill=gray!40](3.5,0)..controls (3.8,1) and (4,1.5)..(6.6,3.5)--(8.5,3.5)..controls (3.9,1.5) and (3.5,1)..(2.5,0);

\filldraw[white, fill=gray!80] (2.5,0)..controls (3.5,1) and (3.9,1.5)..(8.5,3.5)--(10,3.5)..controls (5.5,1.5) and (4,0.5)..(3.5,0)--(2.5,0);%R1
\filldraw[white,fill=gray!80](2.5,0)..controls (2.8,1) and (3.2,1.5)..(5.5,3.5)--(6.6,3.5)..controls (4,1.5) and (3.8,1)..(3.5,0);%R2
\filldraw[white,fill=gray!80](2.5,0)..controls (2.6,1) and (2.2,1.5)..(3.5,3.5)--(4.5,3.5)..controls (3,1.5) and (3.4,1)..(3.5,0);%R4
\filldraw[white,fill=gray!80](2.5,0)..controls (1.9,2) and (1,2.8)..(0.4,3.5)--(1.3,3.5)..controls (2,2.6) and (3.2,1)..(3.5,0);%R5
\filldraw[white,fill=gray!80](2.5,0)..controls (1,1) and (0.5,1.8)..(-1.8,3.5)--(-0.8,3.5)..controls (1,2) and (2,1)..(3.5,0);%R7
\draw[->](-1.8,0)--(10,0)node[left,below]{$t=0$};
\draw[dashed](-1.8,1.7)--(10,1.7)node[right,below]{$t=t_0^{(\eta)}$};

\draw (3.5,0)..controls (4,0.5) and (5.5,1.5)..(10,3.5);

\draw (2.5,0)..controls (3.5,1) and (3.9,1.5)..(8.5,3.5);
\node [below] at(7.6,3){$\mathcal{R}_1$};

\draw [color=black](3.5,0)..controls (3.8,1) and (4,1.5)..(6.6,3.5);

\draw [color=black](2.5,0)..controls (2.8,1) and (3.2,1.5)..(5.5,3.5);
\node [below] at(5.2,3){$\mathcal{R}_2\cup\mathcal{R}_3$};

\draw [color=black](3.5,0)..controls (3.4,1) and (3,1.5)..(4.5,3.5);

\draw [color=black](2.5,0)..controls (2.6,1) and (2.2,1.5)..(3.5,3.5);
\node [below] at(3.5,3){$\mathcal{R}_4$};

\draw [color=black](3.5,0)..controls (3.2,1) and (2,2.6)..(1.3,3.5);
\node [below] at(1.6,3){$\mathcal{R}_5\cup\mathcal{R}_6$};

\draw [color=black] (2.5,0)..controls (1.9,2) and (1,2.8)..(0.4,3.5);

\draw [color=black](3.5,0)..controls (2,1) and (1,2)..(-0.8,3.5);
\node [below] at(-0.2,3){$\mathcal{R}_{7}$};

\draw [color=black] (2.5,0)..controls (1,1) and (0.5,1.8)..(-1.8,3.5);
\end{tikzpicture}
\caption{\small\bf Separation of five characteristic strips.}
\end{figure}
Estimates before $t_0^{(\eta)}$ can be carried out similarly as in \cite{christodoulou} by Christodoulou-Perez, where they studied the propagation of electromagnetic waves in nonlinear crystals and solved a first-order strictly hyperbolic system. However, after $t_0^{(\eta)}$, the non-strict hyperbolicity dramatically amplifies the nonlinear interactions. $\mathcal{R}_2$ and $\mathcal{R}_3$ (or $\mathcal{R}_5$ and $\mathcal{R}_6$) could overlap with each other for a long time. This overlapping feature results in strong nonlinear interaction. Here we illustrate the intuition with the interactions in $\mathcal{R}_2$ and $\mathcal{R}_3$. In strictly hyperbolic case as in \cite{christodoulou}, since $\mathcal{R}_2$ and $\mathcal{R}_3$ will separate after $t_0^{(\eta)}$, the interaction of corresponding characteristic waves is weak. However, for our non-strictly hyperbolic system, due to the overlapping feature, the characteristics $\mathcal{C}_{2},\mathcal{C}_{3}$ may still intersect in $\mathcal{R}_{2}$ (or $\mathcal{R}_{3}$) after a long time. This forces us to estimate in $\mathcal{R}_2\cup\mathcal{R}_3$ where the interaction is conceivably stronger. Nonetheless, we show that the strong interaction will not jeopardize the desired blow-up mechanism. To guarantee that the interaction terms are still controllable, we need to employ the subtle structures of the equations. This will be discussed in the following paragraph.

{\bf $\bullet$ Subtle structures in MHD system have to be employed:} The subtleties lying in the structures of \eqref{w} are of paramount importance in our proof. A thorough analysis of the structures is essential for tracing the geometric behaviours of the characteristics in and out of the above five strips \eqref{strips} up to shock formation time $T^*_{\eta}$. We list some crucial structures for $w_i$ here:
\begin{equation}\label{wsketch}
 \left\{ \begin{split}
    (\partial_t+\lambda_1\partial_x)w_1\thicksim&(w_1)^2+(\lambda_2-\lambda_3)w_2w_3+(\lambda_5-\lambda_6)w_5w_6+\cdots,\\
    (\partial_t+\lambda_2\partial_x)w_2\thicksim&\cancel{(w_2)^2}+\boxed{w_2w_3}+(\lambda_5-\lambda_6)w_5w_6+\cdots,\\
    (\partial_t+\lambda_3\partial_x)w_3\thicksim&(w_3)^2+(\lambda_2-\lambda_3)w_2w_3+(\lambda_5-\lambda_6)w_5w_6+\cdots,\\
    (\partial_t+\lambda_4\partial_x)w_4\thicksim&\cancel{(w_4)^2}+(\lambda_2-\lambda_3)w_2w_3+(\lambda_5-\lambda_6)w_5w_6+\cdots,\\
    (\partial_t+\lambda_5\partial_x)w_5\thicksim&(w_5)^2+(\lambda_2-\lambda_3)w_2w_3+(\lambda_5-\lambda_6)w_5w_6+\cdots,\\
    (\partial_t+\lambda_6\partial_x)w_6\thicksim&\cancel{(w_6)^2}+(\lambda_2-\lambda_3)w_2w_3+\boxed{w_5w_6}+\cdots,\\
    (\partial_t+\lambda_7\partial_x)w_7\thicksim&(w_7)^2+(\lambda_2-\lambda_3)w_2w_3+(\lambda_5-\lambda_6)w_5w_6+\cdots.\\
  \end{split}\right.
\end{equation}
The coefficients of the deleted terms in \eqref{wsketch} are proved to be \underline{zero}. In other words, there is no Riccati-type terms appearing in the equations of $\{w_i\}_{i=2,4,6}$. The vanishing of these terms indicates the linear degeneracy of the corresponding characteristics. In contrast, the non-vanishing of Riccati-type terms such as ${(w_1)}^2,{(w_3)}^2,{(w_5)}^2, {(w_7)}^2$ demonstrate the genuine nonlinearity of the corresponding characteristics and they may drive the blow-ups of the solutions.

Now we compare with the circumstance for the elastic waves in \cite{an}. The system of $w_i$ in \cite{an} reads
\begin{equation}\label{wsketchelaticity}
 \left\{ \begin{split}
    (\partial_t+\lambda_1\partial_x)w_1\thicksim&(w_1)^2+\cancel{w_2w_3}+\cancel{w_4w_5}+\cdots,\\
    (\partial_t+\lambda_2\partial_x)w_2\thicksim&\cancel{(w_2)^2}+(\lambda_2-\lambda_3)w_2w_3+(\lambda_4-\lambda_5)w_4w_5+\cdots,\\
    (\partial_t+\lambda_3\partial_x)w_3\thicksim&(w_3)^2+\cancel{w_2w_3}+\cancel{w_4w_5}+\cdots,\\
    (\partial_t+\lambda_4\partial_x)w_4\thicksim&(w_4)^2+\cancel{w_2w_3}+\cancel{w_4w_5}+\cdots,\\
    (\partial_t+\lambda_5\partial_x)w_5\thicksim&\cancel{(w_5)^2}+(\lambda_2-\lambda_3)w_2w_3+(\lambda_4-\lambda_5)w_4w_5+\cdots,\\
    (\partial_t+\lambda_6\partial_x)w_6\thicksim&(w_6)^2+\cancel{w_2w_3}+\cancel{w_4w_5}+\cdots
  \end{split}\right.
\end{equation}
where $\lambda_2\thickapprox\lambda_3$ and $\lambda_5\thickapprox\lambda_6$.

In the equations for $w_2$ and $w_5$ in \eqref{wsketchelaticity}, observe that the coefficients of $w_2w_3$ and $w_4w_5$ contain small factors $\lambda_2-\lambda_3$ and $\lambda_4-\lambda_5$ owing to the non-strict hyperbolicity. This suggests that the corresponding related interactions are weak. We were able to employ the bi-characteristic coordinates transformation $(x,t)=\big(X_j(y_j,t'(y_i,y_j)),t'(y_i,y_j)\big)$ to control these terms. For the equation of $w_2$, we integrate along $\mathcal{C}_2$ inside $\mathcal{R}_3$ to deduce
\begin{equation*}
  \begin{split}
    &\int_{\mathcal{C}_2}\big|(\lambda_2-\lambda_3)w_2\big(X_2(z_2,t'),t'\big)w_3\big(X_2(z_2,t'),t'\big)\big|dt'\\
    \leq&\int_{y_3\in[\eta,2\eta]}\Big|w_2w_3\big(y_3,t'(y_2,y_3)\big)\cancel{(\lambda_2-\lambda_3)}\frac{\rho_3\big(y_3,t'(y_2,y_3)\big)}{\cancel{\lambda_2-\lambda_3}}\Big|dy_3.
  \end{split}
\end{equation*}
The only singular factor in the above integral is $\frac{1}{\lambda_2-\lambda_3}$, which comes from the coordinate transformation. As we can see, it can be exactly cancelled by the small factor $\lambda_2-\lambda_3$ in the coefficient.

However, for the ideal compressible MHD, equations of $w_2$ and $w_6$ in \eqref{wsketch} do not include small factor $\lambda_2-\lambda_3$ or $\lambda_5-\lambda_6$ in front of $w_2 w_3$ or $w_5 w_6$, which renders our approach in \cite{an} via the bi-characteristic coordinates to fail. A more delicate choice of the initial data for $w_2, w_3, w_5, w_6$ and additional a priori estimates for
\begin{equation*}
\check{W}(t):=\max\Big\{\sup_{(x',t')\in\mathcal{R}_2\cup\mathcal{R}_3,\atop 0\leq t'\leq t}\{w_2,w_3\},\sup_{(x',t')\in\mathcal{R}_5\cup\mathcal{R}_6,\atop 0\leq t'\leq t}\{w_5,w_6\}\Big\}.
\end{equation*}
are employed to handle these terms.

Moreover, for \eqref{wsketch}, the interaction terms $w_2 w_3$ and $w_5 w_6$ also appear in the equations of $w_1$, $w_3$, $w_4$, $w_5$, $w_7$. While the corresponding terms vanish in \eqref{wsketchelaticity} since their coefficients are all zero. Nevertheless, in \eqref{wsketch} we can prove these nonlinear interactions are weak and negligible compared with the ones in the equations of $w_2$ and $w_6$. In particular, the interaction terms $w_2 w_3, w_5 w_6$ are proved to be small compared with the dominant Riccati terms since they have small coefficients $\lambda_2-\lambda_3,\lambda_5-\lambda_6$. And the smallness here enables us to use bi-characteristic coordinates to control $w_2 w_3, w_5 w_6$ with desired bounds.

{\bf $\bullet$ Delicate choice of right eigenvectors:} To explore the subtle structures of our system, we need to control the following \underline{non-trivial} coefficients $c_{im}^i$, $\gamma_{im}^i$ and $\gamma_{km}^i$ in the decomposed system \eqref{decomposed system}, where
\begin{align*}
&c_{im}^i:=\nabla_\Phi\lambda_i\cdot r_m,\\
  &\gamma_{im}^i:=-(\lambda_i-\lambda_m)l_i \cdot(\nabla_\Phi r_i \cdot r_m-\nabla_\Phi r_m \cdot r_i),\quad m\neq i,\\
  &\gamma_{km}^i:=-(\lambda_k-\lambda_m)l_i \cdot (\nabla_\Phi r_k \cdot r_m), \qquad\qquad\qquad k\neq i,\  m\neq i.
\end{align*}
Here, $\Phi=(u_1,u_2,u_3,\varrho-1, H_2,H_3,S)^T$. And $\lambda_i$, $l_i$, $r_i$ are the eigenvalues, left and right eigenvectors of $A(\Phi)$, respectively. If all these coefficients are of $O(1)$, the system will then take the schematic form presented in \eqref{wsketch} and in Theorem \ref{structurethm}. As we mentioned before, the realization of these estimates for $c_{im}^i$, $\gamma_{im}^i$, $\gamma_{km}^i$ relies heavily on carefully \underline{designed} right eigenvectors $r_i$. For example, to evaluate $\gamma_{64}^2$, one attempt is to take the following right eigenvectors
\begin{equation} \label{badr}
\begin{split}
&\tilde{r}_2=\left(\begin{array}{cc} 0\\ \frac{C_a H_3}{H_1 H_2}\\ -\frac{C_a}{H_1}\\0\\-\frac{H_3}{H_2}\\1\\0\end{array}\right),
\ \tilde{r}_4=\left(\begin{array}{cc} 0\\ 0\\ 0\\-\frac{\varrho}{\gamma}\\0\\0\\1\end{array}\right),
\ \tilde{r}_6=\left(\begin{array}{cc} 0\\ \frac{C_a}{H_1}\\ -\frac{C_a H_2}{H_1 H_3}\\0\\1\\-\frac{H_2}{H_3}\\0\end{array}\right),
\end{split}
\end{equation}
where $C_a$ is the Alfv\'en wave speed (See Section \ref{pre}), and the left eigenvector
\begin{equation} \label{badl}
\tilde{l}_2=(0,\frac{H_1H_2H_3}{2C_a(H_2^2+H_3^2)},-\frac{H_2^2H_1}{2C_a(H_2^2+H_3^2)},0,-\frac{H_2H_3}{2(H_2^2+H_3^2)},\frac{H_2^2}{2(H_2^2+H_3^2)},0).
\end{equation}
Here, $H_1=\kappa \neq 0$ and the elements in $\tilde{l}_2$ are of order $1$. And it follows
\begin{equation} \label{badgamma}
\gamma_{64}^2=-(\lambda_6-\lambda_4)\tilde{l}_2 \cdot (\nabla_\Phi \tilde{r}_6 \cdot \tilde{r}_4)=-\frac{(\lambda_6-\lambda_4)H_2}{4\gamma H_3}.
\end{equation}
However, since both $H_2$ and $H_3$ are unknowns with values close to $0$, the term $\frac{H_2}{H_3}$ is very problematic and may exhibit unexpected singularity. It may render the failure of the whole argument. To overcome this problem, we renormalize and design the right eigenvectors and left eigenvectors in the following form
\begin{equation*}
\begin{split}
&r_2=\left(\begin{array}{cc} 0\\ \frac{C_a H_3}{H_1}\\ -\frac{C_a H_2}{H_1}\\0\\-H_3\\H_2\\0\end{array}\right),\ r_4=\left(\begin{array}{cc} 0\\ 0\\ 0\\-\frac{\varrho}{\gamma}\\0\\0\\1\end{array}\right),\ r_6=\left(\begin{array}{cc} 0\\ \frac{C_a H_3}{H_1}\\ -\frac{C_a H_2}{H_1}\\0\\H_3\\-H_2\\0\end{array}\right),
\end{split}
\end{equation*}
\begin{equation*}
l_2=(0,\frac{H_1H_3}{2C_a(\text{\dashuline{$H_2^2+H_3^2$}})},-\frac{H_2H_1}{2C_a(\text{\dashuline{$H_2^2+H_3^2$}})},0,-\frac{H_3}{2(\text{\dashuline{$H_2^2+H_3^2$}})},\frac{H_2}{2(\text{\dashuline{$H_2^2+H_3^2$}})},0).
\end{equation*}
Note that the right eigenvectors are more regular, but the left eigenvectors are more singular, since they always satisfy $l_ir_j=\delta_{ij}$ and $\tilde{l}_i\tilde{r}_j=\delta_{ij}$. With the new set of $r_2,r_4,r_6$ and $l_2$, we have
\begin{eqnarray*}
\gamma_{64}^2&=&-(\lambda_6-\lambda_4)l_2 \cdot (\nabla_\Phi r_6 \cdot r_4)\\
&=&-(\lambda_6-\lambda_4)l_2 \cdot \Big(0,\frac{C_a H_3}{2H_1\gamma},-\frac{C_a H_2}{2H_1\gamma
},0,0,0,0\Big)^\top \\
&=&-(\lambda_6-\lambda_4)\Big(\frac{H_3^2}{4\gamma(H_2^2+H_3^2)}+\frac{H_2^2}{4\gamma(H_2^2+H_3^2)}\Big)\\
&=&-\frac{\lambda_6-\lambda_4}{4\gamma}=O(1).
\end{eqnarray*}
Compared with \eqref{badgamma}, the singular term is \underline{cancelled} in a specific way. Note that in $l_2$, a potentially singular factor $\frac{1}{H_2^2+H_3^2}$ appears since $H_2$ and $H_3$ are close to zero. But with a detailed matrix multiplication, the singular denominator is cancelled. Not only for $\gamma^2_{64}$, we have checked 299 terms with 17 pages calculations and find that with our designed right and left eigenvectors in \eqref{regv} and \eqref{legv}, all these coefficients $c^i_{im}$, $\gamma^i_{im}$ and $\gamma^i_{km}$ are uniformly bounded.

{\bf $\bullet$ Control of nonlinear interactions of multiple waves:} Due to the multiple-wave-speed trait of the MHD, the decomposed system \eqref{decomposed system} exhibits nonlinear interactions among different characteristic strips. The high complexity of the coupled nonlinear terms poses a major technical difficulty, especially after the separating time $t_0^{(\eta)}$. To disclose how different waves interact with each other, we must distinguish its influence to other characteristic waves in its own strips and in the exterior. Firstly, outside of the strips, we estimate the following quantities
\begin{equation*}
\begin{split}
  & V_i(t):=\sup_{(x',t')\notin\mathcal{R}_i,\atop 0\leq t'\leq t}|w_i(x',t')|,\quad \text{for} \quad i=1,4,7,\\
  &V_{\bar{2}}(t):=\max\sup_{(x',t')\notin\mathcal{R}_2\cup\mathcal{R}_3,\atop 0\leq t'\leq t}\{|w_2(x',t')|,|w_3(x',t')|\},\\
 &V_{\bar{5}}(t):=\max\sup_{(x',t')\notin\mathcal{R}_5\cup\mathcal{R}_6,\atop 0\leq t'\leq t}\{|w_5(x',t')|,|w_6(x',t')|\}, \\
&V(t):=\max_{i}V_i(t)\quad \text{for} \ i=1,\bar{2},4,\bar{5},7.
\end{split}
\end{equation*}
Note that $V_{\bar{2}}$ and $V_{\bar{5}}$ are defined in a form different from that of $V_1$, $V_4$, $V_7$. This is because, as we emphasized before, for our non-strictly hyperbolic system, the almost-repeated characteristic waves $w_2,w_3$ (or $w_5,w_6$) are treated together in $\mathcal{R}_2\cup\mathcal{R}_3$ (or $\mathcal{R}_5\cup\mathcal{R}_6$). We then prescribe initial data such that only $w_1$ will blow up. This is feasible since the equation of $w_1$ is of Riccati type. Yet it is still hard to solve the Riccati equation directly in the presence of other nonlinear terms. To overcome this difficulty, we introduce the \underline{geometric quantities}---the inverse densities $\rho_i$ to characterize the compression of the $i^{\text{th}}$ characteristics. To control $w_i$, we appeal to investigating dynamics of $\rho_i$ and $v_i:=\rho_iw_i$ instead by estimating
\begin{align*}
  S_i(t):=&\sup_{(z'_i,s'_i)\atop z'_i\in[\eta,2\eta],\ 0\leq s'_i\leq t}\rho_i(z'_i,s'_i),&J_i(t):=&\sup_{(z'_i,s'_i)\atop z'_i\in[\eta,2\eta]\ 0\leq s'_i\leq t}|v_i(z'_i,s'_i)|.
\end{align*}
For the purpose of demonstration, we use $\rho_1$ as an example. To bound $\rho_1$, we estimate $S_1$ via integrating \eqref{eqrho}
\begin{equation*}
  \frac{\partial \rho_1}{\partial s_1}=O(J_1+ V S_1).
\end{equation*}
The first step then is to estimate $J_1$. We integrate \eqref{eqv}
\begin{equation*}
  \frac{\partial v_1}{\partial s_1}=O(VJ_1+ V^2 S_1).
\end{equation*}
Here, it is important to analyze the behaviours of the nonlinear terms on the right. The next step is to investigate $V_i$. Since a priori we do not have the estimates for the solution's life span $T$, we employ the bi-characteristic coordinates to control $V_i$. Note that to control $\rho_1$ in $\mathcal{R}_1$, we carefully estimate $J_1$ and all $V_i$ simultaneously. Compared with controlling $\rho_1$ in $\mathcal{R}_1$, the situation in the overlapped strips is more intricate. We refer the readers to Section \ref{apes} for more details.

{\bf $\bullet$ Meaning of ill-posedness:} For ideal compressible MHD (\ref{MHD}), we further prove the $H^2$ ill-posedness in 3D and we also demonstrate that the ill-posedness is driven by shock formation. In particular, there exists a family of initial data $\Phi_0^{(\eta)}$ with their $H^2$ norm tends to $0$, and the $H^2$ norms of their corresponding solutions in 3D blow up at the shock formation time $T^*_{\eta}$.  

Based on our decomposition of waves, we trace the evolution of inverse densities $\rho_i$ ($i=1,\cdots,7$). In $\mathcal{R}_1$, we prove that $\rho_1(z_0,t)\to 0$ as $t\to T_\eta^*$ which renders $T_\eta^*$ to be the first shock formation time. And $\rho_1$ stays positive outside $\mathcal{R}_1$. Then, with a uniform bound of $\partial_{z_1}\rho_1$ obtained in Subsection \ref{supbound}, in a constructed spatial region $\Omega_{T^*_\eta}$, we prove that
\begin{equation*}
\begin{split}
\|\Phi^{(\eta)}(\cdot,T^*_\eta)\|_{H^1(\Omega_{T^*_\eta})}^2&\geq C_\eta{\int_{z_0}^{z_0^*}\frac{1}{\rho_1(z,T_\eta^*)}dz}\overset{(*)}{=}C_\eta{\int_{z_0}^{z_0^*}\frac{1}{\rho_1(z,T_\eta^*)-\rho_1(z_0,T_\eta^*)}dz}\\ &\geq C_\eta\int_{z_0}^{z_0^*}\frac{1}{(z-z_0)\sup|\partial_z\rho_1|}dz\\
&\geq C_\eta\int_{z_0}^{z_0^*}\frac{1}{z-z_0}dz=+\infty.
\end{split}
\end{equation*}
As we mentioned in Remark \ref{rhoill}, one can see from equality $(*)$ that the vanishing of $\rho_1$ crucially drives the above blow-up. These conclusions reveal a hidden mechanism: the shock formation underlies the $H^2$ ill-posedness.

\subsection{Main steps in the proof} \label{step}
We outline our proof of Theorem \ref{structurethm}-Theorem \ref{3D} in this subsection.

\textbf{Step 1: Decomposition of waves (Section \ref{pre}).} Our first step is to algebraically decompose the non-strictly hyperbolic $7\times 7$ system \eqref{1order}. In this paper, we will use both characteristic coordinates $(z_i,s_i)$ and bi-characteristic coordinates $(y_i,y_j)$. Let $\mathcal{C}_i(z_i)$ be the $i^{\text{th}}$ characteristic with propagation speed $\lambda_i$ starting at $z_i$. Denote the corresponding $i^{\text{th}}$ characteristic strip to be $\mathcal{R}_i:=\cup_{z_i\in I_0}\mathcal{C}_i(z_i)$, where $I_0$ is the support of initial data. Define the inverse density of the $i^{\text{th}}$ characteristics
$$\rho_i:=\partial_{z_i}X_i.$$
For fixed $i$, let
\begin{equation*}
  w_i:=l_i\partial_x\Phi, \quad \text{and}\quad
  v_i:=\rho_iw_i.
\end{equation*}
Then, with the left and right eigenvectors satisfying $l_i r_j=\delta_{ij}$, one can decompose the original system in the following way
$$\partial_x\Phi=\sum_k w_kr_k.$$
One can verify that these quantities satisfy
\begin{align}
  \partial_{s_i}\rho_i=&c_{ii}^iv_i+\Big(\sum_{m\neq i}c_{im}^iw_m\Big)\rho_i, \label{122}\\
  \partial_{s_i}w_i=&-c_{ii}^i(w_i)^2+\Big(\sum_{m\neq i}(-c_{im}^i+\gamma_{im}^i)w_m\Big)w_i+\sum_{m\neq i,k\neq i\atop m\neq k}\gamma_{km}^iw_kw_m,\label{123}\\
  \partial_{s_i}v_i=&\Big(\sum_{m\neq i}\gamma_{im}^iw_m\Big)v_i+\sum_{m\neq i,k\neq i\atop m\neq k}\gamma_{km}^iw_kw_m\rho_i.\label{124}
\end{align}

\textbf{Step 2: Analysis of structures (Section \ref{structure}).} We explore the subtle structures of the above decomposed system \eqref{122}-\eqref{124} by carrying out a thorough calculation of all the coefficients $c^i_{ii},c^i_{im},\gamma^i_{im},\gamma^i_{km}$. Firstly, recall that $c_{ii}^i$ being non-vanishing corresponds to the genuine non-linearity. We show that $c_{11}^1<0$ in Lemma \ref{gn}. Hence the genuinely nonlinear condition is verified and $w_1$ satisfies a Riccati-type equation. In fact, the blow-up of $w_1$ builds on the genuine non-linearity. Then, we provide an explicit detailed computation of all coefficients. Based on our calculation, the decomposed system admits a schematic structure \eqref{infmrho}-\eqref{infmv}. We present the key ideas in Section \ref{structure}. The Appendix \ref{appendixB}, a 14-page concrete calculation of the system's structure, supplies all ingredients to complete the proof of Theorem \ref{structurethm}.

\textbf{Step 3: A priori estimates (Subsection \ref{apes}).} For the decomposed system \eqref{decomposed system}, characteristic waves in different $\mathcal{R}_i$ interact with each other. In particular, two pairs of characteristic strips ($\mathcal{R}_2$ and $\mathcal{R}_3$, $\mathcal{R}_5$ and $\mathcal{R}_6$) may overlap for a long time which yields strong nonlinear interactions. Our strategy is to study the dynamics of waves in the following five strips:
\begin{equation*}
\{\mathcal{R}_1,\mathcal{R}_2\cup\mathcal{R}_3,\mathcal{R}_4,\mathcal{R}_5\cup\mathcal{R}_6,\mathcal{R}_7\}.
  \end{equation*}
Then we will prove that a shock happens (and only happens) in $\mathcal{R}_1$. Initially, we prepare compactly supported smooth data $ \{w_i^{(\eta)}(z_i,0)\}$ satisfying \eqref{Wr}. Observe that after a finite time $t_0^{(\eta)}$ (see \eqref{t0}) these five strips are separated. Our a priori estimates are thus divided into two parts: before and after $t_0^{(\eta)}$.

\begin{itemize}
\item{\bf Estimates for $\mathbf{t\in[0,t_0^{(\eta)}]}$:} Before the separating time $t_0^{(\eta)}$, all the characteristic strips $\{\mathcal{R}_i\}$ overlap with each other. We define
\begin{align*}
  W(t):=\max_{i\in\{1,2,3,4,5,6,7 \}}\sup_{(x',t')\atop 0\leq t'\leq t}|w_i(x',t')|.
\end{align*}
In Section \ref{apes}, via a Gr\"{o}nwall-type argument, we firstly obtain
\begin{equation*}
  W(t)=O(W_0^{(\eta)})
\end{equation*}
with $W_0^{(\eta)}:=\max\limits_i\sup\limits_{z_i}|w_i(z_i,0)|$. Next, we introduce the upper bounds $\rho_i,v_i$ inside the strips
\begin{align*}
  S_i(t):=&\sup_{(z'_i,s'_i)\atop z'_i\in[\eta,2\eta],\ 0\leq s'_i\leq t}\rho_i(z'_i,s'_i),&S(t):=&\max_{i\in\{1,2,3,4,5,6,7 \}}S_i(t),\\
  J_i(t):=&\sup_{(z'_i,s'_i)\atop z'_i\in[\eta,2\eta]\ 0\leq s'_i\leq t}|v_i(z'_i,s'_i)|,&J(t):=&\max_{i\in\{1,2,3,4,5,6,7 \}}J_i(t).
\end{align*}
With $t_0^{(\eta)}$ explicitly given in \eqref{t0}, it is straight forward to estimate these quantities before $t_0^{(\eta)}$. Integrating the transport equations under the characteristic coordinates, we prove that when $t<t_0^{(\eta)}$
\begin{align*}
   S(t)=O(1),\quad\quad J(t)=&O(W_0^{(\eta)}).
\end{align*}

\item{\bf Estimates for $\mathbf{t\in[t_0^{(\eta)},T]}$:} For $t$ larger than $t_0^{(\eta)}$, $\mathcal{R}_2$ and $\mathcal{R}_3$ (or $\mathcal{R}_5$ and $\mathcal{R}_6$) is not completely separated due to the non-strict hyperbolicity. Nonetheless, the aforementioned five characteristic strips $\mathcal{R}_1,\mathcal{R}_2\cup\mathcal{R}_3,\mathcal{R}_4,\mathcal{R}_5\cup\mathcal{R}_6,\mathcal{R}_7$ are well separated. Adapted to this peculiarity, to replace $W$ in this part we introduce the following quantity
\begin{equation*}
\check{W}(t):=\max\Big\{\sup_{(x',t')\in\mathcal{R}_2\cup\mathcal{R}_3,\atop 0\leq t'\leq t}\{w_2,w_3\},\sup_{(x',t')\in\mathcal{R}_5\cup\mathcal{R}_6,\atop 0\leq t'\leq t}\{w_5,w_6\}\Big\}.
\end{equation*}
Furthermore, to give a thorough description of the dynamics after the separating time $t_0^{(\eta)}$, we need to estimate $w_i$ outside of the strips and we define
\begin{align*}
 &V_1(t):=  \sup_{(x',t')\notin\mathcal{R}_1,\atop 0\leq t'\leq t}|w_1(x',t')|,\quad  V_{\bar{2}}(t):=\sup_{(x',t')\notin\mathcal{R}_2\cup\mathcal{R}_3,\atop 0\leq t'\leq t}\{|w_2(x',t')|,|w_3(x',t')|\},\\
&V_4(t):=\sup_{(x',t')\notin\mathcal{R}_4,\atop 0\leq t'\leq t}|w_4(x',t')|,\quad V_{\bar{5}}(t):=\sup_{(x',t')\notin\mathcal{R}_5\cup\mathcal{R}_6,\atop 0\leq t'\leq t}\{|w_5(x',t')|,|w_6(x',t')|\},\\
&V_7(t):=\sup_{(x',t')\notin\mathcal{R}_7,\atop 0\leq t'\leq t}|w_7(x',t')|, \quad V(t):=\max_{i=1,\bar{2},4,\bar{5},7}V_i(t).
\end{align*}
Estimates of $V$ before $t_0^{(\eta)}$ can be similarly obtained as we just discussed
\begin{equation*}
   V(t)=O(\eta [W_0^{(\eta)}]^2).
\end{equation*}
After $t_0^{(\eta)}$, as we emphasized in Subsection \ref{difficulty}, we apply the bi-characteristic coordinates to bound certain quantities. Specifically, we use this technique to estimate $V_i$. In summary, for $t\in[t_0^{(\eta)},T]$, we obtain the following a priori estimates in the characteristic strips:
\begin{equation*}
\begin{split}
    S=&O(1+tVS+tJ+tS\check{W}),\\
     J=&O(W_0^{(\eta)}+t VJ+tV^2 S+t\check{W}J+tVS\check{W}),\\
    V=&O(\eta [W_0^{(\eta)}]^2+tV^2+\eta VJ+tV\check{W}+t(\check{W})^2), \\
    \check{W}=&O(\eta[W_0^{(\eta)}]^2+t\check{W}^2+tV\check{W}+tV^2).
    \end{split}
\end{equation*}

\end{itemize}

\textbf{Step 4: $L^\infty$-estimates and bootstrap argument (Subsection \ref{bootargu}).} We setup bootstrap assumptions in \eqref{y33}\eqref{wca}\eqref{Ja}. Based on the above estimates in Subsection \ref{apes}, via a bootstrap argument we obtain the desired $L^\infty$-estimates of the solution:
\begin{equation*}
J=O(W_0^{(\eta)}),\, S=O(1),\, V=O\Big(\eta [W_0^{(\eta)}]^2\Big),\,\check{W}=O(\eta[W_0^{(\eta)}]^2).
\end{equation*}
We depict the detailed behaviour of the solution up to the shock formation time. See Subsection \ref{bootargu} for the details.

\textbf{Step 5: Shock formation (Subsection \ref{pf1.2}).} We shall prove that a shock forms in $\mathcal{R}_1$ (Theorem \ref{shock}). With the estimates obtained in Section \ref{bootargu}, we derive the following sharp control of $\rho_1$:
\begin{equation*}
\rho_1 \sim 1-t W_0^{(\eta)}.
\end{equation*}
This implies that $\rho_1$ would become $0$ (a shock forms) at a finite time $T_{\eta}^* $ satisfying
\begin{equation*}
  T_{\eta}^*  \sim \frac{1}{W_0^{(\eta)}}.
\end{equation*}
Moreover, we deduce that $v_1$ has a positive lower bound $(1-\varepsilon) W_0^{(\eta)}$. Since $v_1=\rho_1 w_1$, it follows that $w_1$ will blow up at $T_{\eta}^*$.

\textbf{Step 6: $H^2$ ill-posedness (Section \ref{pfill}).} Furthermore, we prove the $H^2$ ill-posedness (Theorem \ref{3D}). We first choose a family of appropriate $H^2$-initial data \eqref{data}\eqref{data2} for $w_i^{(\eta)}$ in Subsection \ref{illdata}. For each $\eta$, in Subsection \ref{ill} we prove that the $H^1$-norm of the solution to \eqref{MHD} tends to infinity as $t \to T_\eta^*$. Here, we apply an upper bound estimate of $\partial_{z_1}\rho_1$ obtained in Subsection \ref{supbound}. The underlying reason for ill-posedness, embedded in our proof, is indeed the shock formation. This is the desired ill-posedness result for 3D ideal compressible MHD in $H^2(\mathbb{R}^3)$.

Finally, in Section \ref{appendix} and Section \ref{euler ill}, we prove Theorem \ref{shock}-\ref{3D} for the case when $H_1=0$ (we treat the case with non-trivial $H_1$ in all other sections) and for the 3D compressible Euler equations (i.e., $H=0$), respectively.

\subsection{Other related works}
In this subsection, we refer to related works in MHD. For 3D ideal compressible MHD, Ohno-Shirota \cite{ohno} proved that the characteristic fixed-boundary linearized problem satisfying perfect conducting wall condition is ill-posed in the standard Sobolev spaces $H^l$ with $l \geq 2$. We also refer to \cite{chen} by Chen-Young-Zhang for a conditional\footnote{In \cite{chen}, the authors considered the case of $H_1=0$ and imposed the condition that the fluid density $\varrho$ has global lower and upper bounds.} shock formation result for 1D compressible MHD. Another active topic for this system is the free-boundary problem. We refer to Chen-Wang \cite{chen-wang}, Lindblad-Zhang \cite{lindblad-zhang}, Trakhinin-Wang \cite{trakhinin1} for related results. The readers can also read Hu-Wang \cite{hu-wang}, Jiang-Ju-Li-Xin \cite{jiang}, Li-Li \cite{li} and Wang-Xin \cite{wang-xin} for other mathematical topics about compressible MHD.

For incompressible MHD, the global well-posedness of Cauchy problem has been studied by Cai-Lei \cite{cai}, Deng-Zhang \cite{deng2}, He-Xu-Yu \cite{he-xu-yu}, Hu-Lin \cite{hu}, Lin-Zhang \cite{lin2}, Wei-Zhang \cite{wei1,wei2,wei3}, Xu-Zhang \cite{Xu-Zhang}, Zhang \cite{Zhang}. Jeong-Oh proved ill-posedness results for the Hall- and electron-MHD in \cite{oh}. For free-boundary problems, we refer to Gu-Luo-Zhang \cite{gu}, Gu-Wang \cite{gu-wang}, Hao-Luo \cite{hao}, Sun-Wang-Zhang \cite{Sun1,Sun2}, Wang-Xin \cite{wang2-xin} and references therein.

\subsection{Acknowledgements}
The authors would like to thank Prof. Zhouping Xin for an enlightening discussion.

\section{Decomposition of waves} \label{pre}
In this section, we rewrite the MHD system with the wave decomposition method. We will compute the eigenvalues and eigenvectors of the coefficient matrix for the decomposed system. We then define the corresponding characteristics and introduce the characteristic (and bi-characteristic) coordinates. With them we will trace the dynamics of characteristic waves of different travelling speeds.

For the 3D ideal compressible MHD \eqref{MHD}, we collect the unknowns as a $8\times 1$ vector-valued function $\mathcal{U}$ with variables $x_1,x_2,x_3$ and $t$. We let
$$\mathcal{U}(x_1,x_2,x_3,t)=(u_1,u_2,u_3,\varrho, H_1,H_2,H_3,S)^T(x_1,x_2,x_3,t).$$
To prove ill-posedness for \eqref{MHD}, as in \cite{an,lindblad17,Lind96,Lind98}, we work under planar symmetry. And the solution satisfies $\mathcal{U}(x_1,x_2,x_3,t)=U(x_1,t).$ For notational simplicity, we denote $x_1$ by $x$ hereafter.

By the fifth equation of \eqref{MHD}, i.e., Gauss's Law, we obtain
\begin{equation*}
\nabla\cdot H=\frac{\partial H_1}{\partial x_1}+\frac{\partial H_2}{\partial x_2}+\frac{\partial H_3}{\partial x_3}=\frac{\partial H_1}{\partial x}=0, \quad \text{hence}\,\, H_1(x,t)=H_1(t).
\end{equation*}
And from the third equation of \eqref{MHD}, we further have $\frac{dH_1}{dt}=0.$ Thus, $H_1(x,t)$ equals to a constant $\kappa$ during the evolution. We require that the constant $\kappa$ is small in the following sense
\begin{equation} \label{h1}
\kappa^2 \ll \min\{\mu_0^{-1}A\gamma,1\}.
\end{equation}

We then collect the remaining unknowns $(u_1,u_2,u_3,\varrho,H_2,H_3,S)^T$ and they satisfy the following equations
\begin{equation}\label{ppMHD}
\left\{\begin{split}
&\varrho\partial_tu_1+\varrho u_1\partial_x u_1+c^2\partial_x\varrho+\partial_S p\partial_xS+\mu_0H_2\partial_xH_2+\mu_0H_3\partial_xH_3=0,\\
&\varrho\partial_tu_2+\varrho u_1\partial_x u_2-\mu_0H_1\partial_xH_2=0,\\
&\varrho\partial_tu_3+\varrho u_1\partial_x u_3-\mu_0H_1\partial_xH_3=0,\\
&\partial_t\varrho+u_1\partial_x\varrho+\varrho\partial_x u_1=0,\\
&\partial_tH_2+u_1\partial_x H_2-H_1\partial_xu_2+H_2\partial_xu_1=0,\\
&\partial_tH_3+u_1\partial_x H_3-H_1\partial_xu_3+H_3\partial_xu_1=0,\\
&\partial_t S+u_1\partial_xS=0.
\end{split}
\right.
\end{equation}
Here, $H_1=\kappa$ and $c=\sqrt{\partial_\varrho p}$ is the sound speed and the pressure $p$ satisfies the equation of state \eqref{state}.

We start to rewrite \eqref{ppMHD} as a $7\times 7$ hyperbolic system. Let $$\Phi:=(u_1,u_2,u_3,\varrho-1,H_2,H_3,S)^T(x,t).$$
From \eqref{ppMHD}, $\Phi$ obeys
\begin{equation}\label{pMHD}
\partial_t\Phi+A(\Phi)\partial_x\Phi=0
\end{equation}
with the coefficient matrix
\begin{equation} \label{matrix}
A(\Phi)=\begin{pmatrix}
u_1 & 0 & 0& c^2/\varrho & \mu_0H_2/\varrho & \mu_0H_3/\varrho & \partial_Sp/\varrho \\
0 & u_1 & 0& 0 & -\mu_0H_1/\varrho & 0 & 0 \\
0 & 0  & u_1 & 0 & 0 & -\mu_0H_1/\varrho& 0 \\
\varrho & 0 & 0& u_1 & 0 & 0 & 0 \\
H_2 & -H_1 & 0& 0 &u_1 & 0 & 0 \\
H_3& 0 & -H_1& 0 & 0 & u_1 & 0 \\
0 & 0 & 0& 0 & 0 & 0 &  u_1
\end{pmatrix}.
\end{equation}
The eigenvalues of \eqref{matrix} are
\begin{equation}\label{egvl}
\begin{split}
&\lambda_1=u_1+C_f,\quad \lambda_2=u_1+C_a,\quad \lambda_3=u_1+C_s,\quad \lambda_4=u_1,\\
&\lambda_5=u_1-C_s,\quad \lambda_6=u_1-C_a,\quad \lambda_7=u_1-C_f,
\end{split}
\end{equation}
where
\begin{equation} \label{charac speeds}
\begin{split}
&C_f=\Big\{\frac{\mu_0}{2\varrho}(H_1^2+H_2^2+H_3^2)+\frac{c^2}{2}+\frac12\sqrt{[\frac{\mu_0}{\varrho}(H_1^2+H_2^2+H_3^2)+c^2]^2-\frac{4\mu_0}{\varrho}H_1^2c^2}\Big\}^{1/2},\\
&C_s=\Big\{\frac{\mu_0}{2\varrho}(H_1^2+H_2^2+H_3^2)+\frac{c^2}{2}-\frac12\sqrt{[\frac{\mu_0}{\varrho}(H_1^2+H_2^2+H_3^2)+c^2]^2-\frac{4\mu_0}{\varrho}H_1^2c^2}\Big\}^{1/2},\\
&C_a=\sqrt{\frac{\mu_0}{\varrho}}H_1
\end{split}
\end{equation}
are called the fast, the slow and the Alfv\'en wave speeds, respectively. Here, the characteristic speed $\lambda_4$ corresponds to the propagation speed of the entropy disturbance that just follows the stream-line of the flow. The pair $\lambda_2$ and $\lambda_6$ correspond respectively to the speeds of the Alfv\'en disturbances moving forward and backward along the flow. And $\lambda_3,\lambda_5$ are speeds of forward and backward slow-magneto-acoustic disturbances, whereas $\lambda_1,\lambda_7$ constitute speeds of forward and backward fast-magneto-acoustic disturbances. Noting that $H_1$ satisfies \eqref{h1}, when $|\Phi|$ is small enough, one can verify that
\begin{equation} \label{3speeds}
C_s\leq C_a< C_f.
\end{equation}
The bulk of the paper is devoted to studying the case of non-trivial $\kappa$. The case of $\kappa=0$ will be studied in Section \ref{appendix}. With non-trivial $\kappa$, by \eqref{3speeds}, the corresponding eigenvalues obtained in \eqref{egvl} satisfy:
\begin{equation}\label{negv}
\begin{split}
\lambda_7<\lambda_6\leq\lambda_5<\lambda_4<\lambda_3\leq\lambda_2<\lambda_1.
\end{split}
\end{equation}
In \eqref{negv}, the equalities hold if and only if $H_2^2+H_3^2=0$. Since $\lambda_2,\lambda_3$ (or $\lambda_5,\lambda_6$) could take the same value, the system \eqref{pMHD} is \underline{not} strictly hyperbolic.

A very crucial step is to take proper right eigenvectors of $A(\Phi)$. By carefully designing, we take the right eigenvectors of $A(\Phi)$ as follows:
\begin{equation}\label{regv}
\begin{split}
&r_1=\left(\begin{array}{cc}\frac{C_a^2}{C_f}-C_f\\
\frac{C_a^2 H_2}{C_f H_1 }\\
\frac{C_a^2 H_3}{C_f H_1 }\\
\varrho(\frac{C_a^2}{C_f^2}-1)\\
-H_2\\
-H_3\\
0\end{array}\right),
\ r_2=\left(\begin{array}{cc} 0\\ \frac{C_a H_3}{H_1 }\\ -\frac{C_a H_2}{H_1}\\0\\-H_3\\H_2\\0\end{array}\right),
\ r_3=\left(\begin{array}{cc}\frac{C_a^2}{C_s}-C_s\\
\frac{C_a^2 H_2}{C_s H_1 }\\
\frac{C_a^2 H_3}{C_s H_1 }\\
\varrho(\frac{C_a^2}{C_s^2}-1)\\
-H_2\\
-H_3\\
0\end{array}\right),
\ r_4=\left(\begin{array}{cc} 0\\ 0\\ 0\\-\frac{\varrho}{\gamma}\\0\\0\\1\end{array}\right), \\
& r_5=\left(\begin{array}{cc}\frac{C_a^2}{C_s}-C_s\\
\frac{C_a^2 H_2}{C_s H_1 }\\
\frac{C_a^2 H_3}{C_s H_1 }\\
-\varrho(\frac{C_a^2}{C_s^2}-1)\\
H_2\\
H_3\\
0\end{array}\right),
\ r_6=\left(\begin{array}{cc} 0\\ \frac{C_a H_3}{H_1}\\ -\frac{C_a H_2}{H_1 }\\0\\H_3\\-H_2\\0\end{array}\right),
\ r_7=\left(\begin{array}{cc}\frac{C_a^2}{C_f}-C_f\\
\frac{C_a^2 H_2}{C_f H_1 }\\
\frac{C_a^2 H_3}{C_f H_1 }\\
-\varrho(\frac{C_a^2}{C_f^2}-1)\\
H_2\\
H_3\\
0\end{array}\right).
\end{split}
\end{equation}
As emphasized in Subsection \ref{difficulty}, this selection of right eigenvectors is a key point to our proof. The corresponding left eigenvectors $l_i$ are set dual to the right ones, i.e.,
\begin{equation}\label{lr}
l_ir_j=\delta_{ij}\quad \text{for}\ i,j=1,\cdots,7.
\end{equation}
Here $\delta_{ij}$ is the Kronecker symbol. Specifically, we list these $l_i$:
\begin{equation}\label{legv}
\begin{split}
l_1=&\bigg(-\frac{C_f}{2(C_f^2-C_s^2)},\frac{H_1H_2C_f(C_a^2-C_s^2)}{2(\text{\dashuline{$H_2^2+H_3^2$}})C_a^2(C_f^2-C_s^2)},\frac{H_1H_3C_f(C_a^2-C_s^2)}{2(\text{\dashuline{$H_2^2+H_3^2$}})C_a^2(C_f^2-C_s^2)},-\frac{C_f^2C_s^2}{2\varrho C_a^2(C_f^2-C_s^2)},\\
&-\frac{H_2C_f^2(C_a^2-C_s^2)}{2(\text{\dashuline{$H_2^2+H_3^2$}})C_a^2 (C_f^2-C_s^2)},-\frac{H_3C_f^2(C_a^2-C_s^2)}{2(\text{\dashuline{$H_2^2+H_3^2$}})C_a^2 (C_f^2-C_s^2)},-\frac{C_f^2C_s^2}{2\gamma C_a^2 (C_f^2-C_s^2)}\bigg),\\
l_2=&\bigg(0,\frac{H_1H_3}{2C_a(\text{\dashuline{$H_2^2+H_3^2$}})},-\frac{H_1 H_2}{2C_a(\text{\dashuline{$H_2^2+H_3^2$}})},0,-\frac{H_3}{2(\text{\dashuline{$H_2^2+H_3^2$}})},\frac{H_2}{2(\text{\dashuline{$H_2^2+H_3^2$}})},0\bigg),\\
l_3=&\bigg(\frac{C_s}{2(C_f^2-C_s^2)},-\frac{H_1H_2C_s(C_a^2-C_f^2)}{2(\text{\dashuline{$H_2^2+H_3^2$}})C_a^2(C_f^2-C_s^2)},-\frac{H_1H_3C_s(C_a^2-C_f^2)}{2(\text{\dashuline{$H_2^2+H_3^2$}})C_a^2(C_f^2-C_s^2)},\frac{C_f^2C_s^2}{2\varrho C_a^2(C_f^2-C_s^2)},\\
&\frac{H_2C_s^2(C_a^2-C_f^2)}{2(\text{\dashuline{$H_2^2+H_3^2$}})C_a^2 (C_f^2-C_s^2)},\frac{H_3C_s^2(C_a^2-C_f^2)}{2(\text{\dashuline{$H_2^2+H_3^2$}})C_a^2 (C_f^2-C_s^2)},\frac{C_f^2C_s^2}{2\gamma C_a^2 (C_f^2-C_s^2)}\bigg),\\
l_4=&\big(0,0,0,0,0,0,1\big),\\
l_5=&\bigg(\frac{C_s}{2(C_f^2-C_s^2)},-\frac{H_1H_2C_s(C_a^2-C_f^2)}{2(\text{\dashuline{$H_2^2+H_3^2$}})C_a^2(C_f^2-C_s^2)},-\frac{H_1H_3C_s(C_a^2-C_f^2)}{2(\text{\dashuline{$H_2^2+H_3^2$}})C_a^2(C_f^2-C_s^2)},-\frac{C_f^2C_s^2}{2\varrho C_a^2(C_f^2-C_s^2)},\\
&-\frac{H_2C_s^2(C_a^2-C_f^2)}{2(\text{\dashuline{$H_2^2+H_3^2$}})C_a^2 (C_f^2-C_s^2)},-\frac{H_3C_s^2(C_a^2-C_f^2)}{2(\text{\dashuline{$H_2^2+H_3^2$}})C_a^2 (C_f^2-C_s^2)},-\frac{C_f^2C_s^2}{2\gamma C_a^2 (C_f^2-C_s^2)}\bigg),\\
l_6=&\bigg(0,\frac{H_1H_3}{2C_a(\text{\dashuline{$H_2^2+H_3^2$}})},-\frac{H_1H_2}{2C_a(\text{\dashuline{$H_2^2+H_3^2$}})},0,\frac{H_3}{2(\text{\dashuline{$H_2^2+H_3^2$}})},-\frac{H_2}{2(\text{\dashuline{$H_2^2+H_3^2$}})},0\bigg),\\
l_7=&\bigg(-\frac{C_f}{2(C_f^2-C_s^2)},\frac{H_1H_2C_f(C_a^2-C_s^2)}{2(\text{\dashuline{$H_2^2+H_3^2$}})C_a^2(C_f^2-C_s^2)},\frac{H_1H_3C_f(C_a^2-C_s^2)}{2(\text{\dashuline{$H_2^2+H_3^2$}})C_a^2(C_f^2-C_s^2)},\frac{C_f^2C_s^2}{2\varrho C_a^2(C_f^2-C_s^2)},\\
&\frac{H_2C_f^2(C_a^2-C_s^2)}{2(\text{\dashuline{$H_2^2+H_3^2$}})C_a^2 (C_f^2-C_s^2)},\frac{H_3C_f^2(C_a^2-C_s^2)}{2(\text{\dashuline{$H_2^2+H_3^2$}})C_a^2 (C_f^2-C_s^2)},\frac{C_f^2C_s^2}{2\gamma C_a^2 (C_f^2-C_s^2)}\bigg)
\end{split}
\end{equation}
with $C_f,C_s,C_a$ defined in \eqref{charac speeds}.

\begin{remark}
Note that there is the singular factor $\frac{1}{H_2^2+H_3^2}$ present in the $2^{\text{nd}}$,$3^{\text{rd}}$,$5^{\text{th}}$,$6^{\text{th}}$ components of all $l_j$ with $j \neq 4$. Nevertheless, with our carefully designed right eigenvectors \eqref{regv}, all these $\frac{1}{H_2^2+H_3^2}$ factors are cancelled when calculating the coefficients $\gamma^i_{km}$ in Theorem \ref{structurethm}. The details of these cancellations will be demonstrated in Section \ref{structure} and Appendix \ref{appendixB}.
\end{remark}

We now introduce the characteristics as well as the associated characteristic and bi-characteristic coordinates. We define {\bf\textit{the $i^{\text{th}}$ characteristic $\mathcal{C}_i(z_i)$}}, which originates from $z_i$, to be the image $(X_i(z_i,t),t)$ of solutions to the following ODE:
\begin{align}\label{flow}
  \left\{\begin{array}{ll}
  \frac{\partial}{\partial t}X_i(z_i,t)=\lambda_i\big(\Phi(X_i(z_i,t),t)\big),\quad t\in[0,T],\\
  X_i(z_i,0)=z_i.
  \end{array}\right.
\end{align}
For any $(x,t)\in \mathbb{R}\times [0,T]$, there is a unique $(z_i,s_i)\in \mathbb{R}\times [0,T]$ such that $(x,t)=\big(X_i(z_i,s_i),s_i\big)$ with $X_i$ satisfying \eqref{flow}. We define this $(z_i,s_i)$ to be {\bf\textit{the characteristic coordinates}} of $(x,t)$. Moreover, we define bi-characteristic coordinates: consider two transversal characteristics $\mathcal{C}_i(z_i)$ and $\mathcal{C}_j(z_j)$ with $i\neq j$. Note that the characteristic $\mathcal{C}_i(z_i)$ can be parameterized by $z_j$. Thus given $(z_i,z_j)$ one can uniquely locate a point where the two characteristics $\mathcal{C}_i(z_i)$ and $\mathcal{C}_j(z_j)$ intersect. To distinguish with the characteristic coordinates above, we denote $(z_{i},z_{j})$ by $(y_{i},y_{j})$ in this new coordinate system. Then, given $(x,t)\in \mathbb{R}\times [0,T]$, {\bf\textit{the bi-characteristic coordinates}} of $(x,t)$ is defined to be the unique $(y_i,y_j)\in\mathbb{R}^2$ such that with $t=t'(y_i, y_j)$, we have
\begin{equation}\label{bichar}
  (x,t)=\big(X_i(y_i,t'(y_i,y_j)),t'(y_i,y_j)\big)=\big(X_j(y_j,t'(y_i,y_j)),t'(y_i,y_j)\big).
\end{equation}

To describe the compression among the $i^{\text{th}}$ characteristics, we introduce a geometric quantity $\rho_i$ (the inverse density of the $i^{\text{th}}$ characteristics)
\begin{equation}\label{dense}
\rho_i:=\partial_{z_i}X_i.
\end{equation}
Following from \eqref{flow}, for the initial data we have
\begin{equation}\label{319}
  \rho_i(z_i,0)=1,
\end{equation}
The coordinate transformations comply with the following rules:
\begin{equation}
  \partial_{z_i}=\rho_i\partial_x,\quad \partial_{s_i}=\lambda_i\partial_x+\partial_t,
\quad
  \partial_{y_i}t'=\frac{\rho_i}{\lambda_j-\lambda_i},\quad\partial_{y_j}t'=\frac{\rho_j}{\lambda_i-\lambda_j},
\end{equation}
\begin{equation}
  \partial_{y_i}=\frac{\rho_i}{\lambda_j-\lambda_i}\partial_{s_j}=\partial_{z_i}+\frac{\rho_i}{\lambda_j-\lambda_i}\partial_{s_i},
\end{equation}
and
\begin{equation}
  dx=\rho_idz_i+\lambda_ids_i,\quad dt=ds_i,\quad   dz_i=dy_i,\quad dz_j=dy_j,
\end{equation}
\begin{equation}
  dx=\frac{\rho_i\lambda_j}{\lambda_j-\lambda_i}dy_i+\frac{\rho_j\lambda_i}{\lambda_i-\lambda_j}dy_j,\quad dt=\frac{\rho_i}{\lambda_j-\lambda_i}dy_i+\frac{\rho_j}{\lambda_i-\lambda_j}dy_j.
\end{equation}

Now we are ready to decompose system \eqref{pMHD} via the decomposition of waves. See \cite{an,christodoulou,john74} for more applications of this method. For fixed $i$, setting
\begin{equation}\label{wi}
  w_i:=l_i\partial_x\Phi,
  \end{equation}
  and
  \begin{equation}
    v_i:=\rho_iw_i,
\end{equation}
then by \eqref{lr}, we have
\begin{equation}\label{phix}
  \partial_x\Phi=\sum_{k=1}^7 w_kr_k.
\end{equation}
The above equation serves as a \underline{decomposition} of $\partial_x \Phi$. By calculations similar to John \cite{john74}, one can verify that $\{\rho_i,w_i,v_i\}_{i=1,\cdots,7}$ satisfy the following transport equations
\begin{align}
  \partial_{s_i}\rho_i=&c_{ii}^iv_i+\Big(\sum_{m\neq i}c_{im}^iw_m\Big)\rho_i,\label{eqrho}\\
  \partial_{s_i}w_i=&-c_{ii}^iw_i^2+\Big(\sum_{m\neq i}(-c_{im}^i+\gamma_{im}^i)w_m\Big)w_i+\sum_{m\neq i,k\neq i\atop m\neq k}\gamma_{km}^iw_kw_m,\label{eqw}\\
  \partial_{s_i}v_i=&\Big(\sum_{m\neq i}\gamma_{im}^iw_m\Big)v_i+\sum_{m\neq i,k\neq i\atop m\neq k}\gamma_{km}^iw_kw_m\rho_i\label{eqv}
\end{align}
where $\partial_{s_i}=\lambda_i\partial_x+\partial_t$ and
\begin{align}
&c_{im}^i=\nabla_\Phi\lambda_i\cdot r_m,\label{coec}\\
  &\gamma_{im}^i=-(\lambda_i-\lambda_m)l_i \cdot(\nabla_\Phi r_i \cdot r_m-\nabla_\Phi r_m \cdot r_i),\quad m\neq i,\label{coeg1}\\
  &\gamma_{km}^i=-(\lambda_k-\lambda_m)l_i \cdot (\nabla_\Phi r_k \cdot r_m), \qquad\qquad\qquad k\neq i,\  m\neq i.\label{coeg2}
\end{align}
\begin{remark}
We call the $i^{\text{th}}$ characteristic {\bf \textit{genuinely nonlinear}} if $c_{ii}^i \neq 0$ for all $(x,t)$ and on the contrary {\bf \textit{linearly degenerate}} when $c_{ii}^i\equiv 0$.
\end{remark}
\begin{remark}
As will be proved in Theorem \ref{structurethm}, all the coefficients $c^i_{im}$, $\gamma^i_{im}$, $\gamma^i_{km}$ of the decomposed system \eqref{eqrho}-\eqref{eqv} are uniformly bounded of order $1$. The proof will be given in Section \ref{structure} and Appendix \ref{appendixB}. Moreover, important schematic structures of 3D compressible MHD are also exhibited via system \eqref{eqrho}-\eqref{eqv}.
\end{remark}

\section{Proof of Theorem \ref{structurethm}} \label{structure}

In this section, we explore the structures of the decomposed system \eqref{eqrho}-\eqref{eqv} and aim to prove Theorem \ref{structurethm}. These structures play a crucial role in proving the shock formation of \eqref{pMHD}.

We start from analyzing $w_1$. Our ultimate goal is to show that $w_1$ blows up in a finite time. Observe that in \eqref{eqw} the equation of $w_1$ is of Ricatti-type. In particular, with the following lemma we show that the $1^{\text{st}}$ characteristic is genuinely nonlinear.
\begin{lem}[{\bf Genuine nonlinearity of $\mathcal{C}_1$}] \label{gn}
For $|H_1| \ll 1$ and $|\Phi|\leq 2\delta \ll 1$, there holds $c_{11}^1(\Phi)<0$.
\end{lem}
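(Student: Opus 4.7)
The strategy is a direct computation based on the explicit formulas for $\lambda_1$ and $r_1$ in \eqref{egvl}, \eqref{regv}, combined with the smallness assumption \eqref{h1} on $H_1$. By definition \eqref{coec},
\begin{equation*}
c_{11}^1 = \nabla_\Phi \lambda_1 \cdot r_1,
\end{equation*}
and since $\lambda_1 = u_1 + C_f$ with $C_f = C_f(\varrho,H_2,H_3,S)$ depending only on the thermodynamic and magnetic variables, the only nonzero partial derivatives contributing are $\partial_{u_1}\lambda_1 = 1$, $\partial_\varrho C_f$, $\partial_{H_2}C_f$, $\partial_{H_3}C_f$, and $\partial_S C_f$. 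Using that $(r_1)_7 = 0$, this reduces to
\begin{equation*}
c_{11}^1 = (r_1)_1 + (\partial_\varrho C_f)(r_1)_4 + (\partial_{H_2} C_f)(r_1)_5 + (\partial_{H_3} C_f)(r_1)_6.
\end{equation*}

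The first step is to evaluate $c_{11}^1$ at $\Phi = 0$. Here $\varrho = 1$, $S = 0$, and $H_2 = H_3 = 0$, which forces $C_a^2 = \mu_0 H_1^2$ and $c_0^2 = A\gamma$. By the smallness \eqref{h1}, $\mu_0 H_1^2 < c_0^2$, so from the formula \eqref{charac speeds} one has $C_f^2|_{\Phi=0} = c_0^2$ and the discriminant in $C_f$ is strictly positive; hence $C_f$ is smooth near $\Phi = 0$. The terms $(r_1)_5 = -H_2$ and $(r_1)_6 = -H_3$ vanish at $\Phi = 0$, so the middle two contributions drop out. Computing $\partial_\varrho C_f^2$ from the state equation \eqref{state} (along $H_2 = H_3 = 0$, in the regime $c^2 > \mu_0 H_1^2/\varrho$, where $C_f^2 = c^2$) gives $\partial_\varrho C_f|_{\Phi=0} = \tfrac{(\gamma-1)c_0}{2}$, and one reads off $(r_1)_1|_{\Phi=0} = \tfrac{\mu_0 H_1^2 - c_0^2}{c_0}$ and $(r_1)_4|_{\Phi=0} = \tfrac{\mu_0 H_1^2 - c_0^2}{c_0^2}$. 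Combining,
\begin{equation*}
c_{11}^1\big|_{\Phi=0} = \frac{\mu_0 H_1^2 - c_0^2}{c_0}\cdot \frac{\gamma+1}{2} < 0,
\end{equation*}
where strict negativity uses the assumption $\mu_0 H_1^2 < A\gamma = c_0^2$.

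The second step is to extend the sign to all $|\Phi| \leq 2\delta$ with $\delta$ sufficiently small. Since the discriminant $[\tfrac{\mu_0(H_1^2+H_2^2+H_3^2)}{\varrho}-c^2]^2 + \tfrac{4\mu_0(H_2^2+H_3^2)c^2}{\varrho}$ is bounded below by a positive constant in a neighborhood of $\Phi = 0$ (again thanks to \eqref{h1}), $C_f$ is a smooth function of $\Phi$ there, and hence so are $\nabla_\Phi \lambda_1$ and $r_1$. Continuity of $c_{11}^1$ in $\Phi$ together with the strict negativity at $\Phi = 0$ gives $c_{11}^1(\Phi) < 0$ for all $|\Phi| \leq 2\delta$, provided $\delta$ is small enough. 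The only subtle point is to verify that the nondegeneracy of the square-root discriminant in the formula for $C_f$ is preserved under the smallness condition \eqref{h1}; this is the main (minor) obstacle, and it is handled by a direct algebraic lower bound on the discriminant.
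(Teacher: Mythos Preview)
Your proof is correct and follows essentially the same strategy as the paper: evaluate $c_{11}^1$ explicitly at $\Phi=0$, verify strict negativity, and then propagate the sign by continuity. The only cosmetic difference is that the paper further sets $H_1=0$ (using continuity in the parameter $H_1$ as well) to obtain the cleaner value $c_{11}^1(0)=-\tfrac{\sqrt{A\gamma}(\gamma+1)}{2}$, whereas you keep $H_1$ fixed and small and arrive at $c_{11}^1|_{\Phi=0}=\tfrac{(\mu_0 H_1^2 - c_0^2)(\gamma+1)}{2c_0}$; your version is slightly more direct since it avoids the extra limit in $H_1$, and it specializes to the paper's value when $H_1=0$.
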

\begin{proof}
Plugging the formula of $r_1$ \eqref{regv}, $\lambda_1$ \eqref{egvl}, the fast and the Alfv\'en wave speeds $C_f$, $C_a$ \eqref{charac speeds} into the definition of $c^1_{11}(\Phi)$ \eqref{coec}, one can verify that $c^1_{11}(\Phi)$ is continuous with respect to variable $\Phi$ and parameter $H_1$. Since $|H_1| \ll 1$ and $|\Phi|\leq 2\delta \ll 1$, it then suffices to show that $c_{11}^1(0)<0$ for the case of $H_1=0$. With $H_1=0$, it holds that
\begin{equation}
\lambda_1=u_1+\big\{\frac{\mu_0}{\varrho}(H_2^2+H_3^2)+A\gamma\varrho^{\gamma-1}\exp S\big\}^{1/2}.
\end{equation}
We hence obtain
\begin{equation}
\begin{split}
c^1_{11(0)}(0)=&\nabla_\Phi\lambda_1(0)\cdot r_1(0)\\
=&(1,0,0,\frac{\gamma-1}{2}\sqrt{A\gamma},0,0,\frac{\sqrt{A\gamma}}{2})\cdot (-\sqrt{A\gamma},0,0,-1,0,0,0)^T\\
=&-\frac{\sqrt{A\gamma}(\gamma+1)}{2}<0.
\end{split}
\end{equation}
Using continuity, we also conclude that $c_{11}^1(\Phi)<0$.
\end{proof}
Note that $c_{11}^1(\Phi)<0$ is important for the Riccati-type equation \eqref{eqw} of $w_1$ to develop a potential singularity.

For $w_2,w_6$, the following lemma indicates the vanishing of the Riccati-type terms.
\begin{lem} \label{vanishing}
For $c_{i2}^i$ and $c_{i6}^i$ defined in \eqref{coec}, it holds that
\begin{equation}
c_{i2}^i=0,\quad c_{i6}^i=0\quad \text{for}\quad i=1,\cdots,7.
\end{equation}
\end{lem}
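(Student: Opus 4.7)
The plan is to exploit two structural facts that make the inner products $\nabla_\Phi\lambda_i\cdot r_2$ and $\nabla_\Phi\lambda_i\cdot r_6$ collapse to zero for every $i$.

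First I would identify which components of $\nabla_\Phi \lambda_i$ can possibly matter. Unpacking the eigenvalues \eqref{egvl}, each $\lambda_i$ has the form $u_1 \pm C$, where $C$ is one of $C_f, C_s, C_a$ or identically zero, and (inspecting \eqref{charac speeds}) $C_f, C_s, C_a$ depend on $\Phi$ only via $\varrho$, $S$, and the combination $H_2^2+H_3^2$. Hence, for every $i\in\{1,\dots,7\}$,
\begin{equation*}
\partial_{u_2}\lambda_i = \partial_{u_3}\lambda_i = 0,\qquad H_3\,\partial_{H_2}\lambda_i \;=\; H_2\,\partial_{H_3}\lambda_i.
\end{equation*}
The second identity is immediate once one writes $\lambda_i$ as a function of $(u_1,\varrho,S,H_2^2+H_3^2)$ and applies the chain rule. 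This is the rotational symmetry of the MHD wave speeds in the $(H_2,H_3)$-plane, and it is the only non-trivial input for the lemma.

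Next I would read off the structure of $r_2$ and $r_6$ from \eqref{regv}: both have vanishing first, fourth, and seventh components, so the only positions that can contribute to $\nabla_\Phi\lambda_i\cdot r_m$ are the $u_2,u_3,H_2,H_3$ slots. Since $\partial_{u_2}\lambda_i=\partial_{u_3}\lambda_i=0$, the $u_2,u_3$ slots contribute nothing, and we are left with
\begin{equation*}
\nabla_\Phi\lambda_i\cdot r_2 \;=\; -H_3\,\partial_{H_2}\lambda_i + H_2\,\partial_{H_3}\lambda_i,\qquad
\nabla_\Phi\lambda_i\cdot r_6 \;=\; \phantom{-}H_3\,\partial_{H_2}\lambda_i - H_2\,\partial_{H_3}\lambda_i,
\end{equation*}
both of which vanish by the rotational identity above. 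By the definition \eqref{coec} of $c^i_{im}$, this gives $c^i_{i2}=c^i_{i6}=0$ for every $i$, completing the proof.

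I do not expect any real obstacle here: the lemma is essentially a consequence of the rotational symmetry of $C_f,C_s,C_a$ in $(H_2,H_3)$ combined with the sparsity pattern of the designed eigenvectors $r_2,r_6$. The only care needed is to verify the symmetry directly from \eqref{charac speeds}, which is a one-line chain rule computation; everything else is pattern matching against \eqref{regv}.
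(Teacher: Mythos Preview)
Your proposal is correct and follows essentially the same approach as the paper: both arguments reduce $c^i_{i2}$ and $c^i_{i6}$ to the $(H_2,H_3)$-slots using $\partial_{u_2}\lambda_i=\partial_{u_3}\lambda_i=0$, and then exploit that each $\lambda_i$ depends on $(H_2,H_3)$ only through $H_2^2+H_3^2$ to cancel the remaining terms. Your framing via the rotational identity $H_3\,\partial_{H_2}\lambda_i = H_2\,\partial_{H_3}\lambda_i$ is a slightly cleaner way of phrasing the same chain-rule computation that the paper carries out explicitly.
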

\begin{proof}
Recall the definition \eqref{coec}
$$c_{i2}^i=\nabla_{\Phi} \lambda_i\cdot r_2.$$
Invoking the right eigenvector $r_2$ in Section \ref{pre}, we have
\begin{equation} \label{cii2}
c_{i2}^i=\sqrt{\frac{\mu_0}{\varrho}}\frac{H_3}{H_2}\partial_{u_2}\lambda_i-\sqrt{\frac{\mu_0}{\varrho}}\partial_{u_3}\lambda_i-\frac{H_3}{H_2}\partial_{H_2}\lambda_i+\partial_{H_3}\lambda_i.
\end{equation}
Since the eigenvalues $\lambda_i$ are independent of $u_2$ and $u_3$, it holds that
\begin{equation}
c_{i2}^i=-\frac{H_3}{H_2}\partial_{H_2}\lambda_i+\partial_{H_3}\lambda_i.
\end{equation}
Observe that $\lambda_i$ in \eqref{egvl} can be written as
$$\lambda_i=f_i(u_1,\rho,H_2^2+H_3^2,S).$$
Substituting it back in \eqref{cii2}, we then prove that the coefficients $c_{i2}^i$ satisfy
\begin{equation}
c_{i2}^i=-\frac{H_3}{H_2}\partial_{H_2^2+H_3^2} f_i\cdot 2H_2+\partial_{H_2^2+H_3^2} f_i\cdot2H_3=0.
\end{equation}
In the same fashion, we deduce $c_{i6}^i=0$.
\end{proof}
The above lemma immediately implies that $c^2_{22}=0$ and $c^6_{66}=0$. Moreover, by a straightforward calculation, one can easily see that $c^4_{44}=0$. Hence the $2^{\text{nd}},4^{\text{th}},6^{\text{th}}$ characteristics are linearly degenerate.
\begin{remark}
For $c^i_{ik}$, $\gamma^i_{im}$, $\gamma^i_{km}$ with other indices, we need to prove that they are $O(1)$. This is achieved via conducting a heavy calculation of checking all the scenarios and exploring the delicate cancellations among them. A 14-page detailed calculation is attached in Appendix \ref{appendixB}.
\end{remark}
The main proposition in this section is as follows.
\begin{prop}[{\bf Estimates of coefficients}] \label{coeff}
For $|\Phi|\leq 2\delta\ll 1$, the decomposed system \eqref{eqrho}-\eqref{eqv} admits uniformly bounded non-trivial coefficients $c^i_{ii}$, $c^i_{im}$, $\gamma^i_{im}$, $\gamma^i_{km}$, i.e.,
\begin{equation}
\max\limits_{i,k,m}\sup\limits_{|\Phi|\leq 2\delta}\big\{|c^i_{ii}|,|c^i_{im}|,|\gamma^i_{im}|,|\gamma^i_{km}|\big\}\leq C,
\end{equation}
where $C$ is a uniform constant.
\end{prop}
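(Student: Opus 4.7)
The plan is to verify the uniform bound coefficient-by-coefficient via explicit computation, exploiting the cancellation mechanism designed into the choice of right and left eigenvectors \eqref{regv}-\eqref{legv}. The conceptual heart of the argument is the following observation: the left eigenvectors $l_i$ for $i \neq 4$ carry a potentially singular factor $\frac{1}{H_2^2+H_3^2}$ in their second, third, fifth, and sixth components, but the right eigenvectors $r_m$ and their derivatives $\nabla_\Phi r_m$ have been arranged so that the relevant vector $\nabla_\Phi r_k \cdot r_m$ (or $\nabla_\Phi \lambda_i$) always produces, in its second-third-fifth-sixth entries, a combination proportional to $H_2$ and $H_3$ linearly. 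When dotted against $l_i$, these linear factors combine with the numerators of $l_i$ (which also carry $H_2$ or $H_3$) to give exactly one factor of $H_2^2+H_3^2$ that cancels the denominator.

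First I would reduce to a finite list of cases. Lemma \ref{vanishing} eliminates the coefficients $c^i_{i2}$ and $c^i_{i6}$; an analogous straightforward check disposes of $c^4_{44}=0$. Lemma \ref{gn} handles $c^1_{11}$. For the remaining $c^i_{im}$, the computation is trivial: since $\lambda_i = f_i(u_1,\varrho,H_2^2+H_3^2,S)$, the gradient $\nabla_\Phi \lambda_i$ has no $\frac{1}{H_2^2+H_3^2}$ issues at all, and pairing with $r_m$ from \eqref{regv} gives smooth functions of $(u_1,\varrho,H_2^2+H_3^2,S)$ and $H_1$.

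The main work, and the main obstacle, is bounding the $\gamma^i_{im}$ and $\gamma^i_{km}$ coefficients. For each ordered triple $(i,k,m)$ with $k\neq i$ and $m\neq i$, I would compute $\nabla_\Phi r_k \cdot r_m$ as a column vector in $\mathbb{R}^7$, then multiply by the row vector $l_i$. The strategy for each calculation is the same: organize the second, third, fifth, and sixth components of $\nabla_\Phi r_k \cdot r_m$ together, factor out $H_2$ and $H_3$, and observe that pairing with the corresponding entries of $l_i$ produces a term of the form $\frac{aH_2^2 + bH_3^2}{H_2^2+H_3^2}$ with $a=b$, which reduces to a constant. What remains after the cancellation is a continuous (in fact smooth) function of $C_f, C_s, C_a, \varrho, \gamma, H_1$ on the compact set $\{|\Phi|\leq 2\delta\}$, hence uniformly bounded.

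Once cancellation of the singular denominator is established in every case, the uniform bound follows by continuity and compactness. Since there are 299 such coefficient expressions to verify, each requiring the identification of the specific algebraic cancellation, the actual execution is long but mechanical; I would defer the full tabulation to Appendix \ref{appendixB} and only exhibit a representative computation (for instance $\gamma^2_{64}$, which was already sketched in Subsection \ref{difficulty} and illustrates the cancellation $\frac{H_3^2}{H_2^2+H_3^2}+\frac{H_2^2}{H_2^2+H_3^2}=1$) in the main text. The hard part is not any single coefficient but the bookkeeping: one must confirm the cancellation does not fail in any one of the 299 instances, as a single failure would invalidate the decomposition framework used throughout the paper.
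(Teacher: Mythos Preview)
Your proposal is correct and follows essentially the same approach as the paper: explicit coefficient-by-coefficient verification that the singular factor $\frac{1}{H_2^2+H_3^2}$ in $l_i$ always cancels, a representative computation of $\gamma^2_{64}$ in the main text, and the full tabulation deferred to Appendix \ref{appendixB}. The only refinement the paper adds is that the cancellation comes in two flavors---your type $\frac{H_2^2+H_3^2}{H_2^2+H_3^2}=1$ (illustrated by $\gamma^2_{64}$) and a second type $\frac{H_2H_3-H_3H_2}{H_2^2+H_3^2}=0$ (illustrated by $\gamma^2_{26}$)---so you will encounter both patterns when carrying out the full list of checks, not just the $a=b$ pattern you describe.
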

\begin{proof}
Let $\Gamma(\Phi)$ be the maximum of all the coefficients $\{|c_{im}^i|\}$ and $\{|\gamma_{km}^i|\}$ in \eqref{coec}-\eqref{coeg2}. With the eigenvalues and eigenvectors in Section \ref{pre}, we will show
\begin{equation}\label{order1}
  \Gamma(\Phi)=O(1) \qquad \text{for} \qquad |\Phi|\leq 2\delta\ll 1.
\end{equation}
This is clearly true for $c^i_{im}$, $\gamma^4_{4m}$ and $\gamma^4_{km}$ as $\nabla_\Phi \lambda_i$, $r_i$ and $l_4$ contain only $O(1)$ terms. But for $\gamma^i_{im},\gamma^i_{km}$ with $i \neq 4$, they might be singular since $\frac{1}{H_2^2+H_3^2}$ appears in the corresponding left eigenvectors. Then these potentially singular coefficients could destroy all the arguments we construct. We must calculate \underline{all} these coefficients concretely to eliminate this danger. After a thorough 14-page calculation, we find that all the potentially singular denominators are cancelled in the matrix multiplication. Whence, \eqref{order1} holds for all the coefficients (See Appendix \ref{appendixB}).

We begin with a concrete calculations for $\gamma_{64}^2$. With
\begin{equation*}
\begin{split}
&r_2=\left(\begin{array}{cc} 0\\ \frac{C_a H_3}{H_1}\\ -\frac{C_a H_2}{H_1}\\0\\-H_3\\H_2\\0\end{array}\right),\ r_4=\left(\begin{array}{cc} 0\\ 0\\ 0\\-\frac{\varrho}{\gamma}\\0\\0\\1\end{array}\right),\ r_6=\left(\begin{array}{cc} 0\\ \frac{C_a H_3}{H_1}\\ -\frac{C_a H_2}{H_1}\\0\\H_3\\-H_2\\0\end{array}\right)\,\,\text{and}
\end{split}
\end{equation*}
\begin{equation*}
l_2=\Big(0,\frac{H_1H_3}{2C_a(\text{\dashuline{$H_2^2+H_3^2$}})},-\frac{H_2H_1}{2C_a(\text{\dashuline{$H_2^2+H_3^2$}})},0,-\frac{H_3}{2(\text{\dashuline{$H_2^2+H_3^2$}})},\frac{H_2}{2(\text{\dashuline{$H_2^2+H_3^2$}})},0\Big),\\
\end{equation*}
we have
\begin{equation*}
  \nabla_\Phi r_6 \cdot r_4=\Big(0,\frac{C_a H_3}{2\gamma H_1},-\frac{C_a H_2}{2\gamma H_1},0,0,0,0\Big)^\top,
\end{equation*}
and hence
\begin{eqnarray*}
\gamma_{64}^2&=&-(\lambda_6-\lambda_4)l^2 \cdot (\nabla_\Phi r_6 \cdot r_4)\\
&=&-(\lambda_6-\lambda_4)\Big(\frac{H_3^2}{4\gamma(H_2^2+H_3^2)}+\frac{H_2^2}{4\gamma(H_2^2+H_3^2)}\Big)\\
&\overset{(*)}{=}&-\frac{\lambda_6-\lambda_4}{4\gamma}=O(1).
\end{eqnarray*}
Note the specific cancellation in equality $(*)$.

We use $\gamma^2_{26}$ to demonstrate another type of cancellation. Proceed as above and it follows
\begin{equation*}
  \nabla_\Phi r_2 \cdot r_6-\nabla_\Phi r_6 \cdot r_2=\Big(0,-\frac{2C_a H_2}{H_1},-\frac{2C_a H_3}{H_1},0,0,0,0\Big)^\top.
\end{equation*}
Thus
\begin{eqnarray*}
\gamma_{26}^2&=&-(\lambda_2-\lambda_6)l^2 \cdot (\nabla_\Phi r_2 \cdot r_6-\nabla_\Phi r_6 \cdot r_2)\\
&=&(\lambda_2-\lambda_6)\Big(\frac{H_3 H_2}{H_2^2+H_3^2}-\frac{H_2 H_3}{H_2^2+H_3^2}\Big){\overset{(**)}{=}}0.
\end{eqnarray*}
Note the crucial cancellation in equality $(**)$. The computation of other coefficients, i.e., $\gamma^i_{im},\gamma^i_{km}$ with $i \neq 4$, can be conducted similarly but with more involved expressions. Yet for any $\gamma^i_{km}$, the \underline{cancellation} of singular factor $\frac{1}{H_2^2+H_3^2}$ is either in the form of $(*)$ or in the form of $(**)$. The detailed calculations can be found in a 14-page Appendix \ref{appendixB}. These estimates critically reveal the subtle structures of our system.
\end{proof}

Now with all the uniformly bounded coefficients at our disposal, we obtain the structures of the system as listed in \eqref{infmrho}-\eqref{infmv}. This concludes the proof of Theorem \ref{structurethm}.

\section{Proof of shock formation (Theorem \ref{shock})} \label{pfthm1.2}
We prove shock formation for the 3D compressible MHD in this section. Firstly, we deduce proper a priori estimates (Subsection \ref{apes}). These delicate estimates act as a powerful tool to cope with the strong nonlinear interaction terms. Next, employing the a priori estimates, we set up and close a bootstrap argument (Subsection \ref{bootargu}). Finally, the shock formation follows from the $L^\infty$-estimates obtained via the bootstrap argument (Subsection \ref{pf1.2}).

\subsection{A priori estimates}\label{apes}
In this subsection, we derive the main a priori estimates. We firstly define the characteristic strips and a separating time $t_0^{(\eta)}$. Beyond $t_0^{(\eta)}$ the strips under consideration will be completely separated. The a priori estimates will also be divided into two parts: before and after $t_0^{(\eta)}$. These estimates provide a manifestation of how to control the nonlinear interactions of waves with different or almost-the-same characteristic speeds.

Let $\mathcal{C}_i(z_i)$ be the $i^{\text{th}}$ characteristic issuing from $z_i$ with propagation speed $\lambda_i$. Define the corresponding $i^{\text{th}}$ characteristic strip
\begin{equation} \label{strip}
\mathcal{R}_i:=\cup_{z_i\in I_0}\mathcal{C}_i(z_i),
\end{equation}
where $I_0=\{x:\eta\leq x\leq 2\eta\}$ is the compact support of the initial data and $\eta$ is a chosen positive parameter. We will require $\eta$ to be small to drive the ill-posedness. See Section \ref{illdata}-\ref{ill}. Yet for the purpose of shock formation in this section, $\eta$ is \underline{not} necessary to be small.

For the planar symmetric 3D ideal MHD system \eqref{pMHD}, two pairs of characteristics are of almost the same travelling speed. This is because $H_2^2+H_3^2$ is sufficiently small, and from \eqref{egvl}\eqref{charac speeds} it follows
      \begin{equation*}
   \lambda_2\thickapprox\lambda_3,\ \lambda_5\thickapprox\lambda_6.
      \end{equation*}
 This is illustrated by the long-time overlapping of the corresponding characteristic strips $\mathcal{R}_2,\mathcal{R}_3$ or $\mathcal{R}_5,\mathcal{R}_6$ in the picture below. Our strategy is to take unions of these overlapped strips and consider the propagation of waves within the following five characteristic strips:
 \begin{equation}\label{5strip}
\{\mathcal{R}_1,\mathcal{R}_{\bar{2}},\mathcal{R}_4,\mathcal{R}_{\bar{5}},\mathcal{R}_7\},
  \end{equation}
where $\mathcal{R}_{\bar{2}}:=\mathcal{R}_2\bigcup\mathcal{R}_3$ and $\mathcal{R}_{\bar{5}}:=\mathcal{R}_5\bigcup\mathcal{R}_6$. These five strips will be completely separated after a finite separating time $t_0^{(\eta)}$.
\begin{figure}[H]
\centering
\begin{tikzpicture}[fill opacity=0.5, draw opacity=1, text opacity=1]
\node [below]at(3.5,0){$2\eta$};
\node [below]at(2.3,0){$\eta$};

\filldraw[white, fill=gray!40] (3.5,0)..controls (2,1) and (1,2)..(-0.8,3.5)--(0.4,3.5)..controls (1,2.8) and (1.9,2)..(2.5,0);
\filldraw[white, fill=gray!40](3.5,0)..controls (3.2,1) and (2,2.6)..(1.3,3.5)--(3.5,3.5)..controls (2.2,1.5) and (2.6,1)..(2.5,0);
\filldraw[white, fill=gray!40](3.5,0)..controls (3.4,1) and (3,1.5)..(4.5,3.5)--(5.5,3.5)..controls (3.2,1.5) and (2.8,1)..(2.5,0);
\filldraw[white, fill=gray!40](3.5,0)..controls (3.8,1) and (4,1.5)..(6.6,3.5)--(8.5,3.5)..controls (3.9,1.5) and (3.5,1)..(2.5,0);

\filldraw[white, fill=gray!80] (2.5,0)..controls (3.5,1) and (3.9,1.5)..(8.5,3.5)--(10,3.5)..controls (5.5,1.5) and (4,0.5)..(3.5,0)--(2.5,0);
\filldraw[white,fill=gray!80](2.5,0)..controls (2.8,1) and (3.2,1.5)..(5.5,3.5)--(6.6,3.5)..controls (4,1.5) and (3.8,1)..(3.5,0);
\filldraw[white,fill=gray!80](2.5,0)..controls (2.6,1) and (2.2,1.5)..(3.5,3.5)--(4.5,3.5)..controls (3,1.5) and (3.4,1)..(3.5,0);
\filldraw[white,fill=gray!80](2.5,0)..controls (1.9,2) and (1,2.8)..(0.4,3.5)--(1.3,3.5)..controls (2,2.6) and (3.2,1)..(3.5,0);
\filldraw[white,fill=gray!80](2.5,0)..controls (1,1) and (0.5,1.8)..(-1.8,3.5)--(-0.8,3.5)..controls (1,2) and (2,1)..(3.5,0);
\draw[->](-1.8,0)--(10,0)node[left,below]{$t=0$};
\draw[dashed](-1.8,1.7)--(10,1.7)node[right,below]{$t=t_0^{(\eta)}$};

\draw (3.5,0)..controls (4,0.5) and (5.5,1.5)..(10,3.5);

\draw (2.5,0)..controls (3.5,1) and (3.9,1.5)..(8.5,3.5);
\node [below] at(7.6,3){$\mathcal{R}_1$};

\draw [color=black](3.5,0)..controls (3.8,1) and (4,1.5)..(6.6,3.5);

\draw [color=black](2.5,0)..controls (2.8,1) and (3.2,1.5)..(5.5,3.5);
\node [below] at(5.2,3){$\mathcal{R}_2\cup\mathcal{R}_3$};

\draw [color=black](3.5,0)..controls (3.4,1) and (3,1.5)..(4.5,3.5);

\draw [color=black](2.5,0)..controls (2.6,1) and (2.2,1.5)..(3.5,3.5);
\node [below] at(3.5,3){$\mathcal{R}_4$};

\draw [color=black](3.5,0)..controls (3.2,1) and (2,2.6)..(1.3,3.5);
\node [below] at(1.6,3){$\mathcal{R}_5\cup\mathcal{R}_6$};

\draw [color=black] (2.5,0)..controls (1.9,2) and (1,2.8)..(0.4,3.5);

\draw [color=black](3.5,0)..controls (2,1) and (1,2)..(-0.8,3.5);
\node [below] at(-0.2,3){$\mathcal{R}_{7}$};

\draw [color=black] (2.5,0)..controls (1,1) and (0.5,1.8)..(-1.8,3.5);
\end{tikzpicture}
\end{figure}
Now via estimates we first determine the separating time $t_0^{(\eta)}$. Take the supremum and infimum of the eigenvalues
\begin{align*}
  &\bar{\lambda}_i:=\sup_{\Phi\in B_{2\delta}^7 (0)}\lambda_i(\Phi),\quad \underline{\lambda}_i:=\inf_{\Phi\in B_{2\delta}^7(0)}\lambda_i(\Phi),\quad \text{for}\quad i=1,4,7,\\
&\bar{\lambda}_{\bar{2}}:=\sup_{\Phi\in B_{2\delta}^7(0)}\{\lambda_2(\Phi),\lambda_3(\Phi)\},\quad \underline{\lambda}_{\bar{2}}:=\inf_{\Phi\in B_{2\delta}^7(0)}\{\lambda_2(\Phi),\lambda_3(\Phi)\},\\
&\bar{\lambda}_{\bar{5}}:=\sup_{\Phi\in B_{2\delta}^7(0)}\{\lambda_5(\Phi),\lambda_6(\Phi)\},\quad \underline{\lambda}_{\bar{5}}:=\inf_{\Phi\in B_{2\delta}^7(0)}\{\lambda_5(\Phi),\lambda_6(\Phi)\},
\end{align*}
and define
\begin{equation*}
  \sigma:=\min_{\alpha<\beta\atop \alpha,\beta\in\{1,\bar{2},4,\bar{5},7\}}(\underline{\lambda}_\alpha-\bar{\lambda}_\beta).
\end{equation*}
According to \eqref{egvl}\eqref{charac speeds}\eqref{negv}, when $\delta$ is sufficiently small, $\sigma$ has a uniform positive lower bound.
For $\alpha\in\{1,\bar{2},4,\bar{5},7\}$, $z\in I_0$, it follows from \eqref{flow} that
\begin{equation*}
  z+\underline{\lambda}_\alpha t\leq X_\alpha(z,t)\leq z+\bar{\lambda}_\alpha t.
\end{equation*}
Moreover, for all $\alpha<\beta$, with $\alpha,\beta\in\{1,\bar{2},4,\bar{5},7\}$, there holds
\begin{equation*}
   X_\alpha(\eta,t)-X_\beta(2\eta,t) \geq (\eta+\underline{\lambda}_\alpha t)-(2\eta+\bar{\lambda}_\beta t)=-\eta+(\underline{\lambda}_\alpha-\bar{\lambda}_\beta)t\geq-\eta+\sigma t.
\end{equation*}
Note that the above difference is strictly positive when
\begin{equation}\label{t0}
  t>t_0^{(\eta)}:=\frac{\eta}\sigma>0.
\end{equation}
This implies that the five characteristic strips are well separated after $t_0^{(\eta)}$.

Let $\theta$ be a small parameter obeying $0<\theta\ll1$. We will prescribe initial data of order $O(\theta)$ and define
\begin{equation}\label{W0}
  W_0^{(\eta)}:=\max_{i=1,\cdots,7}\sup_{z_i}|w_i^{(\eta)}(z_i,0)|.
\end{equation}
We require the initial data satisfying the following conditions
\begin{equation}\label{Wr}
\begin{split}
&W_0^{(\eta)}=\sup_{z_1}|w_1^{(\eta)}(z_1,0)|=O(\theta),\\
&\max_{i=2,\cdots,7}\sup_{z_i}|w_i^{(\eta)}(z_i,0)|\leq C\eta( W_0^{(\eta)})^2.
\end{split}
\end{equation}
Next, we introduce the main quantities\footnote{For notational simplicity, we suppress the index $\eta$ in the solutions.}  to be estimated in our proof:
\begin{align}
  S_i(t):=&\sup_{(z'_i,s'_i)\atop z'_i\in[\eta,2\eta],\ 0\leq s'_i\leq t}\rho_i(z'_i,s'_i),&S(t):=&\max_{i}S_i(t),\label{norm1}\\
  J_i(t):=&\sup_{(z'_i,s'_i)\atop z'_i\in[\eta,2\eta]\ 0\leq s'_i\leq t}|v_i(z'_i,s'_i)|,&J(t):=&\max_{i}J_i(t),\label{norm2}\\
  W(t):=&\max_i\sup_{(x',t')\atop 0\leq t'\leq t}|w_i(x',t')|,&\bar{U}(t):=&\sup_{(x',t')\atop 0\leq t'\leq t}|\Phi(x',t')|,\label{norm3}
\end{align}
\begin{align*}
  & V_i(t):=\sup_{(x',t')\notin\mathcal{R}_i,\atop 0\leq t'\leq t}|w_i(x',t')|\quad \text{for} \quad i=1,4,7,\\
  &V_{\bar{2}}(t):=\max\sup_{(x',t')\notin\mathcal{R}_2\cup\mathcal{R}_3,\atop 0\leq t'\leq t}\{|w_2(x',t')|,|w_3(x',t')|\},\\
 &V_{\bar{5}}(t):=\max\sup_{(x',t')\notin\mathcal{R}_5\cup\mathcal{R}_6,\atop 0\leq t'\leq t}\{|w_5(x',t')|,|w_6(x',t')|\},
\end{align*}
\begin{equation}\label{normv}
V(t):=\max_{i}V_i(t)\quad \text{for} \ i=1,\bar{2},4,\bar{5},7.
\end{equation}
Furthermore, in order to deal with the potentially harmful terms in \eqref{abw2}-\eqref{abr6}, we need to estimate the following upper bound of $w_2,w_3$ (or $w_5,w_6$) in the unions of the overlapped characteristic strips $\mathcal{R}_2\cup\mathcal{R}_3$ (or $\mathcal{R}_5\cup\mathcal{R}_6$),
\begin{equation} \label{new norm}
\check{W}(t):=\max\Big\{\sup_{(x',t')\in\mathcal{R}_2\cup\mathcal{R}_3,\atop 0\leq t'\leq t}\{w_2,w_3\},\sup_{(x',t')\in\mathcal{R}_5\cup\mathcal{R}_6,\atop 0\leq t'\leq t}\{w_5,w_6\}\Big\}.
\end{equation}

{\bf $\bullet$ Estimates in the region $[0,t_0^{(\eta)}]$:} In the region $t\leq t_0^{(\eta)}$, the characteristic strips $\mathcal{R}_i$ are overlapped and are not separated. Nevertheless, the inverse densities of characteristics still have positive lower bounds in this region. Thus, no shock happens. In below, for $W(t)$, $V(t)$, $S(t)$, $J(t)$ and $\bar{U}(t)$ defined in \eqref{norm1}-\eqref{normv}, we derive a priori estimates for them before $t_0^{(\eta)}$.

{{\bf Step 1.} \textit{Estimate of $W(t)$.}} With $t\in[0,t_0^{(\eta)}]$ by \eqref{eqw} we have
\begin{equation*}
  \frac{\partial}{\partial s_i}|w_i|\leq \Gamma W^2.
\end{equation*}
As in \cite{christodoulou}, via comparing $w_i$ with solutions to the following ODE
\begin{equation*}
  \left\{\begin{array}{ll}
  \frac{d}{dt}Y=\Gamma Y^2,\\
  Y(0)=W_0^{(\eta)},
  \end{array}
  \right.
\end{equation*}
we obtain
\begin{equation}\label{wt0}
  |w_i|\leq Y(t)=\frac{W_0^{(\eta)}}{1-\Gamma W_0^{(\eta)} t}\quad \text{for}\quad t<\min\Big\{\frac1{\Gamma W_0^{(\eta)}},t_0^{(\eta)}\Big\}.
\end{equation}
And by \eqref{t0}\eqref{order1}, we have
\begin{equation}\label{gwt}
  \Gamma W_0^{(\eta)} t_0^{(\eta)}=O(\eta W_0^{(\eta)})=O(\theta).
\end{equation}
Applying \eqref{gwt} to \eqref{wt0}, for a small parameter $\varepsilon\in(0,\frac1{100}]$ it holds that
\begin{equation*}
  |w_i(x,t)|\leq (1+\varepsilon)W_0^{(\eta)}\quad \text{for any}\ x\in\mathbb{R}\ \text{and}\ t\in[0,t_0^{(\eta)}].
\end{equation*}
According to \eqref{norm3}, this implies that
\begin{equation}\label{y15}
  |W(t)|\leq (1+\varepsilon)W_0^{(\eta)}\quad \text{for}\  t\in[0,t_0^{(\eta)}].
\end{equation}

{{\bf Step 2.} \textit{Estimate of $S(t)$.}} From \eqref{eqrho} and \eqref{order1}, $\rho_i$ satisfies
\begin{equation*}
  \frac{\partial\rho_i}{\partial s_i}=O(\rho_iW).
\end{equation*}
Integrating the above equation along $\mathcal{C}_i$, we deduce
\begin{equation}\label{rho i}
  \rho_i(z_i,t)=\rho_i(z_i,0)\exp\big(O(tW(t))\big).
\end{equation}
Then plugging \eqref{319} into \eqref{rho i}, one can see that $\rho_i(z_i,t)$ is always positive. Moreover, invoking \eqref{t0} and \eqref{y15}, it follows that
\begin{equation}\label{320}
  \rho_i(z_i,t)=\exp\big(O(\eta W_0^{(\eta)})\big),\quad\text{for}\ t\in[0,t_0^{(\eta)}].
\end{equation}
We can further choose sufficiently small $\theta$ such that
\begin{equation}\label{321}
  1-\varepsilon\leq\exp\big(O(\eta W_0^{(\eta)})\big)\leq1+\varepsilon.
\end{equation}
Thus, for $t \in[0,t_0^{(\eta)}]$, we have that $\rho_i$ obeys the following lower and upper bounds
\begin{equation}\label{y16}
 1-\varepsilon\leq\rho_i(z_i,t)\leq1+\varepsilon\quad \text{and}\quad S(t)=O(1).
\end{equation}

{{\bf Step 3.} \textit{Estimate of $V(t)$.}} First note that the initial data are compactly supported in $I_0$. Formulated in characteristic coordinates $(z_i',s_i')$, this means $w_i^{(\eta)}(z_i,0)=0$ for $z'_i\notin I_0$. We then integrate \eqref{eqw} along the characteristic $C_i$ and obtain
\begin{equation*}
  V(t)=O(\eta [W(t)]^2)=O(\eta [W_0^{(\eta)}]^2).
\end{equation*}

{{\bf Step 4.} \textit{Estimate of $J(t)$.}} From \eqref{eqv} it follows
\begin{equation*}
\frac{\partial v_i}{\partial s_i}=O(S(t)[W(t)]^2).
\end{equation*}
Using \eqref{y15} and \eqref{y16}, for $t\in[0,t_0^{(\eta)}]$, we have
\begin{equation}
  J(t)=O(W_0^{(\eta)}+t[W(t)]^2)=O(W_0^{(\eta)}+\eta [W_0^{(\eta)}]^2)=O(W_0^{(\eta)}).
\end{equation}

{{\bf Step 5.} \textit{Estimate of $\bar{U}(t)$.}} The estimate for $\bar{U}$ can be derived via the expression of wave decomposition. We integrate the formula \eqref{phix}
\begin{equation}\label{phi}
  \Phi(x,t)=\int_{X_7(\eta,t)}^x\frac{\partial \Phi(x',t)}{\partial x} dx'=\int_{X_7(\eta,t)}^x\sum_{k}w_kr_k(x',t)dx'.
\end{equation}
Since $r_k(\Phi)=O(1)$, \eqref{phi} implies
\begin{equation}\label{y19}
  |\Phi(x,t)|=O\Big(\sum_{k}\int_{X_7(\eta,t)}^{X_1(2\eta,t)|}|w_k(x',t)|dx'\Big).
\end{equation}
Employing \eqref{y15} and the definition of $\bar{U}$ in \eqref{norm3}, we get
\begin{equation*}
   \bar{U}(t)=O\big(W(t)[\eta+(\bar{\lambda}_1-\underline{\lambda}_7)t]\big)=O(\eta W_0^{(\eta)})\quad \text{for any}\ t\in[0,t_0^{(\eta)}].
\end{equation*}

In summary, before the separating time $t_0^{(\eta)}$, we derive the following a priori estimates
\begin{align}
  &W(t)=O(W_0^{(\eta)}),\quad V(t)=O(\eta [W_0^{(\eta)}]^2),\quad S(t)=O(1), \\ \nonumber
  &J(t)=O(W_0^{(\eta)}),\quad \bar{U}(t)=O(\eta W_0^{(\eta)}).
\end{align}

{\bf $\bullet$ Estimates in the region $[t_0^{(\eta)},T]$:} This part encodes the key estimates of this paper. After $t_0^{(\eta)}$, $\mathcal{R}_2$ and $\mathcal{R}_3$ (or $\mathcal{R}_5$ and $\mathcal{R}_6$) may \underline{overlap} for a long time due to the non-strict hyperbolicity. Therefore, we let $\mathcal{R}_{\bar{2}}:=\mathcal{R}_2\bigcup\mathcal{R}_3$ and $\mathcal{R}_{\bar{5}}:=\mathcal{R}_5\bigcup\mathcal{R}_6$ and investigate the well-separated five characteristic strips $\{\mathcal{R}_1,\mathcal{R}_{\bar{2}},\mathcal{R}_4,\mathcal{R}_{\bar{5}},\mathcal{R}_7\}$ in this part.

{{\bf Step 1.} \textit{Estimate of $S(t)$.}} Based on the estimates we obtain in the region $[0,t_0^{(\eta)}]$, we move forward to derive estimates in the region $t\in[t_0^{(\eta)},T]$ with $T$ a potential shock formation time. Let us start with estimating $S(t)$, i.e., the supremum of inverse densities. For $\alpha=1,4,7$, if $(x,t)\in \mathcal{R}_\alpha$, by \eqref{eqrho}, we have
\begin{equation}\label{rho147}
  \frac{\partial \rho_\alpha}{\partial s_\alpha}=O(J_\alpha+ V S_\alpha).
\end{equation}
Thus, integrating \eqref{rho147} along the characteristic $\mathcal{C}_\alpha$, one can obtain that
\begin{equation}\label{rho148}
  S_\alpha(t)=O(1+tJ+ tV S)\quad \text{with}\ \alpha=1,4,7.
\end{equation}
Employing \eqref{infmrho}, we note that $\rho_3w_2$ is absent in the equation of $\rho_3$ in \eqref{eqrho}, neither is $\rho_5w_6$ in the equation of $\rho_5$. Therefore, the estimate \eqref{rho148} also holds for $\alpha=3,5$. Thus,
\begin{equation} \label{S13457}
  S_\alpha(t)=O(1+tJ+ tV S)\quad \text{for}\quad \alpha=1,3,4,5,7.
  \end{equation}

We then deal with $\rho_2$. In equation \eqref{eqrho}
 \begin{equation*}
  \frac{\partial \rho_2}{\partial s_2}=O(J_2+ V S_2+\rho_2w_3),
\end{equation*}
 a potentially harmful nonlinear term $\rho_2w_3$ pops up.  When $(x,t)\in \mathcal{R}_2$, it is possible that the point $(x,t)$ also belongs to $\mathcal{R}_3$. So we can not use $V$ to bound $w_3$ directly as in \eqref{rho147}. Our strategy is to estimate $w_3$ in $ \mathcal{R}_2 \cup \mathcal{R}_3$ via $\check{W}$. With this approach, we obtain
 \begin{equation}\label{bds2}
  S_2(t)=O(1+tJ+ tV S+t\check{W}S).
  \end{equation}
 Likewise, $S_6(t)$ shares the same bound as in \eqref{bds2}. Together with \eqref{S13457}, consequently, $S(t)$ obeys
 \begin{equation}\label{bds}
  S(t)=O(1+tJ+ tV S+t\check{W}S).
  \end{equation}

{{\bf Step 2.} \textit{Estimate of $J(t)$.}} For $(x,t)\in \mathcal{R}_\alpha$ with $\alpha=1,4,7$, it follows from \eqref{eqv} that
\begin{equation*}
  \frac{\partial v_\alpha}{\partial s_\alpha}=O(VJ_\alpha+ V^2 S_\alpha).
\end{equation*}
Via integration along the characteristics, we derive the following estimates:
\begin{equation}\label{v147}
  J_\alpha(t)=O(W_0^{(\eta)}+tVJ_\alpha+ tV^2 S_\alpha)\quad \text{for}\ \alpha=1,4,7.
\end{equation}

For $\{v_\alpha(x,t)\}_{\alpha=2,3,5,6}$ and $(x,t)\in \mathcal{R}_\alpha$, it holds that
\begin{align}
 & \frac{\partial v_2}{\partial s_2}=O\Big(VJ_2+S_2 V^2+J_2w_3+VS_2w_3\Big),\\
 &\frac{\partial v_3}{\partial s_3}=O\Big(VJ_3+S_3V^2+J_3w_2+VS_3w_2\Big),\\
&\frac{\partial v_5}{\partial s_5}=O\Big(VJ_5+S_5 V^2+J_5w_6+VS_5w_6\Big),\\
&\frac{\partial v_6}{\partial s_6}=O\Big(VJ_6+S_6 V^2+J_6w_5+VS_6w_5\Big).
\end{align}
In the same manner as to prove \eqref{bds2}, we have
\begin{equation}\label{v2356}
  \{J_\alpha\}_{\alpha=2,3,5,6}=O(W_0^{(\eta)}+tVJ+ tV^2 S+tJ\check{W}+tVS\check{W}).
\end{equation}
Combining \eqref{v147} and \eqref{v2356}, we conclude that
\begin{equation}\label{v}
  J=O(W_0^{(\eta)}+tVJ+ tV^2 S+tJ\check{W}+tVS\check{W}).
\end{equation}

{{\bf Step 3.} \textit{Estimate of $V(t)$.}} Among all a priori estimates, this step is the most delicate. We shall utilize the bi-characteristic coordinates to cope with various situations. We firstly estimate $w_i(x,t)$ for $i=1,4,7$ and $(x,t)\notin\mathcal{R}_i$. From \eqref{eqw}, it holds for $w_i$ that
\begin{equation}\label{558}
  \frac{\partial w_i}{\partial{s_i}}=O(V^2)+O\Big(\sum_{k\neq i}w_k\Big)V+O\Big(\sum_{m\neq i,k\neq i, m\neq k\atop (m,k)\neq(2,3),(m,k)\neq(5,6)}w_mw_k\Big)+{\underbrace{O\Big((\lambda_2-\lambda_3)\dboxed{w_2w_3}+(\lambda_5-\lambda_6)\dboxed{w_5w_6}\Big)}_{\text{containing weak interaction}}}.
\end{equation}
Note that $\mathcal{C}_i$ starts from $z_i\notin I_0$ and terminates at $(x,t)\notin\mathcal{R}_i$. When $t'\geq t_0^{(\eta)}$, every point $\big(X_i(z_i,t'),t'\big)$ on $\mathcal{C}_i$ falls either in $\big(\mathbb{R}\times[t_0^{(\eta)},t]\big)\setminus\bigcup_{k=1,\cdots,7}\mathcal{R}_k$ or in $\mathcal{R}_k$ for some $k\neq i$.
\begin{figure}[H]
\centering
\begin{tikzpicture}[fill opacity=0.5, text opacity=1, draw opacity=1]
\filldraw[white, fill=gray!40](2.5,0)..controls (3,1) and (3.5,1.5)..(5,3)--(2,3)..controls (2.2,2.5) and (3,2)..(3.5,0);
\filldraw[white, fill=gray!80](2.5,0)..controls (3,1) and (3.5,1.5)..(5,3)--(6,3)..controls (4.5,1.5) and (4,1)..(3.5,0);
\filldraw[white, fill=gray!80](2.5,0)..controls (2,2) and (1.5,2.5)..(1,3)--(2,3)..controls (2.2,2.5) and (3,2)..(3.5,0);
\draw[->](0,0)--(6,0)node[left,above]{$t=0$};
\draw[dashed](0,1.2)--(6,1.2)node[left,above]{$t=t_0^{(\eta)}$};
\node [below]at(3.5,0){$2\eta$};
\node [below]at(2.5,0){$\eta$};

\filldraw [black] (3.5,0) circle [radius=0.01pt]
(4,1) circle [radius=0.01pt]
(4.5,1.5) circle [radius=0.01pt]
(6,3) circle [radius=0.01pt];
\draw [color=black](3.5,0)..controls (4,1) and (4.5,1.5)..(6,3);

\filldraw [black] (2.5,0) circle [radius=0.01pt]
(3,1) circle [radius=0.01pt]
(3.5,1.5) circle [radius=0.01pt]
(5,3) circle [radius=0.01pt];
\draw [color=black](2.5,0)..controls (3,1) and (3.5,1.5)..(5,3);
\node [below] at(5.2,3){$\mathcal{R}_{i}$};

\filldraw [gray] (3.5,0) circle [radius=0.01pt]
(3,2) circle [radius=0.01pt]
(2.2,2.5) circle [radius=0.01pt]
(2,3) circle [radius=0.01pt];
\draw [color=black](3.5,0)..controls (3,2) and (2.2,2.5)..(2,3);
\node [below] at(1.8,3){$\mathcal{R}_{k}$};

\filldraw [gray] (2.5,0) circle [radius=0.01pt]
(2,2) circle [radius=0.01pt]
(1.5,2.5) circle [radius=0.01pt]
(1,3) circle [radius=0.01pt];
\draw [color=black] (2.5,0)..controls (2,2) and (1.5,2.5)..(1,3);

\filldraw [black] (1.3,0) circle [radius=0.01pt]
(1.5,0.7) circle [radius=0.01pt]
(2,1.5) circle [radius=0.01pt]
(3,2.3) circle [radius=0.8pt];
\draw [color=black,thick](1.3,0)..controls (1.5,0.7) and (2,1.5)..(3,2.3);

\node[above]at (3,2.3){$(x,t)$};
\node[below]at (1.3,0){$z_i$};

\end{tikzpicture}
\end{figure}
We denote $I_k^i=\{t'\in[t_0^{(\eta)},t]:(x,t')\in\mathcal{C}_i\bigcap\mathcal{R}_k\}$  for $k\neq i$. Integrating \eqref{558} along $\mathcal{C}_i$, with $w_i^{(\eta)}(z_i,0)=0$, we deduce
\begin{equation}\label{559}
\begin{split}
    w_i(x,t)=&O\Big(tV^2+t\check{W}^2+V\sum_{k\neq i}\int_0^tw_k\big(X_i(z_i,t'),t'\big)dt'\Big)\\
    =&O\Big(tV^2+t\check{W}^2+V\sum_{k\neq i}\int_0^{t_0^{(\eta)}}w_k\big(X_i(z_i,t'),t'\big)dt'\Big)\\
    &+O\Big(V\sum_{k\neq i}\int_{t_0^{(\eta)}}^t w_k\big(X_i(z_i,t'),t'\big)dt'\Big)\\
    =&O\Big(tV^2+t\check{W}^2+\eta[W_0^{(\eta)}]^2+V\sum_{k\neq i}\underbrace{\int_{I_k^i}w_k\big(X_i(z_i,t'),t'\big)dt'}_{M}\Big).
\end{split}
\end{equation}
For the last equality we use the fact $V(t)\leq W(t)=O(W_0^{(\eta)})$ for $t\leq t_0^{(\eta)}$.
\begin{figure}[H]
\centering
\begin{tikzpicture}[fill opacity=0.5, text opacity=1, draw opacity=1]
\filldraw[white, fill=gray!40](2.5,0)..controls (3,1) and (3.5,1.5)..(5,3)--(2,3)..controls (2.2,2.5) and (3,2)..(3.5,0);
\filldraw[white, fill=gray!80](2.5,0)..controls (3,1) and (3.5,1.5)..(5,3)--(6,3)..controls (4.5,1.5) and (4,1)..(3.5,0);
\filldraw[white, fill=gray!80](2.5,0)..controls (2,2) and (1.5,2.5)..(1,3)--(2,3)..controls (2.2,2.5) and (3,2)..(3.5,0);
\draw(0,0)--(2.5,0);
\draw[->](3.1,0)--(6,0)node[left,above]{$t=0$};
\draw[dashed](0,1.2)--(6,1.2)node[left,above]{$t=t_0^{(\eta)}$};
\node [below]at(3.8,0){$2\eta$};
\node [below]at(2.3,0){$\eta$};

\filldraw [black] (3.5,0) circle [radius=0.01pt]
(4,1) circle [radius=0.01pt]
(4.5,1.5) circle [radius=0.01pt]
(6,3) circle [radius=0.01pt];
\draw [color=black](3.5,0)..controls (4,1) and (4.5,1.5)..(6,3);

\filldraw [black] (2.5,0) circle [radius=0.01pt]
(3,1) circle [radius=0.01pt]
(3.5,1.5) circle [radius=0.01pt]
(5,3) circle [radius=0.01pt];
\draw [color=black](2.5,0)..controls (3,1) and (3.5,1.5)..(5,3);
\node [below] at(5.2,3){$\mathcal{R}_{i}$};

\filldraw [gray] (3.5,0) circle [radius=0.01pt]
(3,2) circle [radius=0.01pt]
(2.2,2.5) circle [radius=0.01pt]
(2,3) circle [radius=0.01pt];
\draw [color=gray](3.5,0)..controls (3,2) and (2.2,2.5)..(2,3);
\node [below] at(1.8,3){$\mathcal{R}_{k}$};

\filldraw [gray] (2.5,0) circle [radius=0.01pt]
(2,2) circle [radius=0.01pt]
(1.5,2.5) circle [radius=0.01pt]
(1,3) circle [radius=0.01pt];
\draw [color=gray] (2.5,0)..controls (2,2) and (1.5,2.5)..(1,3);

\filldraw [black] (1.3,0) circle [radius=0.01pt]
(1.5,0.7) circle [radius=0.01pt]
(2,1.5) circle [radius=0.01pt]
(2.3,1.8) circle [radius=0.8pt];
\draw [color=black](1.3,0)..controls (1.5,0.7) and (2,1.5)..(2.3,1.8);

\node[above]at (2.3,1.8){$\big(X_i(z_i,t'),t'\big)$};
\node[below]at (1.3,0){$z_i$};

\filldraw [gray] (3.1,0) circle [radius=0.01pt]
(2.8,1) circle [radius=0.01pt]
(2.5,1.5) circle [radius=0.01pt]
(2.3,1.8) circle [radius=1pt];
\draw [color=black,thick](3.1,0)..controls (2.8,1) and (2.5,1.5)..(2.3,1.8);
\node[below]at (3.1,0){$y_k$};

\draw[line width=2pt,color=black,thick](2.5,0)--(3.1,0);
\end{tikzpicture}
\end{figure}
Now we need to bound the integral $M$ in \eqref{559}. When $\big(X_i(z_i,t'),t'\big)\in I_k^i$ for some $k\neq i$ (See the above figure), we employ the bi-characteristic coordinates to estimate the integral $M$
\begin{equation}\label{560}
  \begin{split}
    &\int_{I_k^i}w_k\big(X_i(z_i,t'),t'\big)dt'\\
    =&O\Big(\int_{y_k\in[\eta,2\eta]}\bigg|\frac{\rho_k\big(y_k,t'(y_i,y_k)\big)}{\lambda_i-\lambda_k}w_k\big(y_k,t'(y_i,y_k)\big)\bigg|dy_k\Big)\\
    =&O(\eta J_k).
  \end{split}
\end{equation}
Here we use the fact that $\lambda_i-\lambda_k\neq 0$ for $i=1,4,7$. Then, together with \eqref{559} and \eqref{560}, for $i=1,4,7$, we obtain
\begin{equation}\label{355}
  w_i(x,t)=O(\eta [W_0^{(\eta)}]^2+tV^2+t\check{W}^2+\eta VJ)\quad\text{for any}\ (x,t)\notin\mathcal{R}_i.
\end{equation}

Next, we estimate other $w_i$, i.e., $\{V_i\}_{i=2,3,5,6}$. Here we provide the details of the estimate for $V_3$. $\{V_j\}_{j=2,5,6}$ can be handled in a similar way. For $w_3(x,t)$, with $(x,t)\notin \mathcal{R}_2\cup\mathcal{R}_3$, we have
\begin{equation}
  \frac{\partial w_3}{\partial{s_3}}=O\bigg(V^2+\Big(\sum_{k\neq 2,k\neq 3}w_k\Big)V+\sum_{m,k\neq 3, m\neq k,\atop(m,k)\neq(5,6)}w_mw_k+\dboxed{(\lambda_5-\lambda_6)w_5w_6}\bigg).
\end{equation}
Integrating $\partial_{s_3}w_3$ along $\mathcal{C}_3$, it is possible that some points $(x',t')\in \mathcal{C}_3$ lie in the region $\mathcal{R}_5\cup\mathcal{R}_6$ for $t_0^{(\eta)}\leq t'<t$. Taking into account of this scenario, we derive the following estimate
\begin{equation}\label{5.63}
\begin{split}
    w_3(x,t)=&O\Big(tV^2+tV\check{W}+t\check{W}^2+V\sum_{k=1,4,7}\int_0^tw_k\big(X_2(z_2,t'),t'\big)dt'\Big)\\
    =&O\Big(tV^2+tV\check{W}+t\check{W}^2+\eta[W_0^{(\eta)}]^2+V\sum_{k=1,4,7}\int_{I_k^3}w_k\big(X_2(z_2,t'),t'\big)dt'\Big).
    \end{split}
\end{equation}
Similar to \eqref{559} and \eqref{560}, it then follows
\begin{equation}\label{bdvr}
  V_3=O\left(\eta [W_0^{(\eta)}]^2+tV^2+tV\check{W}+t\check{W}^2+\eta VJ\right).
\end{equation}
Thus, we conclude from \eqref{355} and \eqref{bdvr}
\begin{equation}\label{bdv}
 V=O\left(\eta [W_0^{(\eta)}]^2+tV^2+tV\check{W}+t\check{W}^2+\eta VJ\right).
\end{equation}

{{\bf Step 4.} \textit{Estimate of $\check{W}(t)$.}} Now we proceed to estimate
$$\check{W}(t)=\max\Big\{\sup_{(x',t')\in\mathcal{R}_2\cup\mathcal{R}_3,\atop 0\leq t'\leq t}\{w_2,w_3\},\sup_{(x',t')\in\mathcal{R}_5\cup\mathcal{R}_6,\atop 0\leq t'\leq t}\{w_5,w_6\}\Big\},$$
i.e., the supremum of $w_2,w_3$ (or $w_5,w_6$) taken in the unions of the corresponding overlapped characteristic strips $\mathcal{R}_2\cup\mathcal{R}_3$ (or $\mathcal{R}_5\cup\mathcal{R}_6)$. Note that the \underline{strong} interaction of $w_2,w_3$ (or $w_5,w_6$) will appear in the following estimates. And their characteristic speeds are almost-the-same. We focus on estimating $w_3(x,t)$ with $(x,t)\in\mathcal{R}_2\cup\mathcal{R}_3$. Recall the equation of $w_3$,
\begin{equation}\label{eqw3}
  \frac{\partial w_3}{\partial{s_3}}=O\Big[w_3^2+\boxed{w_2w_3}+w_2\Big(\sum_{k\neq 2,k\neq 3}w_k\Big)+w_3\Big(\sum_{k\neq 2,k\neq 3}w_k\Big)+\sum_{m,k\neq 2,3, m\neq k}w_mw_k\Big].
\end{equation}
The marked term $\boxed{w_2w_3}$ corresponds to the strong nonlinear interaction. Integrating \eqref{eqw3}, it holds that
\begin{equation}
w_3(x,t)=O\left(\eta[W_0^{(\eta)}]^2+t\check{W}^2+tV\check{W}+tV^2\right),\quad \text{for}\quad (x,t)\in\mathcal{R}_2\cup\mathcal{R}_3.
\end{equation}
The estimates for $w_2,w_5,w_6$ can be obtained in a same manner as for $w_3$. And hence we prove
\begin{equation}
\check{W}(t)=O(\eta[W_0^{(\eta)}]^2+t\check{W}^2+tV\check{W}+tV^2).
\end{equation}

{{\bf Step 5.} \textit{Estimate of $\bar{U}(t)$.}} Finally, the estimate of $\Phi$ can be deduced via the aforementioned estimates. If $(x,t)$ does not belong to any characteristic strip, one can go back to \eqref{y19} and obtain
\begin{equation}\label{u1}
 \bar{U}(t)=O\big((\eta+(\bar{\lambda}_1-\underline{\lambda}_7)t)V\big).
\end{equation}
If $(x,t)\in\mathcal{R}_k$ for some $k$, we have
\begin{equation}\label{u2}
\begin{split}
  |\Phi(x,t)|=&\Big|\int_{X_k(\eta,t)}^x\frac{\partial \Phi(x',t)}{\partial x} dx'\Big|=\Big|\int_{X_k(\eta,t)}^x\sum_{k}w_kr_k(x',t)dx'\Big|\\
  \leq&\int_{X_k(\eta,t)}^{X_k(2\eta,t)}|w_k(x',t)|dx'=O\Big(\int_{\eta}^{2\eta}|w_k(z_k,t)|\rho_kdz_k\Big)=O(\eta J).
  \end{split}
\end{equation}
Combining \eqref{u1} and \eqref{u2}, we therefore obtain
\begin{equation}
   \bar{U}(t)=O(\eta J+\eta V+tV),\quad \text{for any}\ t\in[t_0^{(\eta)},T].
\end{equation}

In conclusion, for all $ t\in[0,T]$, we derive the following a priori estimates
\begin{align}
    S=&O(1+tVS+tJ+tS\check{W}),\label{y25}\\
    J=&O(W_0^{(\eta)}+t VJ+tV^2 S+t\check{W}J+tVS\check{W}),\label{364}\\
    V=&O(\eta [W_0^{(\eta)}]^2+tV^2+\eta VJ+tV\check{W}+t\check{W}^2), \label{366}\\
    \check{W}=&O(\eta[W_0^{(\eta)}]^2+t\check{W}^2+tV\check{W}+tV^2),\label{wc}\\
    \bar{U}=&O(\eta J+\eta V+ tV). \label{y31}
\end{align}

These a priori estimates will be used in a bootstrap argument in the next subsection. After closing the bootstrap argument, we will obtain the upper bounds for all the quantities defined in \eqref{norm1}-\eqref{new norm}.

\subsection{$L^\infty$-estimates and bootstrap argument}\label{bootargu}
In this subsection, we design a bootstrap argument to bound $S$, $J$, $V$, $\check{W}$ and $\bar{U}$. More precisely, employing the desired a priori estimates \eqref{y25}-\eqref{y31}, we obtain the following proposition:
\begin{prop}[{\bf $L^\infty$-estimates}] \label{upperbd}
For $t\in[0,T_\eta^*)$ with $T_\eta^*$ being the (hypothetical) shock formation time satisfying
\begin{equation}\label{368}
  T_\eta^*\leq\frac{C}{W_0^{(\eta)}},
\end{equation}
we have
\begin{align}
  S(t)=&O(1),\label{63}\\
   J(t)=&O(W_0^{(\eta)}),\label{64}\\
   V(t)=&O\left(\eta  [W_0^{(\eta)}]^2\right),\label{66}\\
    \check{W}(t)=&O\left(\eta  [W_0^{(\eta)}]^2\right),\label{667}\\
    \bar{U}(t)=&O(W_0^{(\eta)}).\label{barU}
\end{align}
\end{prop}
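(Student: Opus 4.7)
The plan is a continuity/bootstrap argument built directly on the a priori bounds \eqref{y25}--\eqref{y31}. I fix positive constants $K_{\check W}, K_V, K_J, K_S$, to be determined in this order, and make the bootstrap assumptions
\begin{equation}\label{proposal-BA}
\check W(t)\le K_{\check W}\eta[W_0^{(\eta)}]^2,\quad
V(t)\le K_V\eta[W_0^{(\eta)}]^2,\quad
J(t)\le K_J W_0^{(\eta)},\quad
S(t)\le K_S
\end{equation}
on some maximal sub-interval $[0,T^{**})\subset[0,T_\eta^*)$. These bounds hold at $t=0$ with margin thanks to \eqref{319} and \eqref{Wr}, so by continuity $T^{**}>0$. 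The aim is to show that on $[0,T^{**})$ one can \emph{strictly} improve \eqref{proposal-BA}, say by a factor of two, which forces $T^{**}=T_\eta^*$ and yields the proposition.

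The mechanism is that, under \eqref{proposal-BA} together with $t\le T_\eta^*\le C/W_0^{(\eta)}$ and $\eta W_0^{(\eta)}\ll 1$ (secured by the choice of $\theta$ in \eqref{Wr}), each of the combinations $tV$, $t\check W$, $\eta V$, $\eta J$ is $O(\eta W_0^{(\eta)})=o(1)$. Substituting \eqref{proposal-BA} into \eqref{wc}, \eqref{366}, and \eqref{364}, every nonlinear term on the right-hand side becomes a small multiple of the corresponding bootstrap quantity, so that
\[
\check W\le C_0\eta[W_0^{(\eta)}]^2+o(1)\,K_{\check W}\eta[W_0^{(\eta)}]^2,\qquad
V\le C_0\eta[W_0^{(\eta)}]^2+o(1)\,K_V\eta[W_0^{(\eta)}]^2,
\]
\[
J\le C_0 W_0^{(\eta)}+o(1)\,K_J W_0^{(\eta)},
\]
with $C_0$ denoting the implicit constant coming from the a priori estimates. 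Choosing $K_{\check W}, K_V, K_J$ each at least $4C_0$ and then taking $\eta W_0^{(\eta)}$ small enough that the $o(1)$ quantities are below $1/4$ halves each of these three bounds.

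The main obstacle is the bound on $S$. Inserting \eqref{proposal-BA} into \eqref{y25} gives
\[
S\le C_0+C_0\,tJ+o(1)\,K_S\le C_0+C_0 CK_J+o(1)\,K_S,
\]
but here $tJ\le T_\eta^* K_J W_0^{(\eta)}\le CK_J$ is only $O(1)$, \emph{not} small: the $tJ$ contribution cannot be absorbed by a smallness factor. The remedy is the triangular ordering of the bootstrap constants. Once $K_J$ has already been frozen at its value above, I pick $K_S\ge 4C_0(1+CK_J)$, after which the preceding display yields $S\le K_S/2$. Finally, \eqref{y31} combined with the already-improved controls on $J$ and $V$ gives $\bar U=O(\eta J+\eta V+tV)=O(\eta W_0^{(\eta)})$, which is in particular $O(W_0^{(\eta)})$. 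Collecting all four strict improvements together with the continuity of $S, J, V, \check W$ in $t$, the standard continuity argument forces $T^{**}=T_\eta^*$, establishing \eqref{63}--\eqref{barU}.
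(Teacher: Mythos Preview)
Your argument is correct and follows essentially the same bootstrap strategy as the paper; the only stylistic difference is that the paper bootstraps the looser assumptions $tV,\,t\check W,\,J\le\theta^{1/2}$ and then recovers the sharp estimates \eqref{63}--\eqref{667} by triangular back-substitution, which spares the explicit tracking of the constants $K_*$. One small point to tidy in your ordering: the $J$-estimate \eqref{364} contains the terms $tV^2S$ and $tVS\check W$, whose $o(1)$ prefactors involve $K_S$, so $K_S$ must be fixed \emph{before} you take $\eta W_0^{(\eta)}$ small---since your $K_S$ depends only on the already-frozen $K_J$, this is just a harmless swap of your last two steps.
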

\begin{remark} \label{J1 bound}
In this subsection, we establish the above upper-bound estimates via a bootstrap argument. Later, in Subsection \ref{pf1.2}, for proving the shock formation theorem, with these obtained upper-bound estimates, we further derive a lower bound estimate \eqref{vlu} for $v_1$ and show that the corresponding inverse density $\rho_1\to 0$ (see \eqref{711}-\eqref{rho1 zero}) when $t\to T_\eta^*$. This indicates the shock formation happens at $T_\eta^*$ for the first family of the characteristic waves. In addition, in Proposition \ref{wave dynamics}, we also establish improved upper-bound estimates \eqref{other vi} for other $v_i$ within $\mathcal{R}_i$, and show that $\rho_i$ (with $i\neq 1$) is bounded away from $0$. Consequently, the other characteristic waves remain regular up to $T_\eta^*$. 
\end{remark}
\begin{remark} \label{tboot}
A posteriori, we will prove that the shock formation time $T_\eta^*$ verifies \eqref{368} in Subsection \ref{pf1.2}. In \eqref{Tshock}, we will also specify the value of the constant $C$ in \eqref{368}.
\end{remark}

\begin{proof}
When $t=0$, by \eqref{319} and definitions in \eqref{norm1}-\eqref{new norm}, it holds
\begin{equation*}
 S(0)=1,\quad J(0)=W_0^{(\eta)},\quad V(0)=0.
\end{equation*}

To obtain \eqref{63}-\eqref{66}, with $\theta$ small, we impose bootstrap assumptions:
\begin{align}
  tV\leq& \theta^{\frac12},\label{y33}\\
   t\check{W}\leq& \theta^{\frac12},\label{wca}\\
   J\leq&\theta^{\frac12}.\label{Ja}
\end{align}
Applying \eqref{y33} and \eqref{wca} to \eqref{y25}, we have
\begin{align}\label{y34}
 S=O(1+tJ+\theta^{\frac12}S)\quad \text{when $\theta$ is small, this implies}\quad S=O(1+tJ).
\end{align}

To control $J(t)$, we appeal to \eqref{364}. Using \eqref{y33}\eqref{wca}\eqref{y34}, we obtain
\begin{align}
\text{via plugging \eqref{y33}\eqref{wca} into \eqref{364}},\quad J&=O(W_0^{(\eta)}+\theta^{\frac12}J+\theta^{\frac12}V S).\notag\\
\text{Absorb $\theta^{\frac12}J$, we have}\quad J&=O(W_0^{(\eta)}+\theta^{\frac12}V S)\notag\\
\text{by \eqref{y34},} \quad\  &=O(W_0^{(\eta)}+\theta^{\frac12}V +\theta J).\notag\\
\text{Absorb $\theta J$, it holds}\quad J&=O(W_0^{(\eta)}+\theta^{\frac12}V).\label{y35}
\end{align}

We proceed to estimate $V(t)$. Applying \eqref{y33}-\eqref{Ja} to \eqref{366}, we get
\begin{align*}
  V=O\Big(\eta [W_0^{(\eta)}]^2+\theta^{\frac12}V+\eta \theta^{\frac12} V+\theta^{\frac12}\check{W}\Big).
\end{align*}
For $0<\theta\ll 1$, the second and third terms on the right can be absorbed by the left, hence it infers
\begin{equation}\label{Vz}
  V=O\Big(\eta [W_0^{(\eta)}]^2+\theta^{\frac12}\check{W}\Big).
  \end{equation}

We then deal with $\check{W}$, invoking \eqref{y33}\eqref{wca}\eqref{Vz} into \eqref{wc}, we deduce
 \begin{align}
\text{via plugging \eqref{y33}\eqref{wca} into \eqref{wc},}\quad \check{W}&=O(\eta(W_0^{(\eta)})^2+\theta^{\frac12}\check{W}+\theta^{\frac12}V).\notag\\
\text{Absorb $\theta^{\frac12}\check{W}$, we have}\quad \check{W}&=O(\eta(W_0^{(\eta)})^2+\theta^{\frac12}V)\notag\\
\text{by \eqref{Vz},} \quad\quad  &=O(\eta[W_0^{(\eta)}]^2+\theta^{\frac12}\eta (W_0^{(\eta)})^2+\theta\check{W}).\notag\\
\text{Absorb $\theta\check{W}$, it holds}\quad \check{W}&=O(\eta(W_0^{(\eta)})^2). \label{Wcc}
 \end{align}
This finishes the proof of \eqref{667}. We then substitute \eqref{Wcc} into \eqref{Vz}, hence \eqref{66} can be verified.
Furthermore, with \eqref{368}, we have
\begin{equation}\label{tv}
 tV=O(\eta W_0^{(\eta)})=O(\theta)<\theta^{\frac12},
\end{equation}
\begin{equation}\label{tw}
 t\check{W}=O(\eta W_0^{(\eta)})=O(\theta)<\theta^{\frac12}.
\end{equation}
Note that \eqref{tv} and \eqref{tw} improve the bootstrap assumptions \eqref{y33} and \eqref{wca}.

Next, for $J(t)$ and $S(t)$, it follows from \eqref{y35}\eqref{66} that the estimates \eqref{64} and \eqref{63} are valid:
\begin{equation}\label{J}
J=O(W_0^{(\eta)}+\theta^{\frac12}\eta [W_0^{(\eta)}]^2)=O(W_0^{(\eta)})=O(\theta)<\theta^{\frac12},
\end{equation}
\begin{equation}
 S=O(1+tW_0^{(\eta)})=O(1).
\end{equation}
And \eqref{J} improves the bootstrap assumption \eqref{Ja}.

Finally, by \eqref{63}-\eqref{667} we deduce the estimate for $\bar{U}$
\begin{equation}\label{68}
\bar{U}(t)=O(\eta W_0^{(\eta)}+\eta^2 W_0^{(\eta)}+\eta [W_0^{(\eta)}]^2)=O(\eta W_0^{(\eta)})=O(\theta),\ t\in[0,T_\eta^*].
\end{equation}
Furthermore, since $\bar{U}$ is the supremum of $\Phi$ as defined in \eqref{norm3}, with sufficiently small $\theta$, we then conclude that the solution $\Phi$ always stays in $B_{\delta}^7(0)$.

\end{proof}

\subsection{Shock formation} \label{pf1.2}
We will finish the proof of Theorem \ref{shock} in this subsection. In particular, we will show that in $\mathcal{R}_1$ the inverse density $\rho_1\to 0$ and $w_1 \to \infty$ as $t \to T_\eta^*$ while $\Phi$ is uniformly bounded. Hence a shock happens at $T_\eta^*$.
\begin{proof}[{Proof of Theorem \ref{shock}}]
Our first step is to control $v_1$. We integrate \eqref{eqv}
$$
\frac{\partial v_1}{\partial s_1}=O\Big(\sum_{m\neq 1}w_m\Big)v_1+\underbrace{O\Big(\sum_{m,k\neq1, m\neq k}w_mw_k\Big)}_{\text{containing weak interaction}}\rho_1
$$
along $\mathcal{C}_1$ and obtain
\begin{equation}\label{79}
  v_1(z_1,t)=w_1^{(\eta)}(z_1,0)+O(tVJ+tV^2S)=w_1^{(\eta)}(z_1,0)+O(\eta [W_0^{(\eta)}]^2).
\end{equation}
In particular, taking $z_0\in I_0$ such that $w_1^{(\eta)}(z_0,0)=W_0^{(\eta)}$, then for sufficiently small $\theta$, the above formula yields
\begin{equation}\label{vlu}
 (1-\varepsilon)W_0^{(\eta)}\leq v_1(z_0,t)\leq (1+\varepsilon) W_0^{(\eta)}.
\end{equation}

We then estimate $\rho_1$. Using \eqref{eqrho}
$$
  \frac{\partial\rho_1}{\partial s_1}=c_{11}^1(\Phi)v_1+O\Big(\sum_{ k\neq 1}w_k\Big)\rho_1,
$$
and the fact $c_{11}^1(\Phi)<0$, we obtain
\begin{equation}\label{381}
  -|c_{11}^1(\Phi)|v_1-\Big|O\Big(\sum_{ k\neq 1}w_k\Big)\Big|\rho_1\leq\frac{\partial\rho_1}{\partial s_1}\leq -|c_{11}^1(\Phi)|v_1+\Big|O\Big(\sum_{ k\neq 1}w_k\Big)\Big|\rho_1.
\end{equation}
We will then employ the Gr\"{o}nwall's inequality to derive upper and lower bounds of $\rho_1$. Firstly, by \eqref{68}, we have $|\Phi|=O(\eta W_0^{(\eta)})=O(\theta)\leq\delta$. Setting $\theta$ to be sufficiently small, we further have
\begin{equation}\label{y37}
 (1-\varepsilon)|c_{11}^1(0)|\leq |c_{11}^1(\Phi)|\leq (1+\varepsilon)|c_{11}^1(0)|.
\end{equation}
For the coefficient $O\Big(\sum_{ k\neq 1}w_k\Big)$ in \eqref{381}, using bi-characteristic coordinates (as in Subsection \ref{bootargu}), we deduce
\begin{equation*}
\begin{split}
  \int_0^{t}\sum_{k\neq 1}|w_k(X_1(z_1,t'),t')|dt'
 =\sum_{k\neq 1}\int_{-2\eta}^{2\eta}\frac{|v_k|}{\lambda_1-\lambda_k}dy_k
  =O(\eta W_0^{(\eta)}+\eta J)=O(\eta W_0^{(\eta)})=O(\theta).
  \end{split}
\end{equation*}
Here we use the fact that the factor $\frac{1}{\lambda_1-\lambda_k}$ arising from the bi-characteristic transformation is of $O(1)$. For $0<\theta\ll 1$, the above estimate implies
\begin{equation}\label{385}
  1-\varepsilon\leq \exp{\Big(\int_0^{t}\pm|O\big(\sum_{k\neq 1}w_k(X_1(z_1,t'),t')\big)|dt'\Big)}\leq1+\varepsilon.
\end{equation}
Applying the Gr\"{o}nwall's inequality to \eqref{381} and combining with \eqref{y37}-\eqref{385}, we get
\begin{equation}\label{y38}
\begin{split}
  &(1-\varepsilon)(1-(1+\varepsilon)^2|c_{11}^1(0)|\int_0^tv_1(z_1,t')dt')\\
  \leq&\rho_1(z_1,t)\\
\leq& (1+\varepsilon)(1-(1-\varepsilon)^2|c_{11}^1(0)|\int_0^tv_1(z_1,t')dt').
\end{split}
\end{equation}
By the first inequality of \eqref{vlu}, we then arrive at
\begin{equation}\label{711}
  \rho_1(z_0,t)\geq(1-\varepsilon)\Big(1-(1+\varepsilon)^3|c_{11}^1(0)|tW_0^{(\eta)}\Big),
\end{equation}
which shows that $\rho_1(z_0,t)>0$ when $t<\frac{1}{(1+\varepsilon)^3|c_{11}^1(0)|W_0^{(\eta)}}$. Meanwhile, the second inequality of \eqref{vlu} implies
\begin{equation} \label{rho1 upp}
  \rho_1(z_0,t)\leq (1+\varepsilon)\Big(1-(1-\varepsilon)^3|c_{11}^1(0)|tW_0^{(\eta)}\Big).
\end{equation}
Together with \eqref{711}, we conclude that there exists a finite positive time $T_\eta^*(z_0)$ (shock formation time) such that
\begin{equation} \label{rho1 zero}
  \lim_{t\rightarrow T_\eta^*(z_0)} \rho_1(z_0,t)=0\quad \text{and}\quad \lim_{t\rightarrow T_\eta^*(z_0)} w_1(z_0,t)=\lim_{t\rightarrow T_\eta^*(z_0)} \frac{v_1}{\rho_1}(z_0,t)=\infty.
\end{equation}
And $T_\eta^*(z_0)$ obeys
\begin{equation} \label{Tshock}
 \frac{1}{(1+\varepsilon)^3|c_{11}^1(0)|W_0^{(\eta)}}\leq T_\eta^*(z_0)\leq\frac{1}{(1-\varepsilon)^3|c_{11}^1(0)|W_0^{(\eta)}}.
\end{equation}
As we mentioned in Remark \ref{tboot}, this verifies the condition \eqref{368}.
\end{proof}
\begin{remark}[{\bf First shock formation at time $T_\eta^*$}]
In the above proof, we show that along the first family of characteristic issuing from $z_0$, a shock forms at $t=T_\eta^*(z_0)$. We then consider the below neighborhood of $z_0$:
\begin{equation} \label{Tshock4}
\mathcal{C}_{\mathrm{Sing}}(z_0)=\left\{z\,\ \left|\,\frac{(1-\varepsilon)^3}{(1+\varepsilon)^3}<\frac{w_1^{(\eta)}(z,0)}{W_0^{(\eta)}}\right.\right\}.
\end{equation}
With the same argument, for any $z\in \mathcal{C}_{\mathrm{Sing}}(z_0)$, we can also prove that a shock forms at time $t=T_\eta^*(z)$ along the characteristic issuing from $z$. Moreover, similarly as in \eqref{Tshock}, the lifespan $T_\eta^*(z)$ satisfies the following lower and upper bounds estimates:
\begin{equation} \label{Tshock2}
\underline{T}_\eta(z)\triangleq\frac{1}{(1+\varepsilon)^3|c_{11}^1(0)|w_1^{(\eta)}(z,0)}\leq T_\eta^*(z)\leq\frac{1}{(1-\varepsilon)^3|c_{11}^1(0)|w_1^{(\eta)}(z,0)}\triangleq \overline{T}_\eta(z).
\end{equation}
Since $z_0$ is the point where the initial datum $w_1^{(\eta)}(z,0)$ attains its maximum $W_0^{(\eta)}$, in \eqref{Tshock2}, we can see that $\overline{T}_\eta(z)\geq \overline{T}_\eta(z_0)$ and $\underline{T}_\eta(z)\geq \underline{T}_\eta(z_0)$. Outside of $\mathcal{C}_{\mathrm{Sing}}(z_0)$, note that we require $|w_1^{(\eta)}(z,0)|\geq \frac{(1-\varepsilon)^3}{(1+\varepsilon)^3}W_0^{(\eta)}$, hence we have that $\underline{T}_\eta(z)\geq\overline{T}_\eta(z_0)$. This implies that the shock forming along the characteristic starting from $z\notin\mathcal{C}_{\mathrm{Sing}}(z_0)$ is later than the one along the $z_0$-issuing characteristic. Within $\mathcal{C}_{\mathrm{Sing}}(z_0)$, where $|w_1^{(\eta)}(z,0)|\approx W_0^{(\eta)}$, we then find the first shock formation time, and it is 
$$T_\eta^*=\inf \{T_\eta^*(z)|z\in\mathcal{C}_{\mathrm{Sing}}(z_0)\}.$$ 
Since the initial data are smooth and compactly supported, the first shock formation time $T_\eta^*$ can indeed be attained. For notational simplicity, in the following part of this paper, we abuse the notation slightly and still use $z_0$ to label the characteristic along which the first shock forms, and it may be a point close to the previous $z_0$. 
\end{remark}
Recall that the estimates of $V(t)$ and $\check{W}(t)$ in Section \ref{bootargu} show that the unknowns $\{w_i(z,t)\}_{i=2,3,5,6}$ are bounded in the whole space before the shock formation time $T_\eta^*$, and $\{w_i(z,t)\}_{i=4,7}$  are bounded when $(z,t)\notin\{\mathcal{R}_i\}_{i=4,7}$. In fact, now we prove that $\{w_i(z,t)\}_{i=4,7}$ are also regular in $\{\mathcal{R}_i\}_{i=4,7}$. To show this, we introduce
\begin{equation}
\check{W}_i'(t):=\sup_{(z,t')\in\mathcal{R}_i\atop 0\leq t'\leq t}\{w_i(z,t')\}\quad \text{for}\quad i=4,7
\end{equation}
and prove the following:
\begin{prop}
For $t\in[0,T_\eta^*)$ with the shock formation time $T_\eta^*$ satisfying \eqref{Tshock}, we have $\check{W}_i'(t)=O([W_0^{(\eta)}]^2)$ for $i=4,7$.
\end{prop}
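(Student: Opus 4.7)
The plan is to exploit the identity $w_i=v_i/\rho_i$ for $i=4,7$: in light of Proposition \ref{upperbd} we already control $v_i$ through $J_i(t)=O(W_0^{(\eta)})$, so it suffices to establish a positive lower bound on $\rho_i$ inside $\mathcal{R}_i$. The key structural input is that the initial data of $w_4$ and $w_7$ are very small by \eqref{Wr}, namely $O(\eta W_0^{(\eta)})$ and $O(\eta[W_0^{(\eta)}]^2)$ respectively, so both $v_4,v_7$ remain small along the respective characteristics and cannot drive $\rho_4,\rho_7$ toward zero.

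First I would integrate \eqref{eqv} along $\mathcal{C}_i$ to obtain a Gronwall-type estimate for $v_i$. Using the structural formula \eqref{infmv} together with the bounds $V,\check{W}=O(\eta[W_0^{(\eta)}]^2)$ and $S=O(1)$ from Proposition \ref{upperbd}, the coefficient $\sum_{m\ne i}\gamma^i_{im}w_m$ integrates to $O(\theta)$ along $\mathcal{C}_i$. This is verified by splitting $[0,t]$ into $[0,t_0^{(\eta)}]$, where $|w_m|\le W=O(W_0^{(\eta)})$ and so the contribution is $O(t_0^{(\eta)}W_0^{(\eta)})=O(\eta W_0^{(\eta)})$, and $[t_0^{(\eta)},T_\eta^*]$, where $\mathcal{C}_i$ lies in $\mathcal{R}_i$ away from the other four separated strips so $|w_m|\le V$ and the contribution is $O(T_\eta^*V)=O(\eta W_0^{(\eta)})$. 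The same decomposition controls the forcing $\rho_i\sum\gamma^i_{km}w_kw_m$ by $O(\eta[W_0^{(\eta)}]^2)$; the potentially dangerous almost-resonant contributions $(\lambda_2-\lambda_3)w_2w_3$ and $(\lambda_5-\lambda_6)w_5w_6$ visible in \eqref{infmv} are harmless thanks to their small prefactors. Combined with the small initial values and Gronwall's inequality, this yields $|v_7(z_7,t)|=O(\eta[W_0^{(\eta)}]^2)$ and $|v_4(z_4,t)|=O(\eta W_0^{(\eta)})$.

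Next I would plug these refined $v_i$ estimates into \eqref{eqrho}. For $i=4$ the structural identity \eqref{infmrho} shows that the $v_4$ contribution is absent (reflecting the linear degeneracy $c^4_{44}=0$), and since $\sum_{m\ne 4}c^4_{4m}w_m$ again integrates to $O(\theta)$ along $\mathcal{C}_4$, Gronwall gives $\rho_4(z_4,t)=\exp(O(\theta))\ge\tfrac12$. For $i=7$, the bound $\int_0^t|c^7_{77}v_7|\,ds_7\le T_\eta^*\cdot O(\eta[W_0^{(\eta)}]^2)=O(\eta W_0^{(\eta)})=O(\theta)$ follows from the $v_7$ estimate above and \eqref{Tshock}, so the Gronwall scheme already used for $\rho_1$ in \eqref{y38} gives $\rho_7(z_7,t)\ge 1-O(\theta)\ge\tfrac12$. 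Combining, $|w_i|=|v_i|/\rho_i\le 2|v_i|=O(W_0^{(\eta)})$ on $\mathcal{R}_i$ for $i=4,7$, which is the desired $\check{W}_i'(t)=O(W_0^{(\eta)})$. The main obstacle is the pre-separation interval $[0,t_0^{(\eta)}]$, where $\mathcal{C}_i$ may cross every other strip and the naive bound on $w_m$ is only $O(W_0^{(\eta)})$; the rescue is that $t_0^{(\eta)}\sim\eta$, producing the $O(\theta)$ contribution needed to keep every relevant Gronwall exponential close to $1$.
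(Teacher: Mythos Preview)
Your proposal is correct and takes a genuinely different route from the paper. The paper works directly with the $w_i$ equations: for $i=4$ it integrates \eqref{eqw} using the linear degeneracy $c^4_{44}=0$ and absorbs the $tV\check{W}_4'$ cross term via $tV=O(\theta)$; for $i=7$ it keeps the Riccati term $-c^7_{77}w_7^2$ and closes an additional bootstrap $t\check{W}_7'\le\theta^{1/2}$, obtaining $\check{W}_7'=O([W_0^{(\eta)}]^2)$. Your approach instead establishes a uniform lower bound $\rho_i\ge\tfrac12$ on $\mathcal{R}_i$ by first refining the $v_i$ estimates (exploiting the small initial data in \eqref{Wr}) and then applies $w_i=v_i/\rho_i$.

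Your argument is more unified (the same scheme handles $i=4$ and $i=7$), avoids the extra bootstrap for $w_7$, and in fact yields the sharper bounds $\check{W}_4'=O(\eta W_0^{(\eta)})$ and $\check{W}_7'=O(\eta[W_0^{(\eta)}]^2)$; it also anticipates Proposition~\ref{wave dynamics}, whose proof the paper only sketches. The paper's approach, by contrast, stays closer to the Riccati structure and is slightly more self-contained for $w_4$ since it does not require the refined $v_4$ estimate. One minor remark: your appeal to the small prefactors in the weak-interaction terms $(\lambda_2-\lambda_3)w_2w_3$, $(\lambda_5-\lambda_6)w_5w_6$ is not actually needed here, since for $i=4,7$ the characteristic $\mathcal{C}_i$ lies outside $\mathcal{R}_{\bar 2}\cup\mathcal{R}_{\bar 5}$ after $t_0^{(\eta)}$ and hence $w_2,w_3,w_5,w_6$ are already controlled by $V$.
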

\begin{proof}
We study $w_4$ and $w_7$ separately. For $w_4$, since $c^4_{44}=0$, it verifies
\begin{equation}\label{eqw4}
  \frac{\partial w_4}{\partial{s_4}}=O\Big(\sum_{k\neq 4}w_k\Big)w_4+\underbrace{O\Big(\sum_{m\neq 4,k\neq 4\atop m\neq k}w_mw_k\Big)}_{\text{containing weak interaction}}.
\end{equation}
Let $(z,t)\in\mathcal{R}_4$ and $t\in[t_0^{(\eta)},T_\eta^*)$, integrating \eqref{eqw4} along $\mathcal{C}_4$, we obtain
\begin{equation}
w_4(z,t)\leq O(\eta [W_0^{(\eta)}]^2+tV\check{W}_4'+tV^2).
\end{equation}
In view of the facts $tV=O(\theta)$ and $V=O\big(\eta[W_0^{(\eta)}]^2\big)$, we hence get
\begin{equation} \label{Wcc4}
\check{W}_4'\leq O([W_0^{(\eta)}]^2).
\end{equation}

For $w_7$, even though its evolution equation is of Riccati-type, its initial datum is designed to be much more smaller than that of $w_1$. This guarantees that $w_7(z,t)$ is regular when $t<T_\eta^*$. Invoking the estimate of $V$, it holds that
\begin{equation}\label{eqw7}
\begin{split}
  \frac{\partial w_7}{\partial{s_7}}=&-c^7_{77}w_7^2+O\Big(\sum_{k\neq 7}w_k\Big)w_7+\underbrace{O\Big(\sum_{m\neq 7,k\neq 7\atop m\neq k}w_mw_k\Big)}_{\text{containing weak interaction}}\\
  =&-c^7_{77}w_7^2+O(V)w_7+O(V^2)\\
  =&-c^7_{77}w_7^2+O\big(\eta[W_0^{(\eta)}]^2\big)w_7+O\big(\eta^2[W_0^{(\eta)}]^4\big).
  \end{split}
\end{equation}
Equivalently,
\begin{small}
\begin{equation} \label{w7ode}
\frac{d}{ds}\Big[\exp\Big(O\big(\eta[W_0^{(\eta)}]^2\big)s\Big)w_7\Big]=-\exp\Big(O\big(\eta[W_0^{(\eta)}]^2\big)s\Big)c^7_{77}w_7^2+\exp\Big(O\big(\eta[W_0^{(\eta)}]^2\big)s\Big)O\big(\eta^2[W_0^{(\eta)}]^4\big).
\end{equation}
\end{small}
Since
\begin{equation}
\exp\Big(O\big(\eta[W_0^{(\eta)}]^2\big)s\Big)\leq \exp\Big(O\big(\eta[W_0^{(\eta)}]^2\big)T_\eta^*\Big)\leq \exp\Big(O\big(\eta W_0^{(\eta)}\big)\Big)=O(e^\theta),
\end{equation}
by choosing $\theta$ sufficiently small, we have
\begin{equation}
1-\varepsilon\leq\exp\Big(O\big(\eta[W_0^{(\eta)}]^2\big)s\Big)\leq 1+\varepsilon.
\end{equation}
Thus, back to \eqref{w7ode}, it holds
\begin{equation}\label{deqw7}
\begin{split}
\frac{d}{ds}\Big[\exp\Big(O\big(\eta[W_0^{(\eta)}]^2\big)s\Big)w_7\Big]
\leq Cw_7^2+O\big(\eta^2[W_0^{(\eta)}]^4\big).
\end{split}
\end{equation}
Integrating \eqref{deqw7} along $\mathcal{C}_7$, we obtain
\begin{equation}\label{621}
\begin{split}
\check{W}_7'&\leq O\big(w_7(z,0)+t\check{W}_7'^2+t\eta^2[W_0^{(\eta)}]^4\big)\\
&\leq O\big([W_0^{(\eta)}]^2+t\check{W}_7'^2+\eta^2[W_0^{(\eta)}]^3\big)\\
&\leq O\big([W_0^{(\eta)}]^2+t\check{W}_7'^2\big).
\end{split}
\end{equation}
Now we introduce an additional bootstrap assumption
\begin{equation} \label{boottcw}
t\check{W}_7'\leq \theta^{\frac12}.
\end{equation}
Then by \eqref{621}, it holds
\begin{equation} \label{Wcc7}
\check{W}_7'\leq  O\big([W_0^{(\eta)}]^2\big)
\end{equation}
and the bootstrap assumption \eqref{boottcw} can be improved by
\begin{equation}
t\check{W}_7'\leq  O(W_0^{(\eta)})=O(\theta)< \theta^{\frac12}.
\end{equation}
\end{proof}
Moreover, we show that the shock formation \underline{only} occurs in $\mathcal{R}_1$ by deriving positive lower bounds for $\rho_2,\cdots,\rho_7$. This concludes the proof of Theorem \ref{shock}.
\begin{prop}\label{wave dynamics}
 For $t< T_\eta^*$, we have $\{\rho_i\}_{i=2,\cdots,7}$ are all bounded away from zero in the whole $(x,t)$-plane, i.e., $\rho_i>C>0$ for a uniform constant $C$.
\end{prop}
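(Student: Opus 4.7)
The plan is to treat the indices $i\in\{2,\dots,7\}$ according to whether the $i^{\text{th}}$ characteristic is linearly degenerate or genuinely nonlinear, and in each case to apply a Gr\"onwall-type argument to the equation \eqref{eqrho} on the time interval $[0,T_\eta^*]$, using crucially that $T_\eta^*=O\big(1/W_0^{(\eta)}\big)$ by \eqref{Tshock} and that all coefficients $c_{im}^i$ are of order $1$ by Proposition \ref{coeff}.

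For the linearly degenerate indices $i\in\{2,4,6\}$, Lemma \ref{vanishing} (and the direct computation for $i=4$) gives $c_{ii}^i\equiv 0$, so \eqref{eqrho} reduces to a homogeneous transport equation
\[
\partial_{s_i}\log\rho_i \;=\; \sum_{m\neq i}c_{im}^iw_m,
\]
(with, for $i=2,6$, a contribution from the boxed terms $\rho_2 w_3$ and $\rho_6 w_5$ which are also linear in $\rho_i$ with an $O(1)$ coefficient). Integrating along $\mathcal{C}_i$ from $z_i\in[-2\eta,2\eta]$, one has $\rho_i(z_i,t)=\exp\!\big(\int_0^t\sum_{m\neq i}O(1)\,w_m\,ds_i\big)$. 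Each integral $\int_0^{T_\eta^*}|w_m(X_i(z_i,s),s)|\,ds$ is controlled: whenever $\lambda_i\neq\lambda_m$ I switch to bi-characteristic coordinates $(y_i,y_m)$ and obtain $\int|w_m|\,ds=\int|v_m|/|\lambda_i-\lambda_m|\,dy_m=O(\eta J)=O(\eta W_0^{(\eta)})$ by Proposition \ref{upperbd}; for the degenerate pairs $(\lambda_2,\lambda_3)$ and $(\lambda_5,\lambda_6)$ where bi-characteristic coordinates fail, I use $w_3\big|_{\mathcal{R}_2}, w_5\big|_{\mathcal{R}_6}\leq\check{W}=O(\eta[W_0^{(\eta)}]^2)$ and the crude bound $\int_0^{T_\eta^*}\check{W}\,ds\leq T_\eta^*\check{W}=O(\eta W_0^{(\eta)})=O(\theta)$. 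Altogether $|\log\rho_i|=O(\theta)$, so $\rho_i\geq 1-\varepsilon>0$.

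For the genuinely nonlinear indices $i\in\{3,5,7\}$, the equation \eqref{eqrho} contains the extra term $c_{ii}^i v_i$. The extra ingredient I need is that $v_i$ stays small. Integrating \eqref{eqv} along $\mathcal{C}_i$ exactly as in \eqref{79}, and using that by \eqref{Wr} the initial datum obeys $|w_i^{(\eta)}(z_i,0)|\leq C\eta[W_0^{(\eta)}]^2$ for $i=3,5,7$, I obtain
\[
v_i(z_i,t)=w_i^{(\eta)}(z_i,0)+O\big(\eta[W_0^{(\eta)}]^2\big)=O\big(\eta[W_0^{(\eta)}]^2\big).
\]
Hence $\int_0^{T_\eta^*}|c_{ii}^i v_i|\,ds=O(\eta[W_0^{(\eta)}]^2\cdot T_\eta^*)=O(\eta W_0^{(\eta)})=O(\theta)$, and the same Gr\"onwall argument as in \eqref{381}--\eqref{y38} yields
\[
\rho_i(z_i,t)\geq (1-\varepsilon)\Big(1-O(\theta)\Big)\geq 1-2\varepsilon>0.
\]

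The main obstacle, just as in the a priori estimates of Subsection \ref{apes}, is controlling the strong interaction terms $\boxed{\rho_2 w_3}$ in the equation for $\rho_2$ and $\boxed{\rho_6 w_5}$ in the equation for $\rho_6$, since here the two interacting characteristic families have almost-equal speeds and the bi-characteristic transformation is singular. The resolution, as indicated above, is that although $w_3$ may be of size $\check W$ on long segments of $\mathcal{C}_2\cap\mathcal{R}_3$, the bound $\check W=O(\eta[W_0^{(\eta)}]^2)$ from Proposition \ref{upperbd} together with $T_\eta^*=O(1/W_0^{(\eta)})$ makes the time-integrated contribution only $O(\theta)$, small enough to be absorbed by Gr\"onwall. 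This gives the desired uniform positive lower bound $C>0$ for each $\rho_i$, $i=2,\dots,7$, and completes the proof.
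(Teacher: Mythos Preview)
Your argument is correct and follows essentially the same Gr\"onwall strategy as the paper; in fact you are more explicit than the paper about the key refinement needed for the genuinely nonlinear indices, namely that the smallness of the initial data in \eqref{Wr} forces $v_i=O(\eta[W_0^{(\eta)}]^2)$ for $i\in\{3,5,7\}$ (the paper only says ``employing the estimates of $v_i$'' without spelling this out). Your split into linearly degenerate versus genuinely nonlinear indices is a clean way to organize the same computation.

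One omission: you restrict throughout to $z_i\in[-2\eta,2\eta]$, i.e.\ to characteristics inside $\mathcal{R}_i$, whereas the proposition asks for a lower bound in the whole $(x,t)$-plane. The paper treats the exterior case $z_i\notin[-2\eta,2\eta]$ separately: there $|w_i|\leq V$ along the entire characteristic, so the equation \eqref{eqrho} becomes simply $\partial_{s_i}\log\rho_i=\sum_k c_{ik}^i w_k$ with the $k=i$ term now harmless, and the same bi-characteristic/$\check W$ bookkeeping you already used gives $\int_0^{T_\eta^*}|\sum_k w_k|\,ds=O(\theta)$, hence $\rho_i\geq e^{-O(\theta)}\geq 1-\varepsilon$. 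This is strictly easier than the interior case you handled, but it should be stated to match the claim.
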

\begin{proof}
To derive a uniform lower bound for $\rho_i$, we start from estimates inside $\mathcal{R}_i$. Similarly to \eqref{y38}, via estimating \eqref{eqrho}, we have
\begin{equation*}
\begin{split}
 \rho_i(z_i,t)\geq(1-\varepsilon)\Big(1-(1+\varepsilon)^2|c_{ii}^i(0)|\int_0^t|v_i(z_i,t')|dt'\Big).
\end{split}
\end{equation*}
Similar as in \eqref{79}\eqref{vlu}, and noting that the initial data of other $w_i$ (with $i\neq 1$) are smaller than that of $w_1$ (see \eqref{Wr}), we derive the following improved estimates: 
\begin{equation}\label{other vi}
 v_i(z_i,t)=w_i^{(\eta)}(z_i,0)+O(tVJ+tV^2S)=O(\eta [W_0^{(\eta)}]^2).   
\end{equation}
Employing the estimates \eqref{other vi} of $v_i$, one can then bound $\rho_i$ from below in $\mathcal{R}_i$.

We then prove that $\{\rho_i\}_{i=1,\cdots,7}$ obey a positive lower bound outside $\mathcal{R}_i$. For the equation \eqref{eqrho} of $\rho_i$, applying the Gr\"{o}nwall's inequality and estimates obtained in Section \ref{pfthm1.2}, one can obtain that for any $z_i\notin [\eta,2\eta],\ t\in[t_0^{(\eta)},T_\eta^*)$,
\begin{equation}
  \rho_i(z_i,t)\geq\exp\Big(-\int_0^t\Big|O\Big(\sum_k w_k(z_i,s)\Big)\Big|ds\Big)\geq 1-\varepsilon.
\end{equation}
Interested readers are also referred to Proposition 8.1 in \cite{an} for a more detailed discussion.
\end{proof}

\section{Proof of $H^2$ ill-posedness (Theorem \ref{3D})} \label{pfill}
In this section, we use the above shock formation to prove $H^2$ ill-posedness for 3D ideal MHD equations \eqref{MHD}. We first construct a family of initial data which will lead to the ill-posedness. In particular, we employ the aforementioned shock-formation argument only in a region where the initial data enjoy plane symmetry. We denote this part of data as $w_i^{(\eta)}(z,0)$. For each $\eta$, we will prove that there is a corresponding shock formation at finite time $T_\eta^*$. The meaning of the ill-posedness is then as follows: for the constructed sequence of initial data with $H^2$ norms tending to $0$ as $\eta\to 0$, the $L^2$ norm of the solution $w_1$ will blow up at $T_\eta^*$ for each $\eta$ due to shock formation.

\subsection{Initial data designed for proving ill-posedness} \label{illdata}
Our first step is to select the initial data. Counterexamples of local well-posedness will evolve from them.

As mentioned in Remark \ref{nonsym data}, the initial datum $\hat{w}_1(x,x_2,x_3)$ is independent of $x_2$, $x_3$ in a small ball $B_{\frac\eta2}^3$, but not planar symmetric in the exterior region. Here, $B_{\frac\eta2}^3$ denotes the 3D ball of radius $\frac\eta2$ centered at $(\frac{3\eta}{2},0,0)$. Let $\mathcal{D}_\eta$ be the development region of $B_{\frac\eta2}^3$ along the $1^{\text{st}}$ family of characteristics. Within $\mathcal{D}_\eta$, owing to finite propagating-speed property of hyperbolic system, the planar symmetry is preserved, and we employ the 1+1 dimensional shock-formation argument to derive the desired ill-posedness. Initial data outside of $B_{\frac\eta2}^3$ are constructed to be compactly supported. We multiply the planar symmetric profile with a suitable cut-off function depending on all the three variables $x,x_2,x_3$. In particular, we prescribe 
	\begin{equation} \label{data}
		w_1^{(\eta)}(x,x_2,x_3,0)=\theta |\ln (x)|^\alpha \mathcal{X}(\frac{x}{\eta})\psi\Big(\frac{|\ln(x)|^\delta x_2}{\sqrt{x}}\Big)\psi\Big(\frac{|\ln(x)|^\delta x_3}{\sqrt{x}}\Big),
	\end{equation}
	where $0<|\delta|<1$ and
	$$\mathcal{X}(x)=\left\{\begin{split}
		&1,\quad x\in[\frac65,\frac95],\\
		&0,\quad x\in(-\infty, 1]\cup [2,+\infty),
	\end{split}\right.
	\qquad \psi(x)=\left\{\begin{split}
		&1,\quad |x|\leq \frac14,\\
		&0,\quad |x|\geq\frac12,
	\end{split}\right.
	$$ 
	and $\mathcal{X}_\eta(x)=\mathcal{X}(\frac{x}{\eta})$. Following the construction in our result \cite{an3} for 2D ideal MHD and Euler systems, by choosing $\theta\sim|\ln\eta|^{2\delta-\alpha} $ and $0<|\delta|<1$, we obtain the desired initial data in $H^1(\mathbb{R}^3)$ such that $\|w_1^{(\eta)}(\cdot,0)\|_{H^1(\mathbb{R}^3)}\to 0$ as $\eta \to 0$. Moreover, regarding the magnetic field, to accommodate the constraint $\nabla \cdot H=0$, the component $H_1$ cannot remain trivial anymore in the exterior region due to the absence of plane symmetry. Instead, $H_1$ must be determined by the constraint equation, together with the obtained initial data of $H_2$ and $H_3$. For more details about the construction, interested readers are referred to our 2D work \cite{an3}. Next, for $k=2,\cdots,7$, we set
\begin{equation} \label{data2}
w_k^{(\eta)}(x,x_2,x_3,0)=\eta\theta^2 \mathcal{X}(\frac{x}{\eta})\psi\Big(\frac{|\ln(x)|^\delta x_2}{\sqrt{x}}\Big)\psi\Big(\frac{|\ln(x)|^\delta x_3}{\sqrt{x}}\Big),
\end{equation}
which are much smaller than initial data of $w^{(\eta)}_1$. One can verify that these initial data satisfy all the requirements in Section \ref{pfthm1.2}.

Recall that
\begin{equation} \label{combine}
\Phi_x(x,t)=\sum_{k=1}^7 w_k(x,t)r_k(\Phi).
\end{equation}
Now we go back to the original system \eqref{pMHD} by reversing the process of wave decomposition. The initial data of the original variables $\Phi$ are denoted as $\Phi(x,0)=\hat\Phi_0(x,x_2,x_3)$. According to \eqref{combine}, the initial data satisfy
\begin{equation} \label{combo2}
\partial_x\hat\Phi_{0}(x,x_2,x_3)=\sum_{k=1}^7 \hat w_k(x,x_2,x_3)r_k\big(\Phi(x,0)\big).
\end{equation}
Since $r_k$ constructed in \eqref{regv} is \underline{Lipschitz continuous} in $\Phi$, by a standard ODE argument, given the aforementioned $\hat w_k(x,x_2,x_3)$, one can solve the ODE system \eqref{combo2} for $\hat\Phi_{0}$. Moreover, because $r_k(\Phi)\in L^\infty( B_\delta^7(0))$ and $\hat w_k\in H^1(\mathbb{R}^3)$, we have $\hat\Phi_0\in \dot{H}^2(\mathbb{R}^3)$. These constitute the initial data for the 3D ideal compressible MHD equations \eqref{pMHD}.

\subsection{Estimate for $\partial_{z_1}\rho_1$}\label{supbound}
In this subsection, we derive a uniform bound for $|\partial_{z_1}\rho_1|$. As we will see in Subsection \ref{ill}, this upper bound plays a crucial role in proving the desired ill-posedness result.

For a fixed $\eta>0$, we employ characteristic and bi-characteristic coordinates introduced in \eqref{flow}-\eqref{dense} for $i\neq j$:
\begin{equation*}
  (x,t)=\big(X_i(z_i,s_i),s_i\big)=\big(X_i(y_i,t'(y_i,y_j)),t'(y_i,y_j)\big)=\big(X_j(y_j,t'(y_i,y_j)),t'(y_i,y_j)\big).
\end{equation*}
As discussed in Section \ref{pre}, the transformations between these coordinates satisfy
\begin{equation}\label{92}
  \partial_{y_i}=\frac{\rho_i}{\lambda_j-\lambda_i}\partial_{s_j}=\partial_{z_i}+\frac{\rho_i}{\lambda_j-\lambda_i}\partial_{s_i}.
\end{equation}
Along $\mathcal{C}_1$ we fix $y_1$ and set $y_7$ as a parameter. From \eqref{92}, we have
\begin{equation} \label{coordtrans}
  \partial_{y_1}=\partial_{z_1}+\frac{\rho_1}{\lambda_7-\lambda_1}\partial_{s_1}=\partial_{z_1}-\frac{\rho_1}{2C_f}\partial_{s_1},
\end{equation}
where $C_f$ is the fast wave speed defined in \eqref{charac speeds}. We now state the main result of this subsection.
\begin{prop} \label{dzrho1}
Fix $\eta$, for all $z_i\in\mathbb{R}$ and any $t<T_\eta^*$, there holds $|\partial_{z_1}\rho_1|\leq C_\eta$ for a positive constant $C_\eta$.
\end{prop}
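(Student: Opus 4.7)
The plan is to commute $\partial_{z_1}$ through the transport equation (5.2) for $\rho_1$ and close a Gr\"{o}nwall-type estimate along $\mathcal{C}_1$ using the a priori bounds of Proposition 5.1 together with the lower bounds on $\rho_i$ for $i\neq 1$ from Proposition 5.3. Differentiating (5.2) in $z_1$ (which commutes with $\partial_{s_1}$ in characteristic coordinates) yields
\begin{equation*}
\partial_{s_1}(\partial_{z_1}\rho_1) = \Big(\sum_{m\neq 1}c^1_{1m}w_m\Big)\partial_{z_1}\rho_1 + c^1_{11}\partial_{z_1}v_1 + \rho_1\sum_{m\neq 1}c^1_{1m}\partial_{z_1}w_m + \mathcal{E},
\end{equation*}
where the remainder $\mathcal{E}$ collects chain-rule terms such as $v_1(\partial_{z_1}c^1_{11})$ and $\rho_1(\partial_{z_1}c^1_{1m})w_m$. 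Because $\partial_{z_1}c^1_{ij}=\rho_1\,\nabla_\Phi c^1_{ij}\cdot\sum_k w_k r_k$, and because $\rho_1 w_1=v_1=O(W_0^{(\eta)})$ while $|w_k|=O(W_0^{(\eta)})$ for $k\neq 1$, the coefficient of $\partial_{z_1}\rho_1$ is of size $O(W_0^{(\eta)})$ and $\mathcal{E}$ is uniformly bounded on $[0,T_\eta^*)$.

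To close the estimate I still need to bound $\partial_{z_1}v_1$ and the cross derivatives $\partial_{z_1}w_m$ for $m\neq 1$. For $\partial_{z_1}v_1$ I would commute $\partial_{z_1}$ through (5.4), obtaining a second linear ODE along $\mathcal{C}_1$ whose coefficient is again of size $O(W_0^{(\eta)})$ and whose initial value $\partial_{z_1}w_1^{(\eta)}(z_1,0)$ is bounded by an $\eta$-dependent constant $K_\eta$: for any fixed $\eta>0$, the mollified convolution $\theta\int\zeta_{\eta/10}(y)|\ln(z-y)|^{\alpha}\chi(z-y)\,dy$ in (7.1) is smooth in $z$ and its derivative is bounded on $\mathbb{R}$. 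For the cross derivatives I would use the geometric identity $\partial_{z_1}w_m=\rho_1\partial_x w_m=(\rho_1/\rho_m)\partial_{z_m}w_m$; Proposition 5.3 supplies $\rho_m\geq C>0$ wherever these terms appear, so the task reduces to bounding $\partial_{z_m}w_m$. The latter is obtained by commuting $\partial_{z_m}$ with (5.3); since $w_m$ does not blow up for $m\neq 1$, this ODE has bounded coefficients, and the only delicate source is $\partial_{z_m}w_1=(\rho_m/\rho_1)\partial_{z_1}w_1$, which is harmless on the $\mathcal{C}_m$-characteristics of interest because the separation of the strips, combined with Proposition 5.3, keeps $\rho_1$ uniformly positive there.

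Chaining the three Gr\"{o}nwall estimates in the order $\partial_{z_m}w_m\to\partial_{z_1}v_1\to\partial_{z_1}\rho_1$ then yields $|\partial_{z_1}\rho_1|\leq C_\eta$ on $\mathbb{R}\times[0,T_\eta^*)$. The main obstacle I anticipate is the circular coupling among these derivatives, in particular the factor $1/\rho_1$ that appears in $\partial_{z_1}w_1$ and would blow up were it to arise inside $\mathcal{R}_1$. The structural fact that resolves this is that the shock is confined to the single characteristic $\mathcal{C}_1(z_0)\subset\mathcal{R}_1$, while every problematic $1/\rho_1$ factor created during the commutation either lives on a different family of characteristics (along which $\rho_1$ is bounded below by Proposition 5.3), or appears paired with an extra $\rho_1$ from the wave decomposition that cancels its singularity. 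Tracking this bookkeeping carefully, together with the $\eta$-dependent initial-derivative bound $K_\eta$, gives the stated uniform constant $C_\eta$.
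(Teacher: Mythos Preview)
Your approach has a genuine gap in the treatment of the cross derivatives. You reduce the source $\rho_1\sum_{m\neq 1}c_{1m}^1\partial_{z_1}w_m$ to controlling $\partial_{z_m}w_m$ along $\mathcal{C}_m$, and then claim that the dangerous contribution $\partial_{z_m}w_1=(\rho_m/\rho_1)\partial_{z_1}w_1$ is ``harmless on the $\mathcal{C}_m$-characteristics of interest because $\ldots$ $\rho_1$ is uniformly positive there.'' But the $\mathcal{C}_m$-characteristics you actually need are precisely those reaching the point in $\mathcal{R}_1$ where you are evaluating $\partial_{z_1}\rho_1$; these characteristics are inside $\mathcal{R}_1$ near the shock, where $\rho_1$ is \emph{not} bounded below (Proposition~5.3 gives lower bounds only for $\rho_i$, $i\neq 1$, and for $\rho_1$ outside $\mathcal{R}_1$). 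Worse, $\partial_{z_1}w_1=\partial_{z_1}(v_1/\rho_1)$ already carries a $1/\rho_1^2$ factor, so the source $\partial_{z_m}w_1\sim 1/\rho_1^{3}$; even after the bi-characteristic change $dt'=\rho_1\,dy_1/(\lambda_m-\lambda_1)$ the integrand is $\sim 1/\rho_1^{2}$, which is not uniformly integrable up to $T_\eta^*$. The Gr\"onwall chain therefore yields at best a $t$-dependent bound that blows up as $t\to T_\eta^*$, whereas the proposition demands a constant $C_\eta$ independent of $t$ --- and this uniformity is exactly what is used in the $H^1$ blow-up argument of Section~\ref{ill}.

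The paper avoids the circular coupling altogether by working in bi-characteristic coordinates $(y_1,y_7)$. Setting $\tau:=\partial_{y_1}\rho_1$ and $\pi:=\partial_{y_1}v_1$ and evolving them along $\partial_{y_7}=\frac{\rho_7}{2C_f}\partial_{s_1}$, the cross-family derivatives $\partial_{y_1}\rho_m$, $\partial_{y_1}v_m$ (for $m\neq 1$) are rewritten, via the identity $\partial_{y_1}=\frac{\rho_1}{\lambda_m-\lambda_1}\partial_{s_m}$, directly in terms of the transport equations \eqref{eqrho} and \eqref{eqv}. These are \emph{algebraic} expressions in the already-controlled quantities $v_1,\rho_m,w_k$ --- no further differentiation, and in particular no appearance of $\partial w_1$. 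One obtains a closed linear $2\times 2$ ODE for $(\tau,\pi)$ with uniformly bounded coefficients and $\eta$-bounded data; Gr\"onwall gives the uniform bound, and $\partial_{z_1}\rho_1=\tau+\frac{\rho_1}{2C_f}\partial_{s_1}\rho_1$ transfers it back. The essential idea you are missing is that the right commuted variable is not $\partial_{z_1}$ but the bi-characteristic derivative, which turns every cross-family hit into a known transport quantity rather than a new spatial derivative.
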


\begin{proof}
Using \eqref{coordtrans}, we get
\begin{equation}\label{rhod}
 \partial_{z_1}\rho_1=\partial_{y_1}\rho_1+\frac{\rho_1}{2C_f}\partial_{s_1}\rho_1.
\end{equation}

To bound $\partial_{z_1}\rho_1$, we start with controlling $\partial_{y_1}\rho_1$. Let
\begin{equation*}
\tau_{1} ^{(7)}:=\partial_{y_1}\rho_1,\quad\pi_{1}^{(7)}:=\partial_{y_1}v_1.
\end{equation*}
Since
\begin{equation}\label{96}
  \partial_{y_7}=\frac{\rho_7}{\lambda_1-\lambda_7}\partial_{s_1}=\frac{\rho_7}{2C_f}\partial_{s_1},
\end{equation}
we have that $\tau_{1}^{(7)}$ obeys
\begin{equation}\label{97}
\begin{split}
  \partial_{y_7}\tau_{1}^{(7)}=&\partial_{y_1}\partial_{y_7}\rho_1=\partial_{y_1}\Big(\frac{\rho_7}{2C_f}\partial_{s_1}\rho_1\Big)=\partial_{y_1}\Big(\frac{\rho_1\rho_7}{2C_f}\sum_m c_{1m}^1w_m\Big)\\
  =&\frac{\rho_7}{2C_f}\Big(c_{11}^1\partial_{y_1}v_1+\sum_{m\neq1}c_{1m}^1w_m \partial_{y_1}\rho_1\Big)+\frac{\rho_1}{2C_f}\Big(\sum_{m\neq7}c_{1m}^1w_m\partial_{y_1}\rho_7+c_{17}^1\partial_{y_1}v_7\Big)\\
  &+\frac{\rho_1\rho_7}{2C_f}\Big(\sum_{m\neq1,m\neq7}c_{1m}^1\frac1{\rho_m}\partial_{y_1}v_m-\sum_{m\neq1,m\neq7}c_{1m}^1\frac{w_m}{\rho_m}\partial_{y_1}\rho_m\Big)\\
  &-\frac{\partial_{y_1}C_f}{2C_f^2}\rho_1\rho_7\sum_m c_{1m}^1w_m+\frac{\rho_1\rho_7}{2C_f}\sum_m \partial_{y_1}c_{1m}^1w_m.
\end{split}
\end{equation}
We still need to estimate $\partial_{y_1}C_f,\partial_{y_1}c_{1m}^1,\partial_{y_1}\rho_m$ and $\partial_{y_1}v_m$ in \eqref{97}. Note that for $C_f$, its derivative satisfies
\begin{equation}\label{9.11}
  \begin{split}
    \partial_{y_1}C_f=&\nabla_\Phi(\lambda_1-u_1)\cdot\partial_{y_1}\Phi=\nabla_\Phi\lambda_1\cdot[\partial_{s_7}X_7\partial_{y_1}t'\partial_{x}\Phi+\partial_{y_1}t'\partial_{t}\Phi]\\
    =&\nabla_\Phi(\lambda_1-u_1)\cdot\Big[\lambda_7\frac{\rho_1}{\lambda_7-\lambda_1}\sum_kw_kr_k+\frac{\rho_1}{\lambda_7-\lambda_1}(-A(\Phi)\sum_kw_kr_k)\Big]\\
    =&O\Big(v_1+\rho_1\sum_{k\neq1}w_k\Big).
  \end{split}
\end{equation}
Similarly, for $c^1_{1m}$, we also have
\begin{equation}\label{9.12}
  \begin{split}
    \partial_{y_1}c_{1m}^1=&\nabla_\Phi c_{1m}^1\cdot\partial_{y_1}\Phi=\nabla_\Phi c_{1m}^1\cdot[\partial_{s_7}X_7\partial_{y_1}t'\partial_{x}\Phi+\partial_{y_1}t'\partial_{t}\Phi]\\
    =&\nabla_\Phi c_{1m}^1\cdot\Big[\lambda_7\frac{\rho_1}{\lambda_7-\lambda_1}\sum_kw_kr_k+\frac{\rho_1}{\lambda_7-\lambda_1}(-A(\Phi)\sum_kw_kr_k)\Big]\\
    =&O\Big(v_1+\rho_1\sum_{k\neq1}w_k\Big).
  \end{split}
\end{equation}
By \eqref{92}, it holds
\begin{equation}\label{9.14}
  \partial_{y_1}\rho_m=\frac{\rho_1}{\lambda_m-\lambda_1}\partial_{s_m}\rho_m=O\Big(\rho_mv_1+\rho_1\rho_m\sum_{k\neq 1}w_k\Big) \quad \text{when}\quad m\neq 1,
\end{equation}
and
\begin{equation}\label{9.15}
  \partial_{y_1}v_m=\frac{\rho_1}{\lambda_m-\lambda_1}\partial_{s_m}v_m=O\Big(\rho_mv_1\sum_{k\neq1}w_k+\rho_1\rho_m\sum_{j\neq1,k\neq1\atop j\neq k}w_jw_k\Big)\quad \text{when}\quad m\neq 1.
\end{equation}
Together with the estimates obtained in Proposition \ref{upperbd}, \eqref{9.11}-\eqref{9.15} imply that \eqref{97} could be rewritten as
\begin{equation}\label{linear}
\begin{split}
  \partial_{y_7}\tau_{1}^{(7)}=B_{11}^\eta\tau_1^{(7)}+B_{12}^\eta\pi_1^{(7)}+B_{13}^\eta,
\end{split}
\end{equation}
where $B_{11}^\eta,B_{12}^\eta,B_{13}^\eta$ are uniformly bounded constants depending on $\eta$.

In the same fashion, we obtain the evolution equation for $\pi_1^{(7)}$:
\begin{equation}\label{917}
  \begin{split}
  \partial_{y_7}\pi_{1}^{(7)}
  =&\frac{\rho_7 }{2C_f}\Big(\sum_{p\neq 1,q\neq 1\atop p\neq q}\gamma_{pq}^1w_pw_q \tau_{1}^{(7)}+\sum_{p\neq 1}\gamma_{1p}^1w_p\pi_{1}^{(7)}\Big)\\
  &-\frac{\partial_{y_1}C_f}{4C_f^2}\Big(\sum_{p\neq 1}\gamma_{1p}^1 w_p v_1\rho_7+\sum_{p\neq 1,q\neq 1\atop p\neq q}\gamma_{pq}^1w_pw_q\rho_7\rho_1\Big)\\
    &+\frac{\rho_7\rho_1}{2C_f}\Big(\sum_{p\neq 1}\partial_{y_1}\gamma_{1p}^1 w_p w_m+\sum_{p\neq 1,q\neq 1\atop p\neq q}\partial_{y_1}\gamma_{pq}^1w_pw_q\Big)\\
    &+\frac{\rho_1}{2C_f}\Big(\sum_{p\neq 1}\gamma_{1p}^1 w_p w_1+\sum_{p\neq 1,q\neq 1\atop p\neq q}\gamma_{pq}^1w_pw_q\Big)\partial_{y_1}\rho_7\\
    &+\frac{\rho_7\rho_1}{2C_f }\Big(\sum_{p\neq 1,p\neq7}\gamma_{1p}^1\frac{w_1}{\rho_p}+\sum_{p\neq 1,q\neq 1\atop p\neq q}\gamma_{pq}^1\frac{w_q}{\rho_p}\Big)(\partial_{y_1}v_p-w_p\partial_{y_1}\rho_p)\\
    &+\frac{\rho_7\rho_1}{2C_f }\sum_{p\neq 1,q\neq 1\atop p\neq q}\gamma_{pq}^1\frac{w_p}{\rho_q}(\partial_{y_1}v_q-w_q\partial_{y_1}\rho_q)
    +\frac{\rho_1}{2C_f }\gamma_{17}^1 w_1\partial_{y_1}v_7\\
    =&B_{21}^\eta\tau_1^{(7)}+B_{22}^\eta\pi_1^{(7)}+B_{23}^\eta,
  \end{split}
\end{equation}
where one can check as in \eqref{linear} $B_{21}^\eta,B_{22}^\eta,B_{23}^\eta$ are also uniformly bounded.

Next, we check the initial data of $\tau_1^{(7)}$ and $\pi_m^{(7)}$. Since $\rho_1(z_1,0)=1,$ for fixed $\eta$, it follows from \eqref{92} that
\begin{equation*}
  \begin{split}
    \tau_1^{(7)}(z_1,0)=&\partial_{z_1}\rho_1(z_1,0)-\frac{\rho_1(z_1,0)}{2C_f}\partial_{s_1}\rho_1(z_1,0)\\
     =&-\frac{1}{2C_f}\sum_{k}c_{1k}^1 w_k^{(\eta)}(z_1,0)=O(W_0^{(\eta)})<+\infty.
  \end{split}
\end{equation*}
And with $v_1(z_1,0)=w_1^{(\eta)}(z_1,0)$, we similarly deduce
\begin{equation*}
  \begin{split}
    \pi_1^{(7)}(z_1,0)=&\partial_{z_1}v_1(z_1,0)-\frac{\rho_1(z_1,0)}{2C_f}\partial_{s_1}v_1(z_1,0)\\
    =&\partial_{z_1}w_1^{(\eta)}(z_1,0)-\frac{1}{2C_f}\sum_{q\neq 1,q\neq p}\gamma_{pq}^1w_p(z_1,0)w_q(z_1,0)\rho_1(z_1,0)\\
    =&O(\partial_{z_1}w_1^{(\eta)}(z_1,0)+[W_0^{(\eta)}]^2)<+\infty.
  \end{split}
\end{equation*}
Applying Gr\"onwall's inequality to \eqref{linear} and \eqref{917}, for $t< T_\eta^*$ we deduce that $\tau_{1}^{(7)}:=\partial_{y_1}\rho_1$ is bounded by a constant depending on $\eta$.

Back to \eqref{rhod}, we hence prove
\begin{equation*}
  \partial_{z_1}\rho_1=\partial_{y_1}\rho_1+O(v_1+\sum_{m\neq 1}w_m\rho_1).
\end{equation*}
With the bounds for $J(t)$, $S(t)$ and $V(t)$ in Section \ref{bootargu}, consequently, we arrive at the conclusion that $\partial_{z_1}\rho_1$ is bounded by $C_\eta$, which is a constant depending on $\eta$. Note that the dependence on $\eta$ causes no trouble since the blow-up is derived for each fixed $\eta$ in Subsection \ref{ill}.
\end{proof}

\subsection{Ill-posedness mechanism} \label{ill}
We prove our main result Theorem \ref{3D} in this section. For our proof, one can see that the ill-posedness is driven by the shock formation.
\begin{proof}[{Proof of Theorem \ref{3D}}]
 With initial data \eqref{data} and \eqref{data2}, employing the estimates in Subsection \ref{apes} and Subsection \ref{bootargu}, we know that a shock forms in $\mathcal{R}_1$ at time $T_\eta^*$ with
\begin{equation} \label{weakill}
 \frac{1}{(1+\varepsilon)^3|c_{11}^1(0)|W_0^{(\eta)}}\leq T_\eta^*\leq\frac{1}{(1-\varepsilon)^3|c_{11}^1(0)|W_0^{(\eta)}}.
\end{equation}
By \eqref{data}, we see that $w_1^{(\eta)}(z,0)=\theta \cdot O(|\ln z|^\alpha)$ with $0<\alpha<\frac{1}{2}$. %As $\eta\to 0$, the support of the initial data shrinks to the origin and hence $|w_1^{(\eta)}(z,0)|\to \infty$. It then follows from the definition of $W_0^{(\eta)}$ in \eqref{W0} and the above estimate \eqref{weakill} that $T_\eta^*\to 0$.

Furthermore, we will show the $H^1$-norm of solutions to \eqref{MHD} blows up at $t=T_\eta^*$. In particular, we give a lower bound estimate of the $H^2$-norm in the following spatial region
\begin{equation} \label{region}
\Omega_{T_\eta^*}\approx\{(x,x_2,x_3,T_\eta^*): x=X_1(z,T_\eta^*),\ (x-\lambda_1T_\eta^*-\frac{3\eta}2)^2+x_2^2+x_3^2\leq\frac{\eta^2}{4},  \  \text{for}\ \eta\leq z\leq2\eta \}
\end{equation}
where $\Omega_{T_\eta^*}$ denotes the development (at time $T_\eta^*$) evolving from the planar symmetric portion $B_{\frac\eta2}^3$ of the data.

To calculate $\int_{\Omega_{T_\eta^*}}|w_1(x,x_2,x_3,T_\eta^*)|^2dxdx_2dx_3$, we first compute the area element $\int dx_2 dx_3$. By the definition of characteristics, for a characteristic line $\mathcal{C}_1$ starting from $z$, its terminal at time $T_\eta^*$ is $x=z+\lambda_1T_\eta^*$. Thus, one can verify that the area of $\Omega_{T_\eta^*}$ is
\begin{equation}
\begin{split}
\int_{x_2^2+x_3^2\leq\frac{\eta^2}{4}-(x-\lambda_1T_\eta^*-\frac{3\eta}{2})^2} dx_2 dx_3
=&\pi\Big[\frac{\eta^2}{4}-(z-\frac{3\eta}{2})^2\Big]=\pi(z-\eta)(2\eta-z).
\end{split}
\end{equation}
Employing characteristic coordinates, we obtain
\begin{equation} \label{w1l2}
\|w_1\|_{L^2(\Omega_{T_\eta^*})} ^2=\pi\int_{\frac\eta2}^\eta\big|\frac{v_1}{\rho_1}(z,T_\eta^*)\big|^2\rho_1(z,T_\eta^*)(z-\eta)(2\eta-z)dz.
\end{equation}
Recall  the estimate of  $v_1$ in \eqref{79}
\begin{equation} \label{v1low}
  v_1(z,t)=w_1^{(\eta)}(z,0)+O(\eta [W_0^{(\eta)}]^2).
\end{equation}
We integrate in a subinterval $(z_0,z_0^*]\subseteq[\eta,2\eta]$ where $|w_1^{(\eta)}(z,0)|>\frac12W_0^{(\eta)}$ for $z\in (z_0,z_0^*]$. Then in this subinterval, it follows from \eqref{v1low} that $|v_1(z,t)|$ admits a lower bound $|v_1(z,t)|\geq \frac14W_0^{(\eta)}$. Applying this in \eqref{w1l2} and using the fact that $\rho_1(z_0,T_\eta^*)=0$, we get
\begin{equation} \label{normblowup}
\begin{split}
\|w_1\|_{L^2(\Omega_{T_\eta^*})} ^2&\geq C\int_{z_0}^{z_0^*}\big|\frac{v_1}{\rho_1}(z,T_\eta^*)\big|^2\rho_1(z,T_\eta^*)(z-\eta)(2\eta-z)dz\\
&\geq C(W_0^{(\eta)})^2 (z_0-\eta)(2\eta-z_0^*)\underbrace{\int_{z_0}^{z_0^*}\frac{1}{\rho_1(z,T_\eta^*)-\rho_1(z_0,T_\eta^*)}dz}_{\text{by $\rho_1(z_0,T_\eta^*)=0$}}\\
&\geq C(W_0^{(\eta)})^2 (z_0-\eta)(2\eta-z_0^*)\underbrace{\int_{z_0}^{z_0^*}\frac{1}{(z-z_0)\sup|\partial_z\rho_1|}dz}_{\text{by mean value theorem}}\\
&\geq C_\eta\int_{z_0}^{z_0^*}\frac{1}{z-z_0}dz=+\infty.
\end{split}
\end{equation}
This shows the blow-up of the $L^2$-norm of $w_1$. In \eqref{normblowup}, we crucially use the fact that $|\partial_z\rho_1|$ is bounded from Proposition \ref{dzrho1}.

The last step is going back to the original physical unknowns $\Phi$ and show the blow-up of $\|\Phi\|_{H^1}$. Due to the boundedness of $\{\sup_{(x,T_\eta^*)\in\mathcal{R}_i}w_i(x,T_\eta^*)\}_{i=2,\cdots,7}$ in \eqref{Wcc}, \eqref{Wcc4} and \eqref{Wcc7}, it holds
\begin{equation}
\Big\{\sup_{(x,T_\eta^*)\in\mathcal{R}_i}w_i(x,T_\eta^*)\Big\}_{i=2,\cdots,7}\leq C W_0^{(\eta)},
\end{equation}
together with the fact that the first components of $r_2, \ r_4$ and $r_6$ are zero, we deduce
\begin{equation*}
  \begin{split}
    \|\partial_x u_1\|_{L^2(\Omega_{T_\eta^*})}=&\|\sum_{k=1}^7 w_k r_{k}^1\|_{L^2(\Omega_{T_\eta^*})}\geq C\Big(\|w_1\|_{L^2(\Omega_{T_\eta^*})}-\sum_{k=3,5,7}\|w_k \|_{L^2(\Omega_{T_\eta^*})}\Big)\\
    \geq&C\Big(\|w_1\|_{L^2(\Omega_{T_\eta^*})}-3\eta^\frac32W_0^{(\eta)}\Big)\geq +\infty.
  \end{split}
\end{equation*}
Here $r_k^i$ denotes the $i^{\text{th}}$ component of the right eigenvector $r_k$. Hence the blow-up of $\|w_1\|_{L^2(\Omega_{T_\eta^*})}$ implies that $\|u_1\|_{H^1(\Omega_{T_\eta^*})}$ is also infinite. Similarly, we can show $\|\partial_x \varrho\|_{L^2(\Omega_{T_\eta^*})}=+\infty.$
\begin{equation*}
  \begin{split}
    \|\partial_x \varrho\|_{L^2(\Omega_{T_\eta^*})}=&\|\sum_{k=1}^7 w_k r_{k}^4\|_{L^2(\Omega_{T_\eta^*})}\geq C\Big(\|w_1\|_{L^2(\Omega_{T_\eta^*})}-\sum_{k=3,4,5,7}\|w_k \|_{L^2(\Omega_{T_\eta^*})}\Big)\\
    \geq&C\Big(\|w_1\|_{L^2(\Omega_{T_\eta^*})}-4\eta^\frac32W_0^{(\eta)}\Big)\geq +\infty.
  \end{split}
\end{equation*}
This finishes the proof of Theorem \ref{3D}.
\end{proof}

\section{Proof of $\kappa=0$ case} \label{appendix}
In this section, we explain how to deal with the case when $\kappa=0$. We analyze the detailed subtle structures of the system \eqref{decomposed system} under this scenario. And we explain the steps to prove shock formation and ill-posedness when $\kappa=0$.

With $\kappa=0$, $\Phi$ also satisfies \eqref{pMHD}:
\begin{equation*}
\partial_t\Phi+A(\Phi)\partial_x\Phi=0,
\end{equation*}
but with a different coefficient matrix, which is
\begin{equation} \label{0MHD}
A(\Phi)=\begin{pmatrix}
u_1 & 0 & 0& c^2/\varrho & \mu_0H_2/\varrho & \mu_0H_3/\varrho & \partial_Sp/\varrho \\
0 & u_1 & 0& 0 & 0 & 0 & 0 \\
0 & 0  & u_1 & 0 & 0 & 0& 0 \\
\varrho & 0 & 0& u_1 & 0 & 0 & 0 \\
H_2 & 0 & 0& 0 &u_1 & 0 & 0 \\
H_3& 0 & 0& 0 & 0 & u_1 & 0 \\
0 & 0 & 0& 0 & 0 & 0 &  u_1
\end{pmatrix}.
\end{equation}
The eigenvalues of \eqref{0MHD} are
\begin{equation} \label{egv0}
\begin{split}
&\lambda_1=u_1+C_f,\quad \lambda_2=\lambda_3=\lambda_4=\lambda_5=\lambda_6=u_1,\quad \lambda_7=u_1-C_f.
\end{split}
\end{equation}
Here, $C_f$ denotes the fast wave speed. It is defined as
$$C_f=\Big\{\frac{\mu_0}{\varrho}(H_2^2+H_3^2)+c^2\Big\}^{1/2}.$$
And the slow and Alfv\'en wave speeds $C_s,C_a$ vanish. The eigenvalues for $A(\Phi)$ in \eqref{0MHD} satisfy
\begin{equation}\label{negv0}
\begin{split}
\lambda_7<\lambda_6=\lambda_5=\lambda_4=\lambda_3=\lambda_2<\lambda_1.
\end{split}
\end{equation}
There exist seven linearly independent right eigenvectors:
{\begin{equation} \label{rvec0}
\begin{split}
&r_1=\left(\begin{array}{cc}-1\\0\\0\\-\varrho/C_f\\-H_2/C_f\\-H_3/C_f\\0\end{array}\right),
r_2=\left(\begin{array}{cc}0\\1\\0\\0\\0\\0\\0\end{array}\right),
r_3=\left(\begin{array}{cc}0\\0\\1\\0\\0\\0\\0\end{array}\right),
r_4=\left(\begin{array}{cc}0\\0\\0\\-\partial_Sp/c^2\\0\\0\\1\end{array}\right),\\
&r_5=\left(\begin{array}{cc}0\\0\\0\\-\mu_0H_2/c^2\\1\\0\\0\end{array}\right),
r_6=\left(\begin{array}{cc}0\\0\\0\\-\mu_0H_3/c^2\\0\\1\\0\end{array}\right),
r_7=\left(\begin{array}{cc}-1\\0\\0\\\varrho/C_f\\H_2/C_f\\H_3/C_f\\0\end{array}\right).
\end{split}
\end{equation}}
In view of \eqref{egv0}-\eqref{rvec0}, when $\kappa=0$, our system is a non-strictly hyperbolic system with a \underline{quintuple} eigenvalue $\lambda_2=\lambda_3=\lambda_4= \lambda_5=\lambda_6=u_1$.
The dual left eigenvectors are
{\begin{equation}
\begin{split}
&l_1=\Big(-\frac12,0,0,-\frac{c^2}{2\varrho C_f},-\frac{\mu_0H_2}{2\varrho C_f},-\frac{\mu_0H_3}{2\varrho C_f},-\frac{\partial_Sp}{2\varrho C_f}\Big),\\
& l_2=(0,1,0,0,0,0,0),\quad l_3=(0,0,1,0,0,0,0),\quad l_4=(0,0,0,0,0,0,1),\\
& l_5=\Big(0,0,0,-\frac{c^2H_2}{\varrho C_f^2},\frac{\mu_0H_3^2+\varrho c^2}{\varrho C_f^2},-\frac{\mu_0H_2H_3}{\varrho C_f^2},-\frac{H_2\partial_Sp}{\varrho C_f^2}\Big),\\
&l_6=\Big(0,0,0,-\frac{c^2H_3}{\varrho C_f^2},-\frac{\mu_0H_2H_3}{\varrho C_f^2},\frac{\mu_0H_2^2+\varrho c^2}{\varrho C_f^2},-\frac{H_3\partial_Sp}{\varrho C_f^2}\Big),\\
&l_7=\Big(-\frac12,0,0,\frac{c^2}{2\varrho C_f},\frac{\mu_0H_2}{2\varrho C_f},\frac{\mu_0H_3}{2\varrho C_f},\frac{\partial_Sp}{2\varrho C_f}\Big).
\end{split}
\end{equation}}

Since
\begin{equation}
\begin{split}
&\nabla_\Phi\lambda_1=\Big(1,0,0,\frac{(\gamma-1)\varrho c^2-\mu_0(H_2^2+H_3^2)}{2\varrho^2 C_f},\frac{\mu_0H_2}{C_f\varrho},\frac{\mu_0H_3}{C_f\varrho},\frac{c^2}{2C_f}\Big),\\
&\nabla_\Phi\lambda_{2,3,4,5,6}=(1,0,0,0,0,0,0),\\
&\nabla_\Phi\lambda_7=\Big(1,0,0,-\frac{(\gamma-1)\varrho c^2-\mu_0(H_2^2+H_3^2)}{2\varrho^2 C_f},-\frac{\mu_0H_2}{C_f\varrho},-\frac{\mu_0H_3}{C_f\varrho},-\frac{c^2}{2C_f}\Big),\\
\end{split}
\end{equation}
by \eqref{coec} we have $c_{im}^i=0$ for $i,m=2,3,4,5,6.$
Via a direct calculation, it holds that $c_{11}^1(\Phi)=-\big(\frac12+\frac{\gamma c^2}{2C_f^2}+\frac{\mu_0(H_2^2+H_3^2)}{\varrho C_f^2}\big)<0$. Thus, the first characteristic is genuinely nonlinear. For other coefficients, noting that $l_i,r_i,\nabla_\Phi r_i$ are all regular, one can easily verify that all the coefficients $\gamma^i_{im}$, $\gamma^i_{km}$ are of $O(1)$. The structure of the decomposed equations for $H_1=0$ then coincides with \eqref{wsketch1}-\eqref{infmv}. Since $\lambda_2=\lambda_3=\lambda_4= \lambda_5=\lambda_6$, the corresponding characteristic strips $\mathcal{R}_2,\mathcal{R}_3,\mathcal{R}_4,\mathcal{R}_5,\mathcal{R}_6$ are also identical. We consider characteristics that propagate in and out of the following three characteristic strips: $\{\mathcal{R}_1,\mathcal{R}_{2},\mathcal{R}_7\}$. These three strips will be completely separated when $t>t_0^{(\eta)}$. See the following picture.
\begin{figure}[H]
\centering
\begin{tikzpicture}[fill opacity=0.5, draw opacity=1, text opacity=1]
\node [below]at(3.5,0){$2\eta$};
\node [below]at(2.3,0){$\eta$};

\filldraw[white, fill=gray!40](3.5,0)..controls (3,1) and (1.8,2.6)..(1,3.5)--(3.2,3.5)..controls (2.1,1.5) and (2.6,1)..(2.5,0);
\filldraw[white, fill=gray!40](3.5,0)..controls (3.4,1) and (3.2,1.5)..(4.8,3.5)--(6.5,3.5)..controls (3.8,1.5) and (3,1)..(2.5,0);

\filldraw[white,fill=gray!80](2.5,0)..controls (3,1) and (3.8,1.5)..(6.5,3.5)--(8.2,3.5)..controls (5,1.5) and (4.3,1)..(3.5,0);
\filldraw[white,fill=gray!80](2.5,0)..controls (2.6,1) and (2.1,1.5)..(3.2,3.5)--(4.8,3.5)..controls (3.2,1.5) and (3.4,1)..(3.5,0);
\filldraw[white,fill=gray!80](2.5,0)..controls (1.2,2.2) and (0.5,2.8)..(-0.5,3.5)--(1,3.5)..controls (1.8,2.6) and (3,1)..(3.5,0);

\draw[->](-0.5,0)--(8.2,0)node[left,below]{$t=0$};
\draw[dashed](-0.5,1.7)--(8.2,1.7)node[right,below]{$t=t_0^{(\eta)}$};

\draw [color=black](3.5,0)..controls (4.3,1) and (5,1.5)..(8.2,3.5);

\draw [color=black](2.5,0)..controls (3,1) and (3.8,1.5)..(6.5,3.5);
\node [below] at(6.2,3){$\mathcal{R}_1$};

\draw [color=black](3.5,0)..controls (3.4,1) and (3.2,1.5)..(4.8,3.5);

\draw [color=black](2.5,0)..controls (2.6,1) and (2.1,1.5)..(3.2,3.5);
\node [below] at(3.5,3){$\mathcal{R}_2$};

\draw [color=black](3.5,0)..controls (3,1) and (1.8,2.6)..(1,3.5);
\node [below] at(1.1,3){$\mathcal{R}_7$};

\draw [color=black] (2.5,0)..controls (1.2,2.2) and (0.5,2.8)..(-0.5,3.5);
\end{tikzpicture}
\caption{\small\bf Separation of three characteristic strips.}
\end{figure}
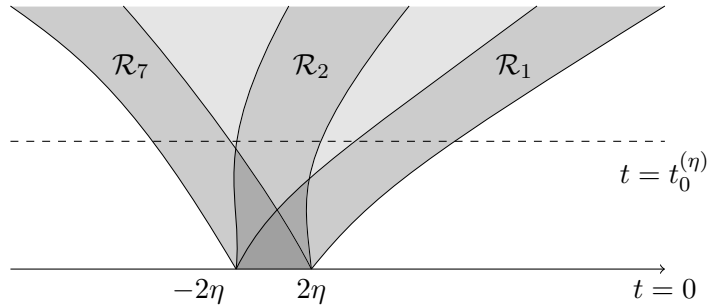
The initial data are given in \eqref{data}-\eqref{data2} and satisfy the following conditions
\begin{equation}\label{aWr}
\begin{split}
&W_0^{(\eta)}=\sup_{z_1}(w_1^{(\eta)}(z_1,0))=O(\theta),\quad\max_{i=2,3,4,5,6,7}\sup_{z_i}|w_i^{(\eta)}(z_i,0)|\leq C\eta( W_0^{(\eta)})^2.
\end{split}
\end{equation}
The main quantities to be estimated are the same as those in \eqref{norm1}-\eqref{new norm} except slight changes in the following two quantities:
\begin{align*}
  &V_{\bar{2}}(t):=\max\sup_{(x',t')\notin\mathcal{R}_{\bar{2}},\atop 0\leq t'\leq t}\{|w_2(x',t')|,|w_3(x',t')|,|w_4(x',t')|,|w_5(x',t')|,|w_6(x',t')|\},\\ &\check{W}(t):=\sup_{(x',t')\in\mathcal{R}_{\bar{2}},\atop 0\leq t'\leq t}\{w_2,w_3,w_4,w_5,w_6\}.
\end{align*}
In the same fashion as in Section \ref{pfthm1.2}, for $t\in[0,T_\eta^*)$ with $T_\eta^*\sim\frac{1}{W_0^{(\eta)}}$, we obtain the following estimates:
 \begin{align*}
  S(t)=&O(1),\quad J(t)=O(W_0^{(\eta)}),\quad V(t)=O\big(\eta  (W_0^{(\eta)})^2\big),\quad \check{W}(t)=O\big(\eta  (W_0^{(\eta)})^2\big).
\end{align*}
Finally, following the same argument as in Section \ref{pfill}, one can deduce that the $H^2$-norm of the solution to \eqref{0MHD} blows up. In conclusion, Theorem \ref{shock}-\ref{3D} also hold when $\kappa=0$.

\section{Proof of $H^2$ ill-posedness for 3D compressible Euler equations (Theorem \ref{sharpeuler})} \label{euler ill}
In this section, we study the case with the absence of magnetic field, i.e., $H=0$. The MHD system \eqref{MHD} then reduces to the 3D compressible Euler equations \eqref{euler}
\begin{equation*}
\left\{\begin{split}
&\partial_t\varrho+\nabla\cdot(\varrho u)=0,\\
&\varrho\{\partial_t+(u\cdot\nabla)\}u+\nabla p=0,\\
&\partial_t S+(u\cdot\nabla)S=0.
\end{split}
\right.
\end{equation*}
To establish ill-posedness via Lindblad-type approach, we study the above system under planar symmetry
\begin{equation}\label{peuler}
\left\{\begin{split}
&\varrho\partial_tu_1+\varrho u_1\partial_x u_1+c^2\partial_x\varrho+\partial_S p\partial_xS=0,\\
&\varrho\partial_tu_2+\varrho u_1\partial_x u_2=0,\\
&\varrho\partial_tu_3+\varrho u_1\partial_x u_3=0,\\
&\partial_t\varrho+u_1\partial_x\varrho+\varrho\partial_x u_1=0,\\
&\partial_t S+u_1\partial_xS=0.
\end{split}
\right.
\end{equation}
Recall the definition of vorticity $\omega:=\nabla \times u$. Under plane symmetry, the vorticity reads $\omega=(0,-\partial_x u_3,\partial_x u_2)$. The system \eqref{peuler} therefore allows non-trivial vorticity and entropy.

In a similar fashion, we can also construct examples of shock formation and $H^2$ ill-posedness for \eqref{peuler}. In particular, according to the low-regularity local well-posedness result in \cite{Wang19} by Wang (See also \cite{Disconzi} by Disconzi-Luo-Mazzone-Speck and \cite{zhang-andersson} by Zhang-Andersson), we will see that the $H^2$ ill-posedness is sharp.

To see the connection, we first review studies on the quasilinear wave equation. The sharp local well-posedness results were obtained by Tataru-Smith \cite{tataru} and Wang \cite{Wang}. And the sharp ill-posedness, as mentioned before, was constructed by Lindblad \cite{Lind98}. For Einstein vacuum equations, the sharp local well-posedness was established by Klainerman-Rodnianski-Szeftel \cite{klainerman-igor-szeftel}.

For the 3D compressible Euler equations, the classical local well-posedness result holds if the initial data for $(u,\varrho)$ are in $H^s$ with $s>\frac52$. In \cite{Wang19}, Wang proved the low-regularity local well-posedness requiring the initial data of velocity $u$, density $\varrho$ and vorticity $\omega$ satisfying $(u,\varrho,\omega)\in H^s\times H^s \times H^{s'}$ with $2<s'<s$. Interested readers are also referred to Disconzi-Luo-Mazzone-Speck \cite{Disconzi} and Zhang-Andersson \cite{zhang-andersson}. In \cite{Wang19}, it is also conjectured that the sharp local well-posedness results would be for initial data satisfying $(u,\varrho,\omega)\in H^s\times H^s\times H^{s-\frac12}$ with $s>2$. For the 3D incompressible case, the corresponding sharp low-regularity ill-posedness was proved by Bourgain-Li \cite{bourgain-li} for initial datum satisfying $(u,\omega) \in H^\frac52 \times H^{\frac32}$ and the ill-posedness is driven by the evolution of $\omega$. We also refer to Bourgain-Li \cite{bourgain-li3}, Elgindi-Jeong \cite{elgindi} for 2D incompressible Euler equations in $H^2$, and Bourgain-Li \cite{bourgain-li2} for 2D and 3D incompressible Euler equations in integer $C^m$ spaces. Bourgain-Li \cite{bourgain-li} provided a sharp result with respect to the regularity of vorticity. Our Theorem \ref{sharpeuler} provides a sharp companion with respect to the regularity of the fluid velocity $u$ and density $\varrho$. In particular, our initial data leading to the ill-posedness satisfy $(u,\varrho) \in H^2\times H^2$ and $\omega \in C^\infty \subseteq H^{\frac32}$. See Figure \ref{euler results}.
\begin{figure}[H]
\centering
\begin{tikzpicture}[fill opacity=0.5, draw opacity=1, text opacity=1,scale=1.2]
\filldraw[white, fill=gray!40](1,0.5)--(1,4)--(5,4)--(1.5,0.5)--(1,0.5);
\filldraw[gray!40, fill=gray!80](1,0.5)--(1,1)--(1.5,1)--(1.5,0.5)--(1,0.5);
\draw[->](0,-0.3)--(5.5,-0.3);
\filldraw(5,-0.3)node[below]{\footnotesize $s$: regularity for the velocity};
\draw[->](0,-0.3)--(0,4)node[right,above]{\footnotesize $s'$: regularity for the vorticity};
%\draw[dashed](1,0.5)--(7,0.5);
%\draw[dashed](7,0.5)--(7,4);
\draw(1.5,0.5)--(5,4);
\draw[dashed](1.5,0.5)--(1.5,4);
\draw[dashed](1,1)--(1.5,1);
\draw[dashed](1,4)--(5,4);
\filldraw [thick,black] (1.5,0.5) circle [radius=0.8pt];
\filldraw [thick,black] (1,0.5) circle [radius=0.8pt];
\filldraw [thick,black] (1,-0.3) circle [radius=0.8pt]node[below]{2};
\filldraw [thick,black](1.5,-0.3) circle [radius=0.8pt]node[below]{$\frac{5}{2}$};

\filldraw [thick,black](0,0.5) circle [radius=0.8pt]node[left]{$\frac32$};
\filldraw [thick,black](0,1) circle [radius=0.8pt]node[left]{2};

\draw[thick,black](1,0.5)--(1.5,0.5);
%\draw[dashed](1,1)--(1.5,1);
\draw[very thick,black](1,0.5)--(1,4);
\filldraw(1.65,2.2)circle node[right]{\scriptsize LWP proved by Wang(19') (See also};
\filldraw(1.65,1.9)circle node[right]{\scriptsize Disconzi-Luo-Mazzone-Speck(19'),};
\draw[->](1.75,1.9)--(1.25,1.9);
\filldraw(1.65,1.6)circle node[right]{\scriptsize Zhang-Andersson(21'))};
\filldraw(1.65,3)circle node[right]{\scriptsize Classical LWP};
\filldraw(1.65,0.75)circle node[right]{\scriptsize LWP by Conjecture in Wang(19')};
\filldraw (2,2) circle;% node[left]{D};
\draw[->](1.75,0.75)--(1.25,0.75);
\draw[->](1.75,0.3)--(1.25,0.45);
\filldraw(1.65,0.3)node[right]{\scriptsize Bourgain-Li's ill-posedness(21')};
\filldraw(1.65,0)node[right]{\scriptsize (incompressible 3D Euler)};
\draw[->](-0.1,2.25)--(0.95,2.25);
\filldraw(0,2.5)node[left]{\footnotesize An-Chen-Yin's };
\filldraw(0,2.2)node[left]{\footnotesize ill-posedness};
\end{tikzpicture}
\caption{\small\bf Low regularity local well-posedness (LWP) and ill-posedness for 3D Euler equations.}
\label{euler results}
\end{figure}
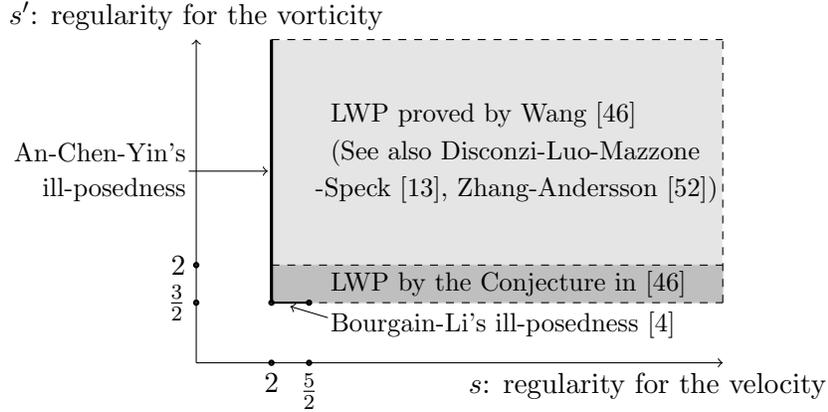

With $H=0$, denoting $\tilde{\Phi}:=(u_1,u_2,u_3,\varrho-1,S)^T(x,t)$, the 3D compressible Euler system \eqref{peuler} reads
\begin{equation} \label{ppeuler}
\partial_t\tilde{\Phi}+A(\tilde{\Phi})\partial_x\tilde{\Phi}=0
\end{equation}
with
\begin{equation} \label{matreuler}
A(\tilde{\Phi})=\begin{pmatrix}
u_1 & 0 & 0& c^2/\varrho & \partial_Sp/\varrho \\
0 & u_1 & 0& 0 & 0 \\
0 & 0  & u_1 & 0 &  0 \\
\varrho & 0 & 0& u_1 &  0 \\
0 & 0 & 0& 0 &  u_1
\end{pmatrix}.
\end{equation}
The eigenvalues of \eqref{matreuler} are
\begin{equation} \label{egveuler}
\lambda_1=u_1+c,\quad\lambda_2=\lambda_3=\lambda_4=u_1,\quad \lambda_5=u_1-c
\end{equation}
with $c=\sqrt{\partial_\varrho p}$ being the sound speed. And we have a \underline{triple} eigenvalue $\lambda_2=\lambda_3=\lambda_4=u_1$. The five linearly independent right eigenvectors are
\begin{equation} \label{rveceuler}
\begin{split}
&r_1=\left(\begin{array}{cc}-1\\0\\0\\-\varrho/c\\0\end{array}\right),
r_2=\left(\begin{array}{cc}0\\1\\0\\0\\0\end{array}\right),
r_3=\left(\begin{array}{cc}0\\0\\1\\0\\0\end{array}\right),
r_4=\left(\begin{array}{cc}0\\0\\0\\-\partial_Sp/c^2\\ 1\end{array}\right),
r_5=\left(\begin{array}{cc}-1\\0\\0\\\varrho/c\\0\end{array}\right).
\end{split}
\end{equation}
The dual left eigenvectors are
\begin{equation}
\begin{split}
&l_1=(-\frac12,0,0,-\frac{c}{2\varrho },-\frac{\partial_Sp}{2\varrho c}),\quad l_2=(0,1,0,0,0),\quad l_3=(0,0,1,0,0),\\
&l_4=(0,0,0,0,1),\quad l_5=(-\frac12,0,0,\frac{c}{2\varrho},\frac{\partial_Sp}{2\varrho c}).
\end{split}
\end{equation}
With these setup, one can then prove the shock formation and the $H^2$ ill-posedness as in Section \ref{appendix}. In particular, with the initial datum of $w_1$ taken as in \eqref{data}, the $H^1$-norms of the velocity $u_1$ and density $\varrho$ will blow up at the shock formation time.

Moreover, the vorticity satisfies
\begin{equation}
    \partial_x u_2=\sum_{k=1}^5 w_k r_{k}^2=w_2,\quad \partial_x u_3=\sum_{k=1}^5 w_k r_{k}^3=w_3.
\end{equation}
Therefore, with the following initial data for $w_i$ (similar to \eqref{data2}):
\begin{equation}
w_i^{(\eta)}(z,0)=\theta^2 \int_\mathbb{R}\zeta_{\frac\eta{10}}(z-y) \chi(y)dy, \quad \text{with}\quad i=2,3,4,5.
\end{equation}
By the smoothness of the test function $\zeta_{\frac\eta{10}}$, we have that the initial datum of $\omega$ is smooth and compactly supported, Hence, it belongs to $H^{s}(\mathbb{R}^3)$ for $s\geq \frac{3}{2}$, which corresponds to the illustration in Figure \ref{euler results}.

\appendix
\section{Proof of Proposition \ref{coeff}} \label{appendixB}
In Proposition \ref{coeff}, we analyze and bound the behaviours of $c^i_{im}$, $\gamma^4_{4m}$ and $\gamma^4_{km}$. In this appendix, we complete the proof of Proposition \ref{coeff} by proving that the coefficients $\gamma^i_{im}$, $\gamma^i_{km}$ with $i \neq 4$ satisfy \eqref{order1} when $H_1=\kappa \neq 0$. This is achieved via a thorough calculation of all the coefficients. As one will see, the formula of certain coefficients can be very complicated. \underline{In spite of these complexities, we show that all these coefficients are of $O(1)$.} This is owing to the careful design of right eigenvectors in \eqref{regv} which \underline{eliminates all potentially} \underline{singular terms}, e.g. ${(C_a^2-C_s^2)}^{-1},{H_2}^{-1},{H_3}^{-1}$.

We begin with calculating the coefficients $\gamma^1_{km}$ as below.
\begin{small}
\begin{align*}
\gamma^1_{12}=&(\lambda_1-\lambda_2)\bigg\{\frac{-C_f}{2(C_f^2-C_s^2)}\bigg[H_3\partial_{H_2}\bigg(\frac{C_a^2}{C_f}-C_f\bigg)-H_2\partial_{H_3}\bigg(\frac{C_a^2}{C_f}-C_f\bigg)\bigg]\\
&-\frac{C_a^2-3C_s^2}{2C_f(C_f^2-C_s^2)}(H_3\partial_{H_2}C_f-H_2\partial_{H_3}C_f)\bigg\},\\
\gamma^1_{13}=&-(\lambda_1-\lambda_3)\bigg\{\frac{C_f}{2(C_f^2-C_s^2)}\bigg[H_2\partial_{H_2}\bigg(\frac{C_a^2}{C_f}-C_f\bigg)+H_3\partial_{H_3}\bigg(\frac{C_a^2}{C_f}-C_f\bigg)-\rho\bigg(\frac{C_a^2}{C_s^2}-1\bigg)\partial_{\rho}\bigg(\frac{C_a^2}{C_f}-C_f\bigg)\\
&-H_2\partial_{H_2}\bigg(\frac{C_a^2}{C_s}-C_s\bigg)-H_3\partial_{H_3}\bigg(\frac{C_a^2}{C_s}-C_s\bigg)+\rho\bigg(\frac{C_a^2}{C_f^2}-1\bigg)\partial_{\rho}\bigg(\frac{C_a^2}{C_s}-C_s\bigg)\bigg]\\
&-\frac{C_f^2C_s^2}{2C_a^2(C_f^2-C_s^2)}\bigg[\bigg(\frac{C_a^2}{C_s^2}-1\bigg)\bigg(\frac{C_a^2}{C_f^2}-1+\rho\partial_\rho \frac{C_a^2}{C_f^2}\bigg)+\frac{2C_a^2H_2}{C_f^3}\partial_{H_2}C_f+\frac{2C_a^2H_3}{C_f^3}\partial_{H_3}C_f\\
&-\bigg(\frac{C_a^2}{C_f^2}-1\bigg)\bigg(\frac{C_a^2}{C_s^2}-1+\rho\partial_\rho \frac{C_a^2}{C_s^2}\bigg)-\frac{2C_a^2H_2}{C_s^3}\partial_{H_2}C_s-\frac{2C_a^2H_3}{C_s^3}\partial_{H_3}C_s\bigg]\\
&+\frac{C_f(C_a^2-C_s^2)}{2(C_f^2-C_s^2)}\bigg[\frac{H_2}{C_f^2}\partial_{H_2}C_f+\frac{H_3}{C_f^2}\partial_{H_3}C_f-\bigg(\frac{1}{C_f}+\frac{\rho\partial_\rho C_f}{C_f^2}\bigg)\bigg(\frac{C_a^2}{C_s^2}-1\bigg)-\frac{1}{C_f}\\
&-\frac{H_2}{C_s^2}\partial_{H_2}C_s-\frac{H_3}{C_s^2}\partial_{H_3}C_s+\bigg(\frac{1}{C_s}+\frac{\rho\partial_\rho C_s}{C_s^2}\bigg)\bigg(\frac{C_a^2}{C_f^2}-1\bigg)+\frac{1}{C_s}\bigg]\bigg\},\\
\gamma^1_{14}=&-(\lambda_1-\lambda_4)\bigg\{\frac{C_f}{2(C_f^2-C_s^2)}\bigg[\frac{\rho}{\gamma}\partial_\rho \bigg(\frac{C_a^2}{C_f}-C_f\bigg)-\partial_S \bigg(\frac{C_a^2}{C_f}-C_f\bigg)\bigg]+\frac{C_f^2 C_s^2}{2C_a^2(C_f^2-C_s^2)}\bigg(\frac{\rho}{\gamma}\partial_\rho \frac{C_a^2}{C_f^2}+\frac{2C_a^2}{C_f^3}\partial_S C_f\bigg)\\
&+\frac{C_a^2-C_s^2}{2(C_f^2-C_s^2)}\bigg(\frac{\rho}{\gamma C_f}\partial_\rho C_f-\frac{\partial_S C_f}{C_f}+\frac{1}{\gamma}\bigg)\bigg\},
\end{align*}

\newpage
\begin{align*}
\gamma^1_{15}=&-(\lambda_1-\lambda_5)\bigg\{\frac{C_f}{2(C_f^2-C_s^2)}\bigg[\rho\bigg(\frac{C_a^2}{C_s^2}-1\bigg)\partial_{\rho}\bigg(\frac{C_a^2}{C_f}-C_f\bigg)-H_2\partial_{H_2}\bigg(\frac{C_a^2}{C_f}-C_f\bigg)-H_3\partial_{H_3}\bigg(\frac{C_a^2}{C_f}-C_f\bigg)\\
&-H_2\partial_{H_2}\bigg(\frac{C_a^2}{C_s}-C_s\bigg)-H_3\partial_{H_3}\bigg(\frac{C_a^2}{C_s}-C_s\bigg)+\rho\bigg(\frac{C_a^2}{C_f^2}-1\bigg)\partial_{\rho}\bigg(\frac{C_a^2}{C_s}-C_s\bigg)\bigg]\\
&+\frac{C_f^2C_s^2}{2C_a^2(C_f^2-C_s^2)}\bigg[\bigg(\frac{C_a^2}{C_s^2}-1\bigg)\bigg(\frac{C_a^2}{C_f^2}-1+\rho\partial_\rho \frac{C_a^2}{C_f^2}\bigg)+\frac{2C_a^2H_2}{C_f^3}\partial_{H_2}C_f+\frac{2C_a^2H_3}{C_f^3}\partial_{H_3}C_f\\
&-\bigg(\frac{C_a^2}{C_f^2}-1\bigg)\bigg(\frac{C_a^2}{C_s^2}-1+\rho\partial_\rho \frac{C_a^2}{C_s^2}\bigg)-\frac{2C_a^2H_2}{C_s^3}\partial_{H_2}C_s-\frac{2C_a^2H_3}{C_s^3}\partial_{H_3}C_s\bigg]\\
&+\frac{C_f(C_a^2-C_s^2)}{2(C_f^2-C_s^2)}\bigg[C_f\bigg(\frac{1}{C_f}+\frac{\rho\partial_\rho C_f}{C_f^2}\bigg)\bigg(\frac{C_a^2}{C_s^2}-1\bigg)+\frac{1}{C_f}-\frac{H_2}{C_f^2}\partial_{H_2}C_f-\frac{H_3}{C_f^2}\partial_{H_3}\\
&-\frac{H_2}{C_s^2}\partial_{H_2}C_s-\frac{H_3}{C_s^2}\partial_{H_3}C_s+\bigg(\frac{1}{C_s}+\frac{\rho\partial_\rho C_s}{C_s^2}\bigg)\bigg(\frac{C_a^2}{C_f^2}-1\bigg)+\frac{1}{C_s}\bigg]\bigg\},\\
\gamma^1_{16}=&(\lambda_1-\lambda_6)\bigg\{\frac{C_f}{2(C_f^2-C_s^2)}\bigg[H_3\partial_{H_2}\bigg(\frac{C_a^2}{C_f}-C_f\bigg)-H_2\partial_{H_3}\bigg(\frac{C_a^2}{C_f}-C_f\bigg)\bigg]\\
&+\frac{C_a^2-3C_s^2}{2C_f(C_f^2-C_s^2)}(H_3\partial_{H_2}C_f-H_2\partial_{H_3}C_f)\bigg\},\\
\gamma^1_{17}=&-\frac{(\lambda_1-\lambda_7)(C_a^2-C_s^2)}{C_f^2-C_s^2}\bigg[\frac{\rho(C_a^2-C_f^2)}{C_f^3}\partial_\rho C_f+\frac{C_a^2}{C_f^2}-\frac{H_2}{C_f}\partial_{H_2}C_f-\frac{H_3}{C_f}\partial_{H_3}C_f\bigg],\\
\gamma^1_{23}=&0,\ \gamma^1_{24}=0,\ \gamma^1_{25}=0,\ \gamma^1_{26}=\frac{(\lambda_2-\lambda_6)C_f(C_f+C_a)(C_a^2-C_s^2)}{2C_a^2(C_f^2-C_s^2)},\ \gamma^1_{27}=0,\\
\gamma^1_{32}=&-(\lambda_3-\lambda_2)\bigg\{\frac{C_f}{2(C_f^2-C_s^2)}\bigg[H_3\partial_{H_2}\bigg(\frac{C_a^2}{C_s}-C_s\bigg)-H_2\partial_{H_3}\bigg(\frac{C_a^2}{C_s}-C_s\bigg)\bigg]\\
&-\frac{C_f^2}{C_s(C_f^2-C_s^2)}(H_3\partial_{H_2}C_s-H_2\partial_{H_3}C_s)+\frac{C_f(C_a^2-C_s^2)}{2C_s^2(C_f^2-C_s^2)} (H_3\partial_{H_2}C_s-H_2\partial_{H_3}C_s)\bigg\},\\
\gamma^1_{34}=&-(\lambda_3-\lambda_4)\bigg\{\frac{C_f}{2(C_f^2-C_s^2)}\bigg[\frac{\rho}{\gamma}\partial_\rho\bigg(\frac{C_a^2}{C_s}-C_s\bigg)-\partial_S\bigg(\frac{C_a^2}{C_s}-C_s\bigg)\bigg]+\frac{C_f(C_a^2-C_s^2)}{2C_s^2(C_f^2-C_s^2)}\bigg(\rho\partial_\rho C_s-\partial_S C_s+\frac{C_s}{\gamma}\bigg)\\
&+\frac{C_f^2C_s^2}{2C_a^2(C_f^2-C_s^2)}\bigg[\frac{2C_a^2}{C_s^3}\partial_S C_s+\frac{1}{\gamma}\bigg(\frac{C_a^2}{C_s^2}-1+\rho\partial_\rho\frac{C_a^2}{C_s^2}\bigg)\bigg]\bigg\},
\end{align*}
\newpage
\begin{align*}
\gamma^1_{35}=&-(\lambda_3-\lambda_5)\bigg\{\frac{C_f}{2(C_f^2-C_s^2)}\bigg[\rho\bigg(\frac{C_a^2}{C_s^2}-1\bigg)\partial_{\rho}\bigg(\frac{C_a^2}{C_s}-C_s\bigg)-H_2\partial_{H_2}\bigg(\frac{C_a^2}{C_s}-C_s\bigg)-H_3\partial_{H_3}\bigg(\frac{C_a^2}{C_s}-C_s\bigg)\bigg]\\
&+\frac{C_f^2C_s^2}{2\rho C_a^2(C_f^2-C_s^2)}\bigg[\frac{1}{2}\partial_\rho \bigg(\rho\big(\frac{C_a^2}{C_s^2}-1\big)\bigg)^2+\frac{2\rho C_a^2H_2}{C_s^3}\partial_{H_2}C_s+\frac{2\rho C_a^2H_3}{C_s^3}\partial_{H_3}C_s\bigg]\\
&+\frac{C_f(C_a^2-C_s^2)}{2(C_f^2-C_s^2)}\bigg[\frac{\rho\partial_\rho C_s}{C_s^2}\bigg(\frac{C_a^2}{C_s^2}-1\bigg)+\frac{C_a^2}{C_s^3}-\frac{H_2}{C_s^2}\partial_{H_2}C_s-\frac{H_3}{C_s^2}\partial_{H_3}C_s\bigg]+\frac{C_f^2(C_a^2-C_s^2)}{2C_a^2(C_f^2-C_s^2)}\bigg\},\\
\gamma^1_{36}=&(\lambda_3-\lambda_6)\bigg\{\frac{C_f}{2(C_f^2-C_s^2)}\bigg[H_3\partial_{H_2}\bigg(\frac{C_a^2}{C_s}-C_s\bigg)-H_2\partial_{H_3}\bigg(\frac{C_a^2}{C_s}-C_s\bigg)\bigg]\\
&-\frac{C_f^2}{C_s(C_f^2-C_s^2)}(H_3\partial_{H_2}C_s-H_2\partial_{H_3}C_s)+\frac{C_f(C_a^2-C_s^2)}{2C_s^2(C_f^2-C_s^2)} (H_3\partial_{H_2}C_s-H_2\partial_{H_3}C_s)\bigg\},\\
\gamma^1_{37}=&-(\lambda_3-\lambda_7)\bigg\{\frac{C_f}{2(C_f^2-C_s^2)}\bigg[\rho\bigg(\frac{C_a^2}{C_f^2}-1\bigg)\partial_{\rho}\bigg(\frac{C_a^2}{C_s}-C_s\bigg)-H_2\partial_{H_2}\bigg(\frac{C_a^2}{C_s}-C_s\bigg)-H_3\partial_{H_3}\bigg(\frac{C_a^2}{C_s}-C_s\bigg)\bigg]\\
&+\frac{C_f^2C_s^2}{2C_a^2(C_f^2-C_s^2)}\bigg[\bigg(\frac{C_a^2}{C_f^2}-1\bigg)\bigg(\frac{C_a^2}{C_s^2}-1+\rho\partial_\rho \frac{C_a^2}{C_s^2}\bigg)+\frac{2C_a^2H_2}{C_s^3}\partial_{H_2}C_s+\frac{2C_a^2H_3}{C_s^3}\partial_{H_3}C_s\bigg]\\
&+\frac{C_f(C_a^2-C_s^2)}{2(C_f^2-C_s^2)}\bigg[\bigg(\frac{1}{C_s}+\frac{\rho\partial_\rho C_s}{C_s^2}\bigg)\bigg(\frac{C_a^2}{C_f^2}-1\bigg)+\frac{1}{C_s}-\frac{H_2}{C_s^2}\partial_{H_2}C_s-\frac{H_3}{C_s^2}\partial_{H_3}C_s\bigg]-\frac{C_f^2(C_a^2-C_s^2)}{2C_a^2(C_f^2-C_s^2)}\bigg\},\\
\gamma^1_{42}=&0,\ \gamma^1_{43}=-\frac{(\lambda_4-\lambda_3)C_f^2C_s^2}{2\gamma C_a^2(C_f^2-C_s^2)}\bigg(\frac{C_a^2}{C_s^2}-1\bigg),\ \gamma^1_{45}=-\frac{(\lambda_4-\lambda_3)C_f^2C_s^2}{2\gamma C_a^2(C_f^2-C_s^2)}\bigg(\frac{C_a^2}{C_s^2}-1\bigg),\ \gamma^1_{46}=0,\\
\gamma^1_{47}=&\frac{(\lambda_4-\lambda_7)C_f^2C_s^2}{2\gamma C_a^2(C_f^2-C_s^2)}\bigg(\frac{C_a^2}{C_f^2}-1\bigg),\\
\gamma^1_{52}=&-(\lambda_5-\lambda_2)\bigg\{\frac{C_f}{2(C_f^2-C_s^2)}\bigg[H_3\partial_{H_2}\bigg(\frac{C_a^2}{C_s}-C_s\bigg)-H_2\partial_{H_3}\bigg(\frac{C_a^2}{C_s}-C_s\bigg)\bigg]\\
&+\frac{C_f^2}{C_s(C_f^2-C_s^2)}(H_3\partial_{H_2}C_s-H_2\partial_{H_3}C_s)+\frac{C_f(C_a^2-C_s^2)}{2C_s^2(C_f^2-C_s^2)} (H_3\partial_{H_2}C_s-H_2\partial_{H_3}C_s)\bigg\},\\
\gamma^1_{53}=&-(\lambda_5-\lambda_3)\bigg\{\frac{C_f}{2(C_f^2-C_s^2)}\bigg[\rho\bigg(\frac{C_a^2}{C_s^2}-1\bigg)\partial_{\rho}\bigg(\frac{C_a^2}{C_s}-C_s\bigg)-H_2\partial_{H_2}\bigg(\frac{C_a^2}{C_s}-C_s\bigg)-H_3\partial_{H_3}\bigg(\frac{C_a^2}{C_s}-C_s\bigg)\bigg]\\
&+\frac{C_f^2C_s^2}{2\rho C_a^2(C_f^2-C_s^2)}\bigg[\frac{1}{2}\partial_\rho \bigg(\rho\big(\frac{C_a^2}{C_s^2}-1\big)\bigg)^2+\frac{2\rho C_a^2H_2}{C_s^3}\partial_{H_2}C_s+\frac{2\rho C_a^2H_3}{C_s^3}\partial_{H_3}C_s\bigg]\\
&-\frac{C_f(C_a^2-C_s^2)}{2(C_f^2-C_s^2)}\bigg[\frac{\rho\partial_\rho C_s}{C_s^2}\bigg(\frac{C_a^2}{C_s^2}-1\bigg)+\frac{C_a^2}{C_s^3}-\frac{H_2}{C_s^2}\partial_{H_2}C_s-\frac{H_3}{C_s^2}\partial_{H_3}C_s\bigg]+\frac{C_f^2(C_a^2-C_s^2)}{2C_a^2(C_f^2-C_s^2)}\bigg\},
\end{align*}
\newpage
\begin{align*}
\gamma^1_{54}=&-(\lambda_5-\lambda_4)\bigg\{\frac{C_f}{2(C_f^2-C_s^2)}\bigg[\frac{\rho}{\gamma}\partial_\rho\bigg(\frac{C_a^2}{C_s}-C_s\bigg)-\partial_S\bigg(\frac{C_a^2}{C_s}-C_s\bigg)\bigg]+\frac{C_f(C_a^2-C_s^2)}{2C_s^2(C_f^2-C_s^2)}\bigg(\rho\partial_\rho C_s-\partial_S C_s+\frac{C_s}{\gamma}\bigg)\\
&-\frac{C_f^2C_s^2}{2C_a^2(C_f^2-C_s^2)}\bigg[\frac{2C_a^2}{C_s^3}\partial_S C_s+\frac{1}{\gamma}\bigg(\frac{C_a^2}{C_s^2}-1+\rho\partial_\rho\frac{C_a^2}{C_s^2}\bigg)\bigg]\bigg\},\\
\gamma^1_{56}=&(\lambda_5-\lambda_6)\bigg\{\frac{C_f}{2(C_f^2-C_s^2)}\bigg[H_3\partial_{H_2}\bigg(\frac{C_a^2}{C_s}-C_s\bigg)-H_2\partial_{H_3}\bigg(\frac{C_a^2}{C_s}-C_s\bigg)\bigg]+\frac{C_f^2}{C_s(C_f^2-C_s^2)}(H_3\partial_{H_2}C_s-H_2\partial_{H_3}C_s)\\
&+\frac{C_f(C_a^2-C_s^2)}{2C_s^2(C_f^2-C_s^2)} (H_3\partial_{H_2}C_s-H_2\partial_{H_3}C_s)\bigg\},\\
\gamma^1_{57}=&-(\lambda_5-\lambda_7)\bigg\{\frac{C_f}{2(C_f^2-C_s^2)}\bigg[\rho\bigg(\frac{C_a^2}{C_f^2}-1\bigg)\partial_{\rho}\bigg(\frac{C_a^2}{C_s}-C_s\bigg)-H_2\partial_{H_2}\bigg(\frac{C_a^2}{C_s}-C_s\bigg)-H_3\partial_{H_3}\bigg(\frac{C_a^2}{C_s}-C_s\bigg)\bigg]\\
&-\frac{C_f^2C_s^2}{2C_a^2(C_f^2-C_s^2)}\bigg[\bigg(\frac{C_a^2}{C_f^2}-1\bigg)\bigg(\frac{C_a^2}{C_s^2}-1+\rho\partial_\rho \frac{C_a^2}{C_s^2}\bigg)+\frac{2C_a^2H_2}{C_s^3}\partial_{H_2}C_s+\frac{2C_a^2H_3}{C_s^3}\partial_{H_3}C_s\bigg]\\
&+\frac{C_f(C_a^2-C_s^2)}{2(C_f^2-C_s^2)}\bigg[\bigg(\frac{1}{C_s}+\frac{\rho\partial_\rho C_s}{C_s^2}\bigg)\bigg(\frac{C_a^2}{C_f^2}-1\bigg)+\frac{1}{C_s}-\frac{H_2}{C_s^2}\partial_{H_2}C_s-\frac{H_3}{C_s^2}\partial_{H_3}C_s\bigg]+\frac{C_f^2(C_a^2-C_s^2)}{2C_a^2(C_f^2-C_s^2)}\bigg\},\\
\gamma^1_{62}=&\frac{(\lambda_6-\lambda_2)C_f(C_f-C_a)(C_a^2-C_s)}{2C_a^2(C_f^2-C_s^2)},\ \gamma^1_{63}=0,\ \gamma^1_{64}=0,\ \gamma^1_{65}=0,\ \gamma^1_{67}=0,\\
\gamma^1_{72}=&-(\lambda_7-\lambda_2)\bigg\{\frac{C_f}{2(C_f^2-C_s^2)}\bigg[H_3\partial_{H_2}\bigg(\frac{C_a^2}{C_f}-C_f\bigg)-H_2\partial_{H_3}\bigg(\frac{C_a^2}{C_f}-C_f\bigg)\bigg]\\
&+\frac{C_a^2+C_s^2}{2C_f(C_f^2-C_s^2)}(H_3\partial_{H_2}C_f-H_2\partial_{H_3}C_f)\bigg\},\\
\gamma^1_{73}=&-(\lambda_7-\lambda_3)\bigg\{\frac{C_f}{2(C_f^2-C_s^2)}\bigg[H_2\partial_{H_2}\bigg(\frac{C_a^2}{C_f}-C_f\bigg)+H_3\partial_{H_3}\bigg(\frac{C_a^2}{C_f}-C_f\bigg)-\rho\bigg(\frac{C_a^2}{C_s^2}-1\bigg)\partial_{\rho}\bigg(\frac{C_a^2}{C_f}-C_f\bigg)\bigg]\\
&+\frac{C_f^2C_s^2}{2C_a^2(C_f^2-C_s^2)}\bigg[\bigg(\frac{C_a^2}{C_s^2}-1\bigg)\bigg(\frac{C_a^2}{C_f^2}-1+\rho\partial_\rho \frac{C_a^2}{C_f^2}\bigg)+\frac{2C_a^2H_2}{C_f^3}\partial_{H_2}C_f+\frac{2C_a^2H_3}{C_f^3}\partial_{H_3}C_f\bigg]\\
&+\frac{C_f(C_a^2-C_s^2)}{2(C_f^2-C_s^2)}\bigg[\frac{H_2}{C_f^2}\partial_{H_2}C_f+\frac{H_3}{C_f^2}\partial_{H_3}C_f-\bigg(\frac{1}{C_f}+\frac{\rho\partial_\rho C_f}{C_f^2}\bigg)\bigg(\frac{C_a^2}{C_s^2}-1\bigg)-\frac{1}{C_f}\bigg]+\frac{C_f^2(C_a^2-C_s^2)}{2C_a^2(C_f^2-C_s^2)}\bigg\},\\
\gamma^1_{74}=&-(\lambda_7-\lambda_4)\bigg\{\frac{C_f}{2(C_f^2-C_s^2)}\bigg[\frac{\rho}{\gamma}\partial_\rho \bigg(\frac{C_a^2}{C_f}-C_f\bigg)-\partial_S \bigg(\frac{C_a^2}{C_f}-C_f\bigg)\bigg]\\
&-\frac{C_f^2 C_s^2}{2 C_a^2(C_f^2-C_s^2)}\bigg[\frac{1}{\gamma}\partial_\rho \rho\bigg(\frac{C_a^2}{C_f^2}-1\bigg)+\frac{2C_a^2}{C_f^3}\partial_S C_f\bigg]+\frac{C_a^2-C_s^2}{2(C_f^2-C_s^2)}\bigg(\frac{\rho}{\gamma C_f}\partial_\rho C_f-\frac{\partial_S C_f}{C_f}+\frac{1}{\gamma}\bigg)\bigg\},
\end{align*}
\newpage
\begin{align*}
\gamma^1_{75}=&(\lambda_7-\lambda_5)\bigg\{\frac{C_f}{2(C_f^2-C_s^2)}\bigg[H_2\partial_{H_2}\bigg(\frac{C_a^2}{C_f}-C_f\bigg)+H_3\partial_{H_3}\bigg(\frac{C_a^2}{C_f}-C_f\bigg)-\rho\bigg(\frac{C_a^2}{C_s^2}-1\bigg)\partial_{\rho}\bigg(\frac{C_a^2}{C_f}-C_f\bigg)\bigg]\\
&+\frac{C_f^2C_s^2}{2C_a^2(C_f^2-C_s^2)}\bigg[\bigg(\frac{C_a^2}{C_s^2}-1\bigg)\bigg(\frac{C_a^2}{C_f^2}-1+\rho\partial_\rho \frac{C_a^2}{C_f^2}\bigg)+\frac{2C_a^2H_2}{C_f^3}\partial_{H_2}C_f+\frac{2C_a^2H_3}{C_f^3}\partial_{H_3}C_f\bigg]\\
&+\frac{C_f(C_a^2-C_s^2)}{2(C_f^2-C_s^2)}\bigg[\frac{H_2}{C_f^2}\partial_{H_2}C_f+\frac{H_3}{C_f^2}\partial_{H_3}C_f-\bigg(\frac{1}{C_f}+\frac{\rho\partial_\rho C_f}{C_f^2}\bigg)\bigg(\frac{C_a^2}{C_s^2}-1\bigg)-\frac{1}{C_f}\bigg]+\frac{C_f^2(C_a^2-C_s^2)}{2C_a^2(C_f^2-C_s^2)}\bigg\},\\
\gamma^1_{76}=&(\lambda_7-\lambda_6)\bigg\{\frac{C_f}{2(C_f^2-C_s^2)}\bigg[H_3\partial_{H_2}\bigg(\frac{C_a^2}{C_f}-C_f\bigg)-H_2\partial_{H_3}\bigg(\frac{C_a^2}{C_f}-C_f\bigg)\bigg]\\
&+\frac{C_a^2+C_s^2}{2C_f(C_f^2-C_s^2)}(H_3\partial_{H_2}C_f-H_2\partial_{H_3}C_f)\bigg\}.
\end{align*}
\end{small}
As one can see, the coefficients $\gamma^1_{km}$ have very complex forms. Nevertheless, thanks to the proper right eigenvectors, there is no singular factors like ${(C_a^2-C_s^2)}^{-1},{H_2}^{-1},{H_3}^{-1}$. Hence, every term in the formula is a simple function of $\Phi$ (or its derivatives), which is clearly of $O(1)$. Moreover, the potentially singular factor $\frac{1}{H_2^2+H_3^2}$ in the left eigenvectors, as we explained in Proposition \ref{coeff}, can always be \underline{cancelled} by a numerator which is either $H_2^2+H_3^2$ or $0$. Then, the combination of these $O(1)$ terms is also regular.

For $i=2$, we obtain
\begin{small}
\begin{align*}
\gamma^2_{21}=&\frac{\lambda_2-\lambda_1}{4}\bigg(\frac{C_a}{C_f}-1\bigg)^2,\ \gamma^2_{23}=\frac{(\lambda_2-\lambda_3)}{4}\bigg
(\frac{C_a}{C_s}-1\bigg)^2,\ \gamma^2_{24}=-\frac{\lambda_2-\lambda_4}{4\gamma},\\
\gamma^2_{25}=&-\frac{(\lambda_2-\lambda_5)}{4}\bigg(\frac{C_a}{C_s}+1\bigg)^2,\ \gamma^2_{26}=0,\ \gamma^2_{27}=-\frac{(\lambda_2-\lambda_7)}{4}\bigg(\frac{C_a}{C_f}+1\bigg)^2,\\
\gamma^2_{13}=&0,\ \gamma^2_{14}=0,\ \gamma^2_{15}=0,\ \gamma^2_{16}=\frac{(\lambda_1-\lambda_6)(C_a+C_f)}{2C_f},\ \gamma^2_{17}=0,\\
\gamma^2_{31}=&0,\ \gamma^2_{34}=0,\ \gamma^2_{35}=0,\ \gamma^2_{36}=-\frac{(\lambda_3-\lambda_6)(C_a+C_s)}{2C_s},\ \gamma^2_{37}=0,\\
\gamma^2_{41}=&0,\ \gamma^2_{43}=0,\ \gamma^2_{45}=0,\ \gamma^2_{46}=0,\ \gamma^2_{47}=0,\\
\gamma^2_{51}=&0,\ \gamma^2_{53}=0,\ \gamma^2_{54}=0,\ \gamma^2_{56}=-\frac{(\lambda_5-\lambda_6)(C_a-C_s)}{2C_s},\ \gamma^2_{57}=0,\\
\gamma^2_{61}=&\frac{\lambda_6-\lambda_1}{4}\bigg(\frac{C_a^2}{C_f^2}+1\bigg),\ \gamma^2_{63}=-\frac{(\lambda_6-\lambda_3)(C_a^2-C_s^2)}{4C_s^2},\ \gamma^2_{64}=-\frac{\lambda_6-\lambda_4}{4\gamma},\\
\gamma^2_{65}=&\frac{(\lambda_6-\lambda_5)(C_a^2-C_s^2)}{4C_s^2},\ \gamma^2_{67}=-\frac{\lambda_6-\lambda_7}{4}\bigg(\frac{C_a^2}{C_f^2}+1\bigg),\\
\gamma^2_{71}=&0,\ \gamma^2_{73}=0,\ \gamma^2_{74}=0,\ \gamma^2_{75}=0,\ \gamma^2_{76}=\frac{(\lambda_7-\lambda_6)(C_f-C_a)}{2C_f}.
\end{align*}
\end{small}
One can easily see that $\gamma^2_{km}$ are all uniformly bounded.

Next, we calculate $\gamma^3_{km}$.
\begin{small}
\begin{align*}
\gamma^3_{31}=&-(\lambda_3-\lambda_1)\bigg\{\frac{C_s}{2(C_f^2-C_s^2)}\bigg[H_2\partial_{H_2}\bigg(\frac{C_a^2}{C_f}-C_f\bigg)+H_3\partial_{H_3}\bigg(\frac{C_a^2}{C_f}-C_f\bigg)-\rho\bigg(\frac{C_a^2}{C_s^2}-1\bigg)\partial_{\rho}\bigg(\frac{C_a^2}{C_f}-C_f\bigg)\\
&-H_2\partial_{H_2}\bigg(\frac{C_a^2}{C_s}-C_s\bigg)-H_3\partial_{H_3}\bigg(\frac{C_a^2}{C_s}-C_s\bigg)+\rho\bigg(\frac{C_a^2}{C_f^2}-1\bigg)\partial_{\rho}\bigg(\frac{C_a^2}{C_s}-C_s\bigg)\bigg]\\
&-\frac{C_f^2C_s^2}{2C_a^2(C_f^2-C_s^2)}\bigg[\bigg(\frac{C_a^2}{C_s^2}-1\bigg)\bigg(\frac{C_a^2}{C_f^2}-1+\rho\partial_\rho \frac{C_a^2}{C_f^2}\bigg)+\frac{2C_a^2H_2}{C_f^3}\partial_{H_2}C_f+\frac{2C_a^2H_3}{C_f^3}\partial_{H_3}C_f\\
&-\bigg(\frac{C_a^2}{C_f^2}-1\bigg)\bigg(\frac{C_a^2}{C_s^2}-1+\rho\partial_\rho \frac{C_a^2}{C_s^2}\bigg)-\frac{2C_a^2H_2}{C_s^3}\partial_{H_2}C_s-\frac{2C_a^2H_3}{C_s^3}\partial_{H_3}C_s\bigg]\\
&+\frac{C_s(C_a^2-C_s^2)}{2(C_f^2-C_s^2)}\bigg[\frac{H_2}{C_f^2}\partial_{H_2}C_f+\frac{H_3}{C_f^2}\partial_{H_3}C_f-\bigg(\frac{1}{C_f}+\frac{\rho\partial_\rho C_f}{C_f^2}\bigg)\bigg(\frac{C_a^2}{C_s^2}-1\bigg)-\frac{1}{C_f}\\
&-\frac{H_2}{C_s^2}\partial_{H_2}C_s-\frac{H_3}{C_s^2}\partial_{H_3}C_s+\bigg(\frac{1}{C_s}+\frac{\rho\partial_\rho C_s}{C_s^2}\bigg)\bigg(\frac{C_a^2}{C_f^2}-1\bigg)+\frac{1}{C_s}\bigg]\bigg\},\\
\gamma^3_{32}=&(\lambda_3-\lambda_2)\bigg\{\frac{C_s}{2(C_f^2-C_s^2)}\bigg[H_3\partial_{H_2}\bigg(\frac{C_a^2}{C_s}-C_s\bigg)-H_2\partial_{H_3}\bigg(\frac{C_a^2}{C_s}-C_s\bigg)\bigg]\\
&+\frac{3C_f^2-C_a^2}{2C_s(C_f^2-C_s^2)}(H_3\partial_{H_2}C_s-H_2\partial_{H_3}C_s)\bigg\},\\
\gamma^3_{34}=&-(\lambda_3-\lambda_4)\bigg\{\frac{C_s}{2(C_f^2-C_s^2)}\bigg[-\frac{\rho}{\gamma}\partial_\rho \bigg(\frac{C_a^2}{C_s}-C_s\bigg)+\partial_S \bigg(\frac{C_a^2}{C_s}-C_s\bigg)\bigg]\\
&-\frac{C_f^2 C_s^2}{2C_a^2(C_f^2-C_s^2)}\bigg(\frac{\rho}{\gamma}\partial_\rho \frac{C_a^2}{C_s^2}+\frac{2C_a^2}{C_s^3}\partial_S C_s\bigg)-\frac{C_a^2-C_f^2}{2(C_f^2-C_s^2)}\bigg(\frac{\rho}{\gamma C_s}\partial_\rho C_s-\frac{\partial_S C_s}{C_s}+\frac{1}{\gamma}\bigg)\bigg\},\\
\gamma^3_{35}=&\frac{(\lambda_3-\lambda_5)(C_a^2-C_f^2)}{C_f^2-C_s^2}\bigg[\frac{\rho(C_a^2-C_s^2)}{C_s^3}\partial_\rho C_s+\frac{C_a^2}{C_s^2}-\frac{H_2}{C_s}\partial_{H_2}C_s-\frac{H_3}{C_s}\partial_{H_3}C_s\bigg],\\
\gamma^3_{36}=&-(\lambda_3-\lambda_6)\bigg\{\frac{C_s}{2(C_f^2-C_s^2)}\bigg[H_3\partial_{H_2}\bigg(\frac{C_a^2}{C_s}-C_s\bigg)-H_2\partial_{H_3}\bigg(\frac{C_a^2}{C_s}-C_s\bigg)\bigg]\\
&+\frac{3C_f^2-C_a^2}{2C_s(C_f^2-C_s^2)}(H_3\partial_{H_2}C_s-H_2\partial_{H_3}C_s)\bigg\},
\end{align*}
\newpage
\begin{align*}
\gamma^3_{37}=&-(\lambda_3-\lambda_7)\bigg\{\frac{C_s}{2(C_f^2-C_s^2)}\bigg[H_2\partial_{H_2}\bigg(\frac{C_a^2}{C_f}-C_f\bigg)+H_3\partial_{H_3}\bigg(\frac{C_a^2}{C_f}-C_f\bigg)-\rho\bigg(\frac{C_a^2}{C_s^2}-1\bigg)\partial_{\rho}\bigg(\frac{C_a^2}{C_f}-C_f\bigg)\\
&+H_2\partial_{H_2}\bigg(\frac{C_a^2}{C_s}-C_s\bigg)+H_3\partial_{H_3}\bigg(\frac{C_a^2}{C_s}-C_s\bigg)-\rho\bigg(\frac{C_a^2}{C_f^2}-1\bigg)\partial_{\rho}\bigg(\frac{C_a^2}{C_s}-C_s\bigg)\bigg]\\
&+\frac{C_f^2C_s^2}{2C_a^2(C_f^2-C_s^2)}\bigg[\bigg(\frac{C_a^2}{C_s^2}-1\bigg)\bigg(\frac{C_a^2}{C_f^2}-1+\rho\partial_\rho \frac{C_a^2}{C_f^2}\bigg)+\frac{2C_a^2H_2}{C_f^3}\partial_{H_2}C_f+\frac{2C_a^2H_3}{C_f^3}\partial_{H_3}C_f\\
&-\bigg(\frac{C_a^2}{C_f^2}-1\bigg)\bigg(\frac{C_a^2}{C_s^2}-1+\rho\partial_\rho \frac{C_a^2}{C_s^2}\bigg)-\frac{2C_a^2H_2}{C_s^3}\partial_{H_2}C_s-\frac{2C_a^2H_3}{C_s^3}\partial_{H_3}C_s\bigg]\\
&+\frac{C_s(C_a^2-C_s^2)}{2(C_f^2-C_s^2)}\bigg[\frac{H_2}{C_f^2}\partial_{H_2}C_f+\frac{H_3}{C_f^2}\partial_{H_3}C_f-\bigg(\frac{1}{C_f}+\frac{\rho\partial_\rho C_f}{C_f^2}\bigg)\bigg(\frac{C_a^2}{C_s^2}-1\bigg)-\frac{1}{C_f}\\
&+\frac{H_2}{C_s^2}\partial_{H_2}C_s+\frac{H_3}{C_s^2}\partial_{H_3}C_s-\bigg(\frac{1}{C_s}+\frac{\rho\partial_\rho C_s}{C_s^2}\bigg)\bigg(\frac{C_a^2}{C_f^2}-1\bigg)-\frac{1}{C_s}\bigg]\bigg\},\\
\gamma^3_{12}=&-(\lambda_1-\lambda_2)\bigg\{\frac{-C_s}{2(C_f^2-C_s^2)}\bigg[H_3\partial_{H_2}\bigg(\frac{C_a^2}{C_f}-C_f\bigg)-H_2\partial_{H_3}\bigg(\frac{C_a^2}{C_f}-C_f\bigg)\bigg]\\
&+\bigg[\frac{C_s^2}{(C_f^2-C_s^2)C_f}-\frac{C_s(C_a^2-C_f^2)}{2C_f^2(C_f^2-C_s^2)}\bigg](H_3\partial_{H_2}C_s-H_2\partial_{H_3}C_s)\bigg\},\\
\gamma^3_{14}=&(\lambda_1-\lambda_4)\bigg\{\frac{C_s}{2(C_f^2-C_s^2)}\bigg[\frac{\rho}{\gamma}\partial_\rho \bigg(\frac{C_a^2}{C_f}-C_f\bigg)-\partial_S \bigg(\frac{C_a^2}{C_f}-C_f\bigg)\bigg]\\
&+\frac{C_f^2 C_s^2}{2C_a^2(C_f^2-C_s^2)}\bigg[\frac{\rho}{\gamma}\partial_\rho \frac{C_a^2}{C_f^2}+\frac{2C_a^2}{C_f^3}\partial_S C_f+\frac{1}{\gamma}\bigg(\frac{C_a^2}{C_f^2}-1\bigg)\bigg]\\
&+\frac{C_s(C_a^2-C_s^2)}{2(C_f^2-C_s^2)}\bigg(\frac{\rho}{\gamma C_f^2}\partial_\rho C_f-\frac{\partial_S C_f}{C_f^2}+\frac{1}{\gamma C_f}\bigg)\bigg\},\\
\gamma^3_{15}=&-(\lambda_1-\lambda_5)\bigg\{\frac{C_s}{2(C_f^2-C_s^2)}\bigg[\rho\bigg(\frac{C_a^2}{C_s^2}-1\bigg)\partial_{\rho}\bigg(\frac{C_a^2}{C_f}-C_f\bigg)-H_2\partial_{H_2}\bigg(\frac{C_a^2}{C_f}-C_f\bigg)-H_3\partial_{H_3}\bigg(\frac{C_a^2}{C_f}-C_f\bigg)\bigg]\\
&+\frac{C_f^2C_s^2}{2C_a^2(C_f^2-C_s^2)}\bigg[\bigg(\frac{C_a^2}{C_s^2}-1\bigg)\bigg(\frac{C_a^2}{C_f^2}-1+\rho\partial_\rho \frac{C_a^2}{C_f^2}\bigg)+\frac{2C_a^2H_2}{C_f^3}\partial_{H_2}C_f+\frac{2C_a^2H_3}{C_f^3}\partial_{H_3}C_f\bigg]\\
&-\frac{C_s(C_a^2-C_s^2)}{2(C_f^2-C_s^2)}\bigg[\frac{H_2}{C_f^2}\partial_{H_2}C_f+\frac{H_3}{C_f^2}\partial_{H_3}C_f-\bigg(\frac{1}{C_f}+\frac{\rho\partial_\rho C_f}{C_f^2}\bigg)\bigg(\frac{C_a^2}{C_s^2}-1\bigg)-\frac{1}{C_f}\bigg]-\frac{C_s^2(C_a^2-C_f^2)}{2C_a^2(C_f^2-C_s^2)}\bigg\},
\end{align*}
\newpage
\begin{align*}
\gamma^3_{16}=&(\lambda_1-\lambda_6)\bigg\{\frac{-C_s}{2(C_f^2-C_s^2)}\bigg[H_3\partial_{H_2}\bigg(\frac{C_a^2}{C_f}-C_f\bigg)-H_2\partial_{H_3}\bigg(\frac{C_a^2}{C_f}-C_f\bigg)\bigg]\\
&+\bigg[\frac{C_s^2}{(C_f^2-C_s^2)C_f}-\frac{C_s(C_a^2-C_f^2)}{2C_f^2(C_f^2-C_s^2)}\bigg](H_3\partial_{H_2}C_s-H_2\partial_{H_3}C_s)\bigg\},\\
\gamma^3_{17}=&-(\lambda_1-\lambda_7)\bigg\{\frac{-C_s}{2(C_f^2-C_s^2)}\bigg[\rho\bigg(\frac{C_a^2}{C_f^2}-1\bigg)\partial_{\rho}\bigg(\frac{C_a^2}{C_f}-C_f\bigg)-H_2\partial_{H_2}\bigg(\frac{C_a^2}{C_f}-C_f\bigg)-H_3\partial_{H_3}\bigg(\frac{C_a^2}{C_f}-C_f\bigg)\bigg]\\
&+\frac{C_f^2C_s^2}{2\rho C_a^2(C_f^2-C_s^2)}\bigg[\frac{1}{2}\partial_\rho \bigg(\rho\big(\frac{C_a^2}{C_f^2}-1\big)\bigg)^2+\frac{2\rho C_a^2H_2}{C_f^3}\partial_{H_2}C_f+\frac{2\rho C_a^2H_3}{C_f^3}\partial_{H_3}C_f\bigg]\\
&-\frac{C_s(C_a^2-C_f^2)}{2(C_f^2-C_s^2)}\bigg[\frac{\rho\partial_\rho C_f}{C_f^2}\bigg(\frac{C_a^2}{C_f^2}-1\bigg)+\frac{C_a^2}{C_f^3}-\frac{H_2}{C_f^2}\partial_{H_2}C_f-\frac{H_3}{C_f^2}\partial_{H_3}C_f\bigg]-\frac{C_s^2(C_a^2-C_f^2)}{2C_a^2(C_f^2-C_s^2)}\bigg\},\\
\gamma^3_{21}=&0,\ \gamma^3_{24}=0,\ \gamma^3_{25}=0,\ \gamma^3_{26}=-\frac{C_s(\lambda_2-\lambda_6)(C_a+C_s)(C_a^2-C_f^2)}{2C_a^2(C_f^2-C_s^2)},\ \gamma^3_{27}=0,\\
\gamma^3_{41}=&\frac{(\lambda_4-\lambda_1)(C_a^2-C_f^2)C_s^2}{2\gamma C_a^2(C_f^2-C_s^2)},\ \gamma^3_{42}=0,\ \gamma^3_{45}=-\frac{(\lambda_4-\lambda_5)C_f^2(C_a^2-C_s^2)}{2\gamma C_a^2(C_f^2-C_s^2)},\ \gamma^3_{46}=0,\\
\gamma^3_{47}=&-\frac{(\lambda_4-\lambda_7)(C_a^2-C_f^2)C_s^2}{2\gamma C_a^2(C_f^2-C_s^2)},\\
\gamma^3_{51}=&-(\lambda_5-\lambda_1)\bigg\{\frac{C_s}{2(C_f^2-C_s^2)}\bigg[\rho\bigg(\frac{C_a^2}{C_f^2}-1\bigg)\partial_{\rho}\bigg(\frac{C_a^2}{C_s}-C_s\bigg)-H_2\partial_{H_2}\bigg(\frac{C_a^2}{C_s}-C_s\bigg)-H_3\partial_{H_3}\bigg(\frac{C_a^2}{C_s}-C_s\bigg)\bigg]\\
&+\frac{C_f^2C_s^2}{2C_a^2(C_f^2-C_s^2)}\bigg[\bigg(\frac{C_a^2}{C_f^2}-1\bigg)\bigg(\frac{C_a^2}{C_s^2}-1+\rho\partial_\rho \frac{C_a^2}{C_s^2}\bigg)+\frac{2C_a^2H_2}{C_s^3}\partial_{H_2}C_s+\frac{2C_a^2H_3}{C_s^3}\partial_{H_3}C_s\bigg]\\
&+\frac{C_s(C_a^2-C_s^2)}{2(C_f^2-C_s^2)}\bigg[\bigg(\frac{1}{C_s}+\frac{\rho\partial_\rho C_s}{C_s^2}\bigg)\bigg(\frac{C_a^2}{C_f^2}-1\bigg)+\frac{1}{C_s}-\frac{H_2}{C_s^2}\partial_{H_2}C_s-\frac{H_3}{C_s^2}\partial_{H_3}C_s\bigg]-\frac{C_s^2(C_a^2-C_f^2)}{2C_a^2(C_f^2-C_s^2)}\bigg\},\\
\gamma^3_{52}=&(\lambda_5-\lambda_2)\bigg\{\frac{C_s}{2(C_f^2-C_s^2)}\bigg[H_3\partial_{H_2}\bigg(\frac{C_a^2}{C_s}-C_s\bigg)-H_2\partial_{H_3}\bigg(\frac{C_a^2}{C_s}-C_s\bigg)\bigg]\\
&-\frac{C_a^2+C_f^2}{2C_s(C_f^2-C_s^2)}(H_3\partial_{H_2}C_s-H_2\partial_{H_3}C_s)\bigg\},\\
\gamma^3_{54}=&(\lambda_5-\lambda_4)\bigg\{\frac{-C_s}{2(C_f^2-C_s^2)}\bigg[-\frac{\rho}{\gamma}\partial_\rho \bigg(\frac{C_a^2}{C_s}-C_s\bigg)+\partial_S \bigg(\frac{C_a^2}{C_s}-C_s\bigg)\bigg]\\
&-\frac{C_f^2 C_s^2}{2C_a^2(C_f^2-C_s^2)}\bigg[\frac{1}{\gamma}\partial_\rho \rho\bigg(\frac{C_a^2}{C_s^2}-1\bigg)+\frac{2C_a^2}{C_s^3}\partial_S C_s\bigg]+\frac{C_a^2-C_f^2}{2(C_f^2-C_s^2)}\bigg(\frac{\rho}{\gamma C_s}\partial_\rho C_s-\frac{\partial_S C_s}{C_s}+\frac{1}{\gamma}\bigg)\bigg\},
\end{align*}
\newpage
\begin{align*}
\gamma^3_{56}=&(\lambda_5-\lambda_6)\bigg\{\frac{-C_s}{2(C_f^2-C_s^2)}\bigg[H_3\partial_{H_2}\bigg(\frac{C_a^2}{C_s}-C_s\bigg)-H_2\partial_{H_3}\bigg(\frac{C_a^2}{C_s}-C_s\bigg)\bigg]\\
&+\frac{C_a^2+C_f^2}{2C_s(C_f^2-C_s^2)}(H_3\partial_{H_2}C_s-H_2\partial_{H_3}C_s)\bigg\},\\
\gamma^3_{57}=&(\lambda_5-\lambda_7)\bigg\{\frac{C_s}{2(C_f^2-C_s^2)}\bigg[\rho\bigg(\frac{C_a^2}{C_f^2}-1\bigg)\partial_{\rho}\bigg(\frac{C_a^2}{C_s}-C_s\bigg)-H_2\partial_{H_2}\bigg(\frac{C_a^2}{C_s}-C_s\bigg)-H_3\partial_{H_3}\bigg(\frac{C_a^2}{C_s}-C_s\bigg)\bigg]\\
&+\frac{C_f^2C_s^2}{2C_a^2(C_f^2-C_s^2)}\bigg[\bigg(\frac{C_a^2}{C_f^2}-1\bigg)\bigg(\frac{C_a^2}{C_s^2}-1+\rho\partial_\rho \frac{C_a^2}{C_s^2}\bigg)+\frac{2C_a^2H_2}{C_s^3}\partial_{H_2}C_s+\frac{2C_a^2H_3}{C_s^3}\partial_{H_3}C_s\bigg]\\
&+\frac{C_s(C_a^2-C_s^2)}{2(C_f^2-C_s^2)}\bigg[\bigg(\frac{1}{C_s}+\frac{\rho\partial_\rho C_s}{C_s^2}\bigg)\bigg(\frac{C_a^2}{C_f^2}-1\bigg)+\frac{1}{C_s}-\frac{H_2}{C_s^2}\partial_{H_2}C_s-\frac{H_3}{C_s^2}\partial_{H_3}C_s\bigg]-\frac{C_s^2(C_a^2-C_f^2)}{2C_a^2(C_f^2-C_s^2)}\bigg\},\\
\gamma^3_{61}=&0,\ \gamma^3_{62}=-\frac{C_s(\lambda_6-\lambda_2)(C_s-C_a)(C_a^2-C_f^2)}{2C_a^2(C_f^2-C_s^2)},\ \gamma^3_{64}=0,\ \gamma^3_{65}=0,\ \gamma^3_{67}=0,\\
\gamma^3_{71}=&-(\lambda_7-\lambda_1)\bigg\{\frac{-C_s}{2(C_f^2-C_s^2)}\bigg[\rho\bigg(\frac{C_a^2}{C_f^2}-1\bigg)\partial_{\rho}\bigg(\frac{C_a^2}{C_f}-C_f\bigg)-H_2\partial_{H_2}\bigg(\frac{C_a^2}{C_f}-C_f\bigg)-H_3\partial_{H_3}\bigg(\frac{C_a^2}{C_f}-C_f\bigg)\bigg]\\
&+\frac{C_f^2C_s^2}{2\rho C_a^2(C_f^2-C_s^2)}\bigg[\frac{1}{2}\partial_\rho \bigg(\rho\big(\frac{C_a^2}{C_f^2}-1\big)\bigg)^2+\frac{2\rho C_a^2H_2}{C_f^3}\partial_{H_2}C_f+\frac{2\rho C_a^2H_3}{C_f^3}\partial_{H_3}C_f\bigg]\\
&+\frac{C_s(C_a^2-C_f^2)}{2(C_f^2-C_s^2)}\bigg[\frac{\rho\partial_\rho C_f}{C_f^2}\bigg(\frac{C_a^2}{C_f^2}-1\bigg)+\frac{C_a^2}{C_f^3}-\frac{H_2}{C_f^2}\partial_{H_2}C_f-\frac{H_3}{C_f^2}\partial_{H_3}C_f\bigg]-\frac{C_s^2(C_a^2-C_f^2)}{2C_a^2(C_f^2-C_s^2)}\bigg\},\\
\gamma^3_{72}=&(\lambda_7-\lambda_2)\bigg\{\frac{C_s}{2(C_f^2-C_s^2)}\bigg[H_3\partial_{H_2}\bigg(\frac{C_a^2}{C_f}-C_f\bigg)-H_2\partial_{H_3}\bigg(\frac{C_a^2}{C_f}-C_f\bigg)\bigg]\\
&+\bigg[\frac{C_s^2}{(C_f^2-C_s^2)C_f}+\frac{C_s(C_a^2-C_f^2)}{2C_f^2(C_f^2-C_s^2)}\bigg](H_3\partial_{H_2}C_s-H_2\partial_{H_3}C_s)\bigg\},\\
\gamma^3_{74}=&(\lambda_7-\lambda_4)\bigg\{\frac{C_s}{2(C_f^2-C_s^2)}\bigg[\frac{\rho}{\gamma}\partial_\rho \bigg(\frac{C_a^2}{C_f}-C_f\bigg)-\partial_S \bigg(\frac{C_a^2}{C_f}-C_f\bigg)\bigg]\\
&-\frac{C_f^2 C_s^2}{2C_a^2(C_f^2-C_s^2)}\bigg[\frac{\rho}{\gamma}\partial_\rho \frac{C_a^2}{C_f^2}+\frac{2C_a^2}{C_f^3}\partial_S C_f+\frac{1}{\gamma}\bigg(\frac{C_a^2}{C_f^2}-1\bigg)\bigg]+\frac{C_s(C_a^2-C_s^2)}{2(C_f^2-C_s^2)}\bigg(\frac{\rho}{\gamma C_f^2}\partial_\rho C_f-\frac{\partial_S C_f}{C_f^2}+\frac{1}{\gamma C_f}\bigg)\bigg\},\\
\gamma^3_{75}=&-(\lambda_7-\lambda_5)\bigg\{\frac{C_s}{2(C_f^2-C_s^2)}\bigg[\rho\bigg(\frac{C_a^2}{C_s^2}-1\bigg)\partial_{\rho}\bigg(\frac{C_a^2}{C_f}-C_f\bigg)-H_2\partial_{H_2}\bigg(\frac{C_a^2}{C_f}-C_f\bigg)-H_3\partial_{H_3}\bigg(\frac{C_a^2}{C_f}-C_f\bigg)\bigg]\\
&-\frac{C_f^2C_s^2}{2C_a^2(C_f^2-C_s^2)}\bigg[\bigg(\frac{C_a^2}{C_s^2}-1\bigg)\bigg(\frac{C_a^2}{C_f^2}-1+\rho\partial_\rho \frac{C_a^2}{C_f^2}\bigg)+\frac{2C_a^2H_2}{C_f^3}\partial_{H_2}C_f+\frac{2C_a^2H_3}{C_f^3}\partial_{H_3}C_f\bigg]\\
&-\frac{C_s(C_a^2-C_s^2)}{2(C_f^2-C_s^2)}\bigg[\frac{H_2}{C_f^2}\partial_{H_2}C_f+\frac{H_3}{C_f^2}\partial_{H_3}C_f-\bigg(\frac{1}{C_f}+\frac{\rho\partial_\rho C_f}{C_f^2}\bigg)\bigg(\frac{C_a^2}{C_s^2}-1\bigg)-\frac{1}{C_f}\bigg]+\frac{C_s^2(C_a^2-C_f^2)}{2C_a^2(C_f^2-C_s^2)}\bigg\},\\
\gamma^3_{76}=&(\lambda_7-\lambda_6)\bigg\{\frac{-C_s}{2(C_f^2-C_s^2)}\bigg[H_3\partial_{H_2}\bigg(\frac{C_a^2}{C_f}-C_f\bigg)-H_2\partial_{H_3}\bigg(\frac{C_a^2}{C_f}-C_f\bigg)\bigg]\\
&-\bigg[\frac{C_s^2}{(C_f^2-C_s^2)C_f}+\frac{C_s(C_a^2-C_f^2)}{2C_f^2(C_f^2-C_s^2)}\bigg](H_3\partial_{H_2}C_s-H_2\partial_{H_3}C_s)\bigg\}.
\end{align*}
\end{small}
Note that the coefficients $\gamma^3_{km}$ take similar forms to that in $\gamma^1_{km}$. In the same fashion, one can verify that $\gamma^3_{km}$ are all regular.

Next, for $i>4$, the remaining coefficients can be estimated in a similar manner based on the above calculations. This can be verified easily by noticing the following two facts: first, the potentially singular $\frac{1}{H_2^2+H_3^2}$ term appears only in the $2^{\text{nd}}$, $3^{\text{rd}}$, $5^{\text{th}}$, $6^{\text{th}}$ components of $l_i$ and $l_i^{1,4,7}$ are all regular. Since all $\nabla_\Phi r_k \cdot r_m$ are of order $1$, it holds that $l_i^j \cdot (\nabla_\Phi r_k \cdot r_m)_j=O(1)$ with $j=1,4,7$. Then, we only need to check
$$
\sum\limits_{j=2,3} l_i^j \cdot (\nabla_\Phi r_k \cdot r_m)_j, \quad \sum\limits_{j=5,6} l_i^j \cdot (\nabla_\Phi r_k \cdot r_m)_j.
$$
Second, the components of $l_i$ satisfy the following symmetry properties
$$
l_i^{1,2,3}=l_{8-i}^{1,2,3},\quad l_i^{4,5,6,7}=-l_{8-i}^{4,5,6,7}.
$$
Therefore, the rest coefficients $\gamma^i_{km}$ with $i>4$ can be regarded as \underline{counterparts} of the known ones and thus can be easily verified to be of $O(1)$. Now we also list all the other coefficients with for $i>4$:
\begin{small}
\begin{align*}
\gamma^5_{51}=&-(\lambda_5-\lambda_1)\bigg\{\frac{-C_s}{2(C_f^2-C_s^2)}\bigg[H_2\partial_{H_2}\bigg(\frac{C_a^2}{C_f}-C_f\bigg)+H_3\partial_{H_3}\bigg(\frac{C_a^2}{C_f}-C_f\bigg)-\rho\bigg(\frac{C_a^2}{C_s^2}-1\bigg)\partial_{\rho}\bigg(\frac{C_a^2}{C_f}-C_f\bigg)\\
&+H_2\partial_{H_2}\bigg(\frac{C_a^2}{C_s}-C_s\bigg)+H_3\partial_{H_3}\bigg(\frac{C_a^2}{C_s}-C_s\bigg)-\rho\bigg(\frac{C_a^2}{C_f^2}-1\bigg)\partial_{\rho}\bigg(\frac{C_a^2}{C_s}-C_s\bigg)\bigg]\\
&-\frac{C_f^2C_s^2}{2C_a^2(C_f^2-C_s^2)}\bigg[\bigg(\frac{C_a^2}{C_s^2}-1\bigg)\bigg(\frac{C_a^2}{C_f^2}-1+\rho\partial_\rho \frac{C_a^2}{C_f^2}\bigg)+\frac{2C_a^2H_2}{C_f^3}\partial_{H_2}C_f+\frac{2C_a^2H_3}{C_f^3}\partial_{H_3}C_f\\
&-\bigg(\frac{C_a^2}{C_f^2}-1\bigg)\bigg(\frac{C_a^2}{C_s^2}-1+\rho\partial_\rho \frac{C_a^2}{C_s^2}\bigg)-\frac{2C_a^2H_2}{C_s^3}\partial_{H_2}C_s-\frac{2C_a^2H_3}{C_s^3}\partial_{H_3}C_s\bigg]\\
&-\frac{C_s(C_a^2-C_s^2)}{2(C_f^2-C_s^2)}\bigg[\frac{H_2}{C_f^2}\partial_{H_2}C_f+\frac{H_3}{C_f^2}\partial_{H_3}C_f-\bigg(\frac{1}{C_f}+\frac{\rho\partial_\rho C_f}{C_f^2}\bigg)\bigg(\frac{C_a^2}{C_s^2}-1\bigg)-\frac{1}{C_f}\\
&+\frac{H_2}{C_s^2}\partial_{H_2}C_s+\frac{H_3}{C_s^2}\partial_{H_3}C_s-\bigg(\frac{1}{C_s}+\frac{\rho\partial_\rho C_s}{C_s^2}\bigg)\bigg(\frac{C_a^2}{C_f^2}-1\bigg)-\frac{1}{C_s}\bigg]\bigg\},\\
\gamma^5_{52}=&(\lambda_5-\lambda_2)\bigg\{\frac{C_s}{2(C_f^2-C_s^2)}\bigg[H_3\partial_{H_2}\bigg(\frac{C_a^2}{C_s}-C_s\bigg)-H_2\partial_{H_3}\bigg(\frac{C_a^2}{C_s}-C_s\bigg)\bigg]\\
&+\frac{3C_f^2-C_a^2}{2C_s(C_f^2-C_s^2)}(H_3\partial_{H_2}C_s-H_2\partial_{H_3}C_s)\bigg\},\\
\gamma^5_{53}=&\frac{(\lambda_3-\lambda_5)(C_a^2-C_f^2)}{C_f^2-C_s^2}\bigg[\frac{\rho(C_a^2-C_s^2)}{C_s^3}\partial_\rho C_s+\frac{C_a^2}{C_s^2}-\frac{H_2}{C_s}\partial_{H_2}C_s-\frac{H_3}{C_s}\partial_{H_3}C_s\bigg],\\
\gamma^5_{54}=&-(\lambda_5-\lambda_4)\bigg\{\frac{C_s}{2(C_f^2-C_s^2)}\bigg[-\frac{\rho}{\gamma}\partial_\rho \bigg(\frac{C_a^2}{C_s}-C_s\bigg)+\partial_S \bigg(\frac{C_a^2}{C_s}-C_s\bigg)\bigg]-\frac{C_f^2 C_s^2}{2C_a^2(C_f^2-C_s^2)}\bigg[\frac{\rho}{\gamma}\partial_\rho \frac{C_a^2}{C_s^2}+\frac{2C_a^2}{C_s^3}\partial_S C_s\bigg]\\
&-\frac{C_a^2-C_f^2}{2(C_f^2-C_s^2)}\bigg(\frac{\rho}{\gamma C_s}\partial_\rho C_s-\frac{\partial_S C_s}{C_s}+\frac{1}{\gamma}\bigg)\bigg\},\\
\gamma^5_{56}=&-(\lambda_5-\lambda_6)\bigg\{\frac{C_s}{2(C_f^2-C_s^2)}\bigg[H_3\partial_{H_2}\bigg(\frac{C_a^2}{C_s}-C_s\bigg)-H_2\partial_{H_3}\bigg(\frac{C_a^2}{C_s}-C_s\bigg)\bigg]\\
&+\frac{3C_f^2-C_a^2}{2C_s(C_f^2-C_s^2)}(H_3\partial_{H_2}C_s-H_2\partial_{H_3}C_s)\bigg\},\\
\gamma^5_{57}=&(\lambda_5-\lambda_7)\bigg\{\frac{C_s}{2(C_f^2-C_s^2)}\bigg[H_2\partial_{H_2}\bigg(\frac{C_a^2}{C_f}-C_f\bigg)+H_3\partial_{H_3}\bigg(\frac{C_a^2}{C_f}-C_f\bigg)-\rho\bigg(\frac{C_a^2}{C_s^2}-1\bigg)\partial_{\rho}\bigg(\frac{C_a^2}{C_f}-C_f\bigg)\\
&-H_2\partial_{H_2}\bigg(\frac{C_a^2}{C_s}-C_s\bigg)-H_3\partial_{H_3}\bigg(\frac{C_a^2}{C_s}-C_s\bigg)+\rho\bigg(\frac{C_a^2}{C_f^2}-1\bigg)\partial_{\rho}\bigg(\frac{C_a^2}{C_s}-C_s\bigg)\bigg]\\
&-\frac{C_f^2C_s^2}{2C_a^2(C_f^2-C_s^2)}\bigg[\bigg(\frac{C_a^2}{C_s^2}-1\bigg)\bigg(\frac{C_a^2}{C_f^2}-1+\rho\partial_\rho \frac{C_a^2}{C_f^2}\bigg)+\frac{2C_a^2H_2}{C_f^3}\partial_{H_2}C_f+\frac{2C_a^2H_3}{C_f^3}\partial_{H_3}C_f\\
&-\bigg(\frac{C_a^2}{C_f^2}-1\bigg)\bigg(\frac{C_a^2}{C_s^2}-1+\rho\partial_\rho \frac{C_a^2}{C_s^2}\bigg)-\frac{2C_a^2H_2}{C_s^3}\partial_{H_2}C_s-\frac{2C_a^2H_3}{C_s^3}\partial_{H_3}C_s]\\
&+\frac{C_s(C_a^2-C_s^2)}{2(C_f^2-C_s^2)}\bigg[\frac{H_2}{C_f^2}\partial_{H_2}C_f+\frac{H_3}{C_f^2}\partial_{H_3}C_f-\bigg(\frac{1}{C_f}+\frac{\rho\partial_\rho C_f}{C_f^2}\bigg)\bigg(\frac{C_a^2}{C_s^2}-1\bigg)-\frac{1}{C_f}\\
&-\frac{H_2}{C_s^2}\partial_{H_2}C_s-\frac{H_3}{C_s^2}\partial_{H_3}C_s+\bigg(\frac{1}{C_s}+\frac{\rho\partial_\rho C_s}{C_s^2}\bigg)\bigg(\frac{C_a^2}{C_f^2}-1\bigg)+\frac{1}{C_s}\bigg]\bigg\},\\
\gamma^5_{12}=&-(\lambda_1-\lambda_2)\bigg\{\frac{-C_s}{2(C_f^2-C_s^2)}\bigg[H_3\partial_{H_2}\bigg(\frac{C_a^2}{C_f}-C_f\bigg)-H_2\partial_{H_3}\bigg(\frac{C_a^2}{C_f}-C_f\bigg)\bigg]\\
&-\bigg[\frac{C_s^2}{(C_f^2-C_s^2)C_f}+\frac{C_s(C_a^2-C_f^2)}{2C_f^2(C_f^2-C_s^2)}\bigg](H_3\partial_{H_2}C_s-H_2\partial_{H_3}C_s)\bigg\},\\
\gamma^5_{13}=&(\lambda_1-\lambda_3)\bigg\{\frac{C_s}{2(C_f^2-C_s^2)}\bigg[\rho\bigg(\frac{C_a^2}{C_s^2}-1\bigg)\partial_{\rho}\bigg(\frac{C_a^2}{C_f}-C_f\bigg)-H_2\partial_{H_2}\bigg(\frac{C_a^2}{C_f}-C_f\bigg)-H_3\partial_{H_3}\bigg(\frac{C_a^2}{C_f}-C_f\bigg)\bigg]\\
&-\frac{C_f^2C_s^2}{2C_a^2(C_f^2-C_s^2)}\bigg[\bigg(\frac{C_a^2}{C_s^2}-1\bigg)\bigg(\frac{C_a^2}{C_f^2}-1+\rho\partial_\rho \frac{C_a^2}{C_f^2}\bigg)+\frac{2C_a^2H_2}{C_f^3}\partial_{H_2}C_f+\frac{2C_a^2H_3}{C_f^3}\partial_{H_3}C_f\bigg]\\
&-\frac{C_s(C_a^2-C_s^2)}{2(C_f^2-C_s^2)}\bigg[\frac{H_2}{C_f^2}\partial_{H_2}C_f+\frac{H_3}{C_f^2}\partial_{H_3}C_f-\bigg(\frac{1}{C_f}+\frac{\rho\partial_\rho C_f}{C_f^2}\bigg)\bigg(\frac{C_a^2}{C_s^2}-1\bigg)-\frac{1}{C_f}\bigg]+\frac{C_s^2(C_a^2-C_f^2)}{2C_a^2(C_f^2-C_s^2)}\bigg\},
\end{align*}
\newpage
\begin{align*}
\gamma^5_{14}=&(\lambda_1-\lambda_4)\bigg\{\frac{C_s}{2(C_f^2-C_s^2)}\bigg[\frac{\rho}{\gamma}\partial_\rho \bigg(\frac{C_a^2}{C_f}-C_f\bigg)-\partial_S \bigg(\frac{C_a^2}{C_f}-C_f\bigg)\bigg]\\
&-\frac{C_f^2 C_s^2}{2C_a^2(C_f^2-C_s^2)}\bigg[\frac{\rho}{\gamma}\partial_\rho \frac{C_a^2}{C_f^2}+\frac{2C_a^2}{C_f^3}\partial_S C_f+\frac{1}{\gamma}\bigg(\frac{C_a^2}{C_f^2}-1\bigg)\bigg]+\frac{C_s(C_a^2-C_s^2)}{2(C_f^2-C_s^2)}\bigg(\frac{\rho}{\gamma C_f^2}\partial_\rho C_f-\frac{\partial_S C_f}{C_f^2}+\frac{1}{\gamma C_f}\bigg)\bigg\},\\
\gamma^5_{16}=&(\lambda_1-\lambda_6)\bigg\{\frac{-C_s}{2(C_f^2-C_s^2)}\bigg[H_3\partial_{H_2}\bigg(\frac{C_a^2}{C_f}-C_f\bigg)-H_2\partial_{H_3}\bigg(\frac{C_a^2}{C_f}-C_f\bigg)\bigg]\\
&-\bigg[\frac{C_s^2}{(C_f^2-C_s^2)C_f}+\frac{C_s(C_a^2-C_f^2)}{2C_f^2(C_f^2-C_s^2)}\bigg](H_3\partial_{H_2}C_s-H_2\partial_{H_3}C_s)\bigg\},\\
\gamma^5_{17}=&-(\lambda_1-\lambda_7)\bigg\{\frac{-C_s}{2(C_f^2-C_s^2)}\bigg[\rho\bigg(\frac{C_a^2}{C_f^2}-1\bigg)\partial_{\rho}\bigg(\frac{C_a^2}{C_f}-C_f\bigg)-H_2\partial_{H_2}\bigg(\frac{C_a^2}{C_f}-C_f\bigg)-H_3\partial_{H_3}\bigg(\frac{C_a^2}{C_f}-C_f\bigg)\bigg]\\
&-\frac{C_f^2C_s^2}{2\rho C_a^2(C_f^2-C_s^2)}\bigg[\frac{1}{2}\partial_\rho \bigg(\rho\big(\frac{C_a^2}{C_f^2}-1\big)\bigg)^2+\frac{2\rho C_a^2H_2}{C_f^3}\partial_{H_2}C_f+\frac{2\rho C_a^2H_3}{C_f^3}\partial_{H_3}C_f\bigg]\\
&-\frac{C_s(C_a^2-C_f^2)}{2(C_f^2-C_s^2)}\bigg[\frac{\rho\partial_\rho C_f}{C_f^2}\bigg(\frac{C_a^2}{C_f^2}-1\bigg)+\frac{C_a^2}{C_f^3}-\frac{H_2}{C_f^2}\partial_{H_2}C_f-\frac{H_3}{C_f^2}\partial_{H_3}C_f\bigg]+\frac{C_s^2(C_a^2-C_f^2)}{2C_a^2(C_f^2-C_s^2)}\bigg\},\\
\gamma^5_{21}=&0,\ \gamma^5_{23}=0,\ \gamma^5_{24}=0,\ \gamma^5_{26}=-\frac{C_s(\lambda_2-\lambda_6)(C_a-C_s)(C_a^2-C_f^2)}{2C_a^2(C_f^2-C_s^2)},\ \gamma^5_{27}=0,\\
\gamma^5_{31}=&-(\lambda_3-\lambda_1)\bigg\{\frac{C_s}{2(C_f^2-C_s^2)}\bigg[\rho\bigg(\frac{C_a^2}{C_f^2}-1\bigg)\partial_{\rho}\bigg(\frac{C_a^2}{C_s}-C_s\bigg)-H_2\partial_{H_2}\bigg(\frac{C_a^2}{C_s}-C_s\bigg)-H_3\partial_{H_3}\bigg(\frac{C_a^2}{C_s}-C_s\bigg)\bigg]\\
&+\frac{C_f^2C_s^2}{2C_a^2(C_f^2-C_s^2)}\bigg[\bigg(\frac{C_a^2}{C_f^2}-1\bigg)\bigg(\frac{C_a^2}{C_s^2}-1+\rho\partial_\rho \frac{C_a^2}{C_s^2}\bigg)+\frac{2C_a^2H_2}{C_s^3}\partial_{H_2}C_s+\frac{2C_a^2H_3}{C_s^3}\partial_{H_3}C_s\bigg]\\
&+\frac{C_s(C_a^2-C_s^2)}{2(C_f^2-C_s^2)}\bigg[\bigg(\frac{1}{C_s}+\frac{\rho\partial_\rho C_s}{C_s^2}\bigg)\bigg(\frac{C_a^2}{C_f^2}-1\bigg)+\frac{1}{C_s}-\frac{H_2}{C_s^2}\partial_{H_2}C_s-\frac{H_3}{C_s^2}\partial_{H_3}C_s\bigg]-\frac{C_s^2(C_a^2-C_f^2)}{2C_a^2(C_f^2-C_s^2)}\bigg\},\\
\gamma^5_{32}=&(\lambda_3-\lambda_2)\bigg\{\frac{C_s}{2(C_f^2-C_s^2)}\bigg[H_3\partial_{H_2}\bigg(\frac{C_a^2}{C_s}-C_s\bigg)-H_2\partial_{H_3}\bigg(\frac{C_a^2}{C_s}-C_s\bigg)\bigg]\\
&-\frac{C_a^2+C_f^2}{2C_s(C_f^2-C_s^2)}(H_3\partial_{H_2}C_s-H_2\partial_{H_3}C_s)\bigg\},\\
\gamma^5_{34}=&-(\lambda_3-\lambda_4)\bigg\{\frac{C_s}{2(C_f^2-C_s^2)}\bigg[-\frac{\rho}{\gamma}\partial_\rho \bigg(\frac{C_a^2}{C_s}-C_s\bigg)+\partial_S \bigg(\frac{C_a^2}{C_s}-C_s\bigg)\bigg]\\
&+\frac{C_f^2 C_s^2}{2C_a^2(C_f^2-C_s^2)}\bigg[\frac{1}{\gamma}\partial_\rho \rho(\frac{C_a^2}{C_s^2}-1)+\frac{2C_a^2}{C_s^3}\partial_S C_s\bigg]-\frac{C_a^2-C_f^2}{2(C_f^2-C_s^2)}\bigg(\frac{\rho}{\gamma C_s}\partial_\rho C_s-\frac{\partial_S C_s}{C_s}+\frac{1}{\gamma}\bigg)\bigg\},
\end{align*}
\newpage
\begin{align*}
\gamma^5_{36}=&-(\lambda_3-\lambda_6)\bigg\{\frac{C_s}{2(C_f^2-C_s^2)}\bigg[H_3\partial_{H_2}\bigg(\frac{C_a^2}{C_s}-C_s\bigg)-H_2\partial_{H_3}\bigg(\frac{C_a^2}{C_s}-C_s\bigg)\bigg]\\
&-\frac{C_a^2+C_f^2}{2C_s(C_f^2-C_s^2)}(H_3\partial_{H_2}C_s-H_2\partial_{H_3}C_s)\bigg\},\\
\gamma^5_{37}=&(\lambda_3-\lambda_7)\bigg\{\frac{C_s}{2(C_f^2-C_s^2)}\bigg[\rho\bigg(\frac{C_a^2}{C_f^2}-1\bigg)\partial_{\rho}\bigg(\frac{C_a^2}{C_s}-C_s\bigg)-H_2\partial_{H_2}\bigg(\frac{C_a^2}{C_s}-C_s\bigg)-H_3\partial_{H_3}\bigg(\frac{C_a^2}{C_s}-C_s\bigg)\bigg]\\
&+\frac{C_f^2C_s^2}{2C_a^2(C_f^2-C_s^2)}\bigg[\bigg(\frac{C_a^2}{C_f^2}-1\bigg)\bigg(\frac{C_a^2}{C_s^2}-1+\rho\partial_\rho \frac{C_a^2}{C_s^2}\bigg)+\frac{2C_a^2H_2}{C_s^3}\partial_{H_2}C_s+\frac{2C_a^2H_3}{C_s^3}\partial_{H_3}C_s\bigg]\\
&+\frac{C_s(C_a^2-C_s^2)}{2(C_f^2-C_s^2)}\bigg[\bigg(\frac{1}{C_s}+\frac{\rho\partial_\rho C_s}{C_s^2}\bigg)\bigg(\frac{C_a^2}{C_f^2}-1\bigg)+\frac{1}{C_s}-\frac{H_2}{C_s^2}\partial_{H_2}C_s-\frac{H_3}{C_s^2}\partial_{H_3}C_s\bigg]-\frac{C_s^2(C_a^2-C_f^2)}{2C_a^2(C_f^2-C_s^2)}\bigg\},\\
\gamma^5_{41}=&-\frac{(\lambda_4-\lambda_1)(C_a^2-C_f^2)C_s^2}{2\gamma C_a^2(C_f^2-C_s^2)},\ \gamma^5_{42}=0,\ \gamma^5_{43}=-\frac{(\lambda_4-\lambda_3)C_f^2(C_a^2-C_s^2)}{2\gamma C_a^2(C_f^2-C_s^2)},\ \gamma^5_{46}=0,\\
\gamma^5_{47}=&\frac{(\lambda_4-\lambda_7)(C_a^2-C_f^2)C_s^2}{2\gamma C_a^2(C_f^2-C_s^2)},\\
\gamma^5_{61}=&0,\ \gamma^5_{62}=\frac{C_s(\lambda_6-\lambda_2)(C_s+C_a)(C_a^2-C_f^2)}{2C_a^2(C_f^2-C_s^2)},\ \gamma^5_{63}=0,\ \gamma^5_{64}=0,\ \gamma^5_{67}=0,\\
\gamma^5_{71}=&-(\lambda_7-\lambda_1)\bigg\{\frac{-C_s}{2(C_f^2-C_s^2)}\bigg[\rho\bigg(\frac{C_a^2}{C_f^2}-1\bigg)\partial_{\rho}\bigg(\frac{C_a^2}{C_f}-C_f\bigg)-H_2\partial_{H_2}\bigg(\frac{C_a^2}{C_f}-C_f\bigg)-H_3\partial_{H_3}\bigg(\frac{C_a^2}{C_f}-C_f\bigg)\bigg]\\
&-\frac{C_f^2C_s^2}{2\rho C_a^2(C_f^2-C_s^2)}\bigg[\frac{1}{2}\partial_\rho \bigg(\rho\big(\frac{C_a^2}{C_f^2}-1\big)\bigg)^2+\frac{2\rho C_a^2H_2}{C_f^3}\partial_{H_2}C_f+\frac{2\rho C_a^2H_3}{C_f^3}\partial_{H_3}C_f\bigg]\\
&+\frac{C_s(C_a^2-C_f^2)}{2(C_f^2-C_s^2)}\bigg[\frac{\rho\partial_\rho C_f}{C_f^2}\bigg(\frac{C_a^2}{C_f^2}-1\bigg)+\frac{C_a^2}{C_f^3}-\frac{H_2}{C_f^2}\partial_{H_2}C_f-\frac{H_3}{C_f^2}\partial_{H_3}C_f\bigg]+\frac{C_s^2(C_a^2-C_f^2)}{2C_a^2(C_f^2-C_s^2)}\bigg\},\\
\gamma^5_{72}=&-(\lambda_7-\lambda_2)\bigg\{\frac{-C_s}{2(C_f^2-C_s^2)}\bigg[H_3\partial_{H_2}\bigg(\frac{C_a^2}{C_f}-C_f\bigg)-H_2\partial_{H_3}\bigg(\frac{C_a^2}{C_f}-C_f\bigg)\bigg]\\
&+\bigg[\frac{C_s^2}{(C_f^2-C_s^2)C_f}-\frac{C_s(C_a^2-C_f^2)}{2C_f^2(C_f^2-C_s^2)}\bigg](H_3\partial_{H_2}C_s-H_2\partial_{H_3}C_s)\bigg\},\\
\gamma^5_{73}=&(\lambda_7-\lambda_3)\bigg\{\frac{C_s}{2(C_f^2-C_s^2)}\bigg[\rho\bigg(\frac{C_a^2}{C_s^2}-1\bigg)\partial_{\rho}\bigg(\frac{C_a^2}{C_f}-C_f\bigg)-H_2\partial_{H_2}\bigg(\frac{C_a^2}{C_f}-C_f\bigg)-H_3\partial_{H_3}\bigg(\frac{C_a^2}{C_f}-C_f\bigg)\bigg]\\
&+\frac{C_f^2C_s^2}{2C_a^2(C_f^2-C_s^2)}\bigg[\bigg(\frac{C_a^2}{C_s^2}-1\bigg)\bigg(\frac{C_a^2}{C_f^2}-1+\rho\partial_\rho \frac{C_a^2}{C_f^2}\bigg)+\frac{2C_a^2H_2}{C_f^3}\partial_{H_2}C_f+\frac{2C_a^2H_3}{C_f^3}\partial_{H_3}C_f\bigg]\\
&-\frac{C_s(C_a^2-C_s^2)}{2(C_f^2-C_s^2)}\bigg[\frac{H_2}{C_f^2}\partial_{H_2}C_f+\frac{H_3}{C_f^2}\partial_{H_3}C_f-\bigg(\frac{1}{C_f}+\frac{\rho\partial_\rho C_f}{C_f^2}\bigg)\bigg(\frac{C_a^2}{C_s^2}-1\bigg)-\frac{1}{C_f}\bigg]-\frac{C_s^2(C_a^2-C_f^2)}{2C_a^2(C_f^2-C_s^2)}\bigg\},\\
\gamma^5_{74}=&(\lambda_7-\lambda_4)\bigg\{\frac{C_s}{2(C_f^2-C_s^2)}\bigg[\frac{\rho}{\gamma}\partial_\rho \bigg(\frac{C_a^2}{C_f}-C_f\bigg)-\partial_S \bigg(\frac{C_a^2}{C_f}-C_f\bigg)\bigg]\\
&+\frac{C_f^2 C_s^2}{2C_a^2(C_f^2-C_s^2)}\bigg[\frac{\rho}{\gamma}\partial_\rho \frac{C_a^2}{C_f^2}+\frac{2C_a^2}{C_f^3}\partial_S C_f+\frac{1}{\gamma}\bigg(\frac{C_a^2}{C_f^2}-1\bigg)\bigg]+\frac{C_s(C_a^2-C_s^2)}{2(C_f^2-C_s^2)}\bigg(\frac{\rho}{\gamma C_f^2}\partial_\rho C_f-\frac{\partial_S C_f}{C_f^2}+\frac{1}{\gamma C_f}\bigg)\bigg\},\\
\gamma^5_{76}=&(\lambda_7-\lambda_6)\bigg\{\frac{-C_s}{2(C_f^2-C_s^2)}\bigg[H_3\partial_{H_2}\bigg(\frac{C_a^2}{C_f}-C_f\bigg)-H_2\partial_{H_3}\bigg(\frac{C_a^2}{C_f}-C_f\bigg)\bigg]\\
&+\bigg[\frac{C_s^2}{(C_f^2-C_s^2)C_f}-\frac{C_s(C_a^2-C_f^2)}{2C_f^2(C_f^2-C_s^2)}\bigg](H_3\partial_{H_2}C_s-H_2\partial_{H_3}C_s)\bigg\},\\
\gamma^6_{61}=&\frac{(\lambda_6-\lambda_1)}{4}\bigg(\frac{C_a}{C_f}+1\bigg)^2,\ \gamma^6_{62}=0,\ \gamma^6_{63}=\frac{(\lambda_6-\lambda_3)}{4}\bigg(\frac{C_a}{C_s}+1\bigg)^2,\\
\gamma^6_{64}=&-\frac{\lambda_6-\lambda_4}{4\gamma},\ \gamma^6_{65}=-\frac{(\lambda_6-\lambda_5)}{4}\bigg(\frac{C_a}{C_s}-1\bigg)^2,\ \gamma^6_{67}=-\frac{\lambda_6-\lambda_7}{4}\bigg(\frac{C_a}{C_f}-1\bigg)^2,\\
\gamma^6_{12}=&-\frac{(\lambda_1-\lambda_2)(C_f-C_a)}{2C_f},\ \gamma^6_{13}=0,\ \gamma^6_{14}=0,\ \gamma^6_{15}=0,\ \gamma^6_{17}=0,\\
\gamma^6_{21}=&\frac{\lambda_2-\lambda_1}{4}\bigg(\frac{C_a^2}{C_f^2}+1\bigg),\ \gamma^6_{23}=-\frac{(\lambda_2-\lambda_3)(C_a^2-C_s^2)}{4C_s^2},\ \gamma^6_{24}=-\frac{\lambda_2-\lambda_4}{4\gamma},\\
\gamma^6_{25}=&\frac{(\lambda_2-\lambda_5)(C_a^2-C_s^2)}{4C_s^2},\ \gamma^6_{27}=-\frac{\lambda_2-\lambda_7}{4}\bigg(\frac{C_a^2}{C_f^2}+1\bigg),\\
\gamma^6_{31}=&0,\ \gamma^6_{32}=\frac{(\lambda_3-\lambda_2)(C_a-C_s)}{2C_s},\ \gamma^6_{34}=0,\ \gamma^6_{35}=0,\ \gamma^6_{37}=0,\\
\gamma^6_{41}=&0,\ \gamma^6_{42}=0,\ \gamma^6_{43}=0,\ \gamma^6_{45}=0,\ \gamma^6_{47}=0,\\
\gamma^6_{51}=&0,\ \gamma^6_{52}=\frac{(\lambda_5-\lambda_2)(C_a+C_s)}{2C_s},\ \gamma^6_{53}=0,\ \gamma^6_{54}=0,\ \gamma^6_{57}=0,\\
\gamma^6_{71}=&0,\ \gamma^6_{72}=-\frac{(\lambda_7-\lambda_2)(C_a+C_f)}{2C_f},\ \gamma^6_{73}=0,\ \gamma^6_{74}=0,\ \gamma^6_{75}=0,\\
\gamma^7_{71}=&-\frac{(\lambda_1-\lambda_7)(C_a^2-C_s^2)}{C_f^2-C_s^2}\bigg[\frac{\rho(C_a^2-C_f^2)}{C_f^3}\partial_\rho C_f+\frac{C_a^2}{C_f^2}-\frac{H_2}{C_f}\partial_{H_2}C_f-\frac{H_3}{C_f}\partial_{H_3}C_f\bigg],\\
\gamma^7_{72}=&-(\lambda_7-\lambda_2)\bigg\{\frac{C_f}{2(C_f^2-C_s^2)}\bigg[H_3\partial_{H_2}\bigg(\frac{C_a^2}{C_f}-C_f\bigg)-H_2\partial_{H_3}\bigg(\frac{C_a^2}{C_f}-C_f\bigg)\bigg]\\
&+\frac{C_a^2-3C_s^2}{2C_f(C_f^2-C_s^2)}(H_3\partial_{H_2}C_f-H_2\partial_{H_3}C_f)\bigg\},
\end{align*}
\newpage
\begin{align*}
\gamma^7_{73}=&(\lambda_7-\lambda_3)\bigg\{\frac{C_f}{2(C_f^2-C_s^2)}\bigg[\rho\bigg(\frac{C_a^2}{C_s^2}-1\bigg)\partial_{\rho}\bigg(\frac{C_a^2}{C_f}-C_f\bigg)-H_2\partial_{H_2}\bigg(\frac{C_a^2}{C_f}-C_f\bigg)-H_3\partial_{H_3}\bigg(\frac{C_a^2}{C_f}-C_f\bigg)\\
&-H_2\partial_{H_2}\bigg(\frac{C_a^2}{C_s}-C_s\bigg)-H_3\partial_{H_3}\bigg(\frac{C_a^2}{C_s}-C_s\bigg)+\rho\bigg(\frac{C_a^2}{C_f^2}-1\bigg)\partial_{\rho}\bigg(\frac{C_a^2}{C_s}-C_s\bigg)\bigg]\\
&+\frac{C_f^2C_s^2}{2C_a^2(C_f^2-C_s^2)}\bigg[\bigg(\frac{C_a^2}{C_s^2}-1\bigg)\bigg(\frac{C_a^2}{C_f^2}-1+\rho\partial_\rho \frac{C_a^2}{C_f^2}\bigg)+\frac{2C_a^2H_2}{C_f^3}\partial_{H_2}C_f+\frac{2C_a^2H_3}{C_f^3}\partial_{H_3}C_f\\
&-\bigg(\frac{C_a^2}{C_f^2}-1\bigg)\bigg(\frac{C_a^2}{C_s^2}-1+\rho\partial_\rho \frac{C_a^2}{C_s^2}\bigg)-\frac{2C_a^2H_2}{C_s^3}\partial_{H_2}C_s-\frac{2C_a^2H_3}{C_s^3}\partial_{H_3}C_s\bigg]\\
&+\frac{C_f(C_a^2-C_s^2)}{2(C_f^2-C_s^2)}\bigg[C_f(\frac{1}{C_f}+\frac{\rho\partial_\rho C_f}{C_f^2})\bigg(\frac{C_a^2}{C_s^2}-1\bigg)+\frac{1}{C_f}-\frac{H_2}{C_f^2}\partial_{H_2}C_f-\frac{H_3}{C_f^2}\partial_{H_3}\\
&-\frac{H_2}{C_s^2}\partial_{H_2}C_s-\frac{H_3}{C_s^2}\partial_{H_3}C_s+\bigg(\frac{1}{C_s}+\frac{\rho\partial_\rho C_s}{C_s^2}\bigg)\bigg(\frac{C_a^2}{C_f^2}-1\bigg)+\frac{1}{C_s}\bigg]\bigg\},\\
\gamma^7_{74}=&-(\lambda_7-\lambda_4)\bigg\{\frac{C_f}{2(C_f^2-C_s^2)}\bigg[\frac{\rho}{\gamma}\partial_\rho \bigg(\frac{C_a^2}{C_f}-C_f\bigg)-\partial_S \bigg(\frac{C_a^2}{C_f}-C_f\bigg)\bigg]+\frac{C_f^2 C_s^2}{2C_a^2(C_f^2-C_s^2)}\bigg(\frac{\rho}{\gamma}\partial_\rho \frac{C_a^2}{C_f^2}+\frac{2C_a^2}{C_f^3}\partial_S C_f\bigg)\\
&+\frac{C_a^2-C_s^2}{2(C_f^2-C_s^2)}\bigg(\frac{\rho}{\gamma C_f}\partial_\rho C_f-\frac{\partial_S C_f}{C_f}+\frac{1}{\gamma}\bigg)\bigg\},\\
\gamma^7_{75}=&(\lambda_7-\lambda_5)\bigg\{\frac{C_f}{2(C_f^2-C_s^2)}\bigg[H_2\partial_{H_2}\bigg(\frac{C_a^2}{C_f}-C_f\bigg)+H_3\partial_{H_3}\bigg(\frac{C_a^2}{C_f}-C_f\bigg)-\rho\bigg(\frac{C_a^2}{C_s^2}-1\bigg)\partial_{\rho}\bigg(\frac{C_a^2}{C_f}-C_f\bigg)\\
&-H_2\partial_{H_2}\bigg(\frac{C_a^2}{C_s}-C_s\bigg)-H_3\partial_{H_3}\bigg(\frac{C_a^2}{C_s}-C_s\bigg)+\rho\bigg(\frac{C_a^2}{C_f^2}-1\bigg)\partial_{\rho}\bigg(\frac{C_a^2}{C_s}-C_s\bigg)\bigg]\\
&-\frac{C_f^2C_s^2}{2C_a^2(C_f^2-C_s^2)}\bigg[\bigg(\frac{C_a^2}{C_s^2}-1\bigg)\bigg(\frac{C_a^2}{C_f^2}-1+\rho\partial_\rho \frac{C_a^2}{C_f^2}\bigg)+\frac{2C_a^2H_2}{C_f^3}\partial_{H_2}C_f+\frac{2C_a^2H_3}{C_f^3}\partial_{H_3}C_f\\
&-\bigg(\frac{C_a^2}{C_f^2}-1\bigg)\bigg(\frac{C_a^2}{C_s^2}-1+\rho\partial_\rho \frac{C_a^2}{C_s^2}\bigg)-\frac{2C_a^2H_2}{C_s^3}\partial_{H_2}C_s-\frac{2C_a^2H_3}{C_s^3}\partial_{H_3}C_s\bigg]\\
&+\frac{C_f(C_a^2-C_s^2)}{2(C_f^2-C_s^2)}\bigg[\frac{H_2}{C_f^2}\partial_{H_2}C_f+\frac{H_3}{C_f^2}\partial_{H_3}C_f-\bigg(\frac{1}{C_f}+\frac{\rho\partial_\rho C_f}{C_f^2}\bigg)\bigg(\frac{C_a^2}{C_s^2}-1\bigg)-\frac{1}{C_f}\\
&-\frac{H_2}{C_s^2}\partial_{H_2}C_s-\frac{H_3}{C_s^2}\partial_{H_3}C_s+\bigg(\frac{1}{C_s}+\frac{\rho\partial_\rho C_s}{C_s^2}\bigg)\bigg(\frac{C_a^2}{C_f^2}-1\bigg)+\frac{1}{C_s}\bigg]\bigg\},\\
\gamma^7_{76}=&(\lambda_7-\lambda_6)\bigg\{\frac{C_f}{2(C_f^2-C_s^2)}\bigg[H_3\partial_{H_2}\bigg(\frac{C_a^2}{C_f}-C_f\bigg)-H_2\partial_{H_3}\bigg(\frac{C_a^2}{C_f}-C_f\bigg)\bigg]\\
&+\frac{C_a^2-3C_s^2}{2C_f(C_f^2-C_s^2)}(H_3\partial_{H_2}C_f-H_2\partial_{H_3}C_f)\bigg\},\\
\gamma^7_{12}=&-(\lambda_1-\lambda_2)\bigg\{\frac{C_f}{2(C_f^2-C_s^2)}\bigg[H_3\partial_{H_2}\bigg(\frac{C_a^2}{C_f}-C_f\bigg)-H_2\partial_{H_3}\bigg(\frac{C_a^2}{C_f}-C_f\bigg)\bigg]\\
&+\frac{C_a^2+C_s^2}{2C_f(C_f^2-C_s^2)}(H_3\partial_{H_2}C_f-H_2\partial_{H_3}C_f)\bigg\},\\
\gamma^7_{13}=&-(\lambda_1-\lambda_3)\bigg\{\frac{C_f}{2(C_f^2-C_s^2)}\bigg[H_2\partial_{H_2}\bigg(\frac{C_a^2}{C_f}-C_f\bigg)+H_3\partial_{H_3}\bigg(\frac{C_a^2}{C_f}-C_f\bigg)-\rho\bigg(\frac{C_a^2}{C_s^2}-1\bigg)\partial_{\rho}\bigg(\frac{C_a^2}{C_f}-C_f\bigg)\bigg]\\
&+\frac{C_f^2C_s^2}{2C_a^2(C_f^2-C_s^2)}\bigg[\bigg(\frac{C_a^2}{C_s^2}-1\bigg)\bigg(\frac{C_a^2}{C_f^2}-1+\rho\partial_\rho \frac{C_a^2}{C_f^2}\bigg)+\frac{2C_a^2H_2}{C_f^3}\partial_{H_2}C_f+\frac{2C_a^2H_3}{C_f^3}\partial_{H_3}C_f\bigg]\\
&+\frac{C_f(C_a^2-C_s^2)}{2(C_f^2-C_s^2)}\bigg[\frac{H_2}{C_f^2}\partial_{H_2}C_f+\frac{H_3}{C_f^2}\partial_{H_3}C_f-\bigg(\frac{1}{C_f}+\frac{\rho\partial_\rho C_f}{C_f^2}\bigg)\bigg(\frac{C_a^2}{C_s^2}-1\bigg)-\frac{1}{C_f}\bigg]+\frac{C_f^2(C_a^2-C_s^2)}{2C_a^2(C_f^2-C_s^2)}\bigg\},\\
\gamma^7_{14}=&-(\lambda_1-\lambda_4)\bigg\{\frac{C_f}{2(C_f^2-C_s^2)}\bigg[\frac{\rho}{\gamma}\partial_\rho \bigg(\frac{C_a^2}{C_f}-C_f\bigg)-\partial_S \bigg(\frac{C_a^2}{C_f}-C_f\bigg)\bigg]\\
&-\frac{C_f^2 C_s^2}{2 C_a^2(C_f^2-C_s^2)}\bigg[\frac{1}{\gamma}\partial_\rho \rho\bigg(\frac{C_a^2}{C_f^2}-1\bigg)+\frac{2C_a^2}{C_f^3}\partial_S C_f\bigg]+\frac{C_a^2-C_s^2}{2(C_f^2-C_s^2)}\bigg(\frac{\rho}{\gamma C_f}\partial_\rho C_f-\frac{\partial_S C_f}{C_f}+\frac{1}{\gamma}\bigg)\bigg\},\\
\gamma^7_{15}=&(\lambda_1-\lambda_5)\bigg\{\frac{C_f}{2(C_f^2-C_s^2)}\bigg[H_2\partial_{H_2}\bigg(\frac{C_a^2}{C_f}-C_f\bigg)+H_3\partial_{H_3}\bigg(\frac{C_a^2}{C_f}-C_f\bigg)-\rho\bigg(\frac{C_a^2}{C_s^2}-1\bigg)\partial_{\rho}\bigg(\frac{C_a^2}{C_f}-C_f\bigg)\bigg]\\
&+\frac{C_f^2C_s^2}{2C_a^2(C_f^2-C_s^2)}\bigg[\bigg(\frac{C_a^2}{C_s^2}-1\bigg)\bigg(\frac{C_a^2}{C_f^2}-1+\rho\partial_\rho \frac{C_a^2}{C_f^2}\bigg)+\frac{2C_a^2H_2}{C_f^3}\partial_{H_2}C_f+\frac{2C_a^2H_3}{C_f^3}\partial_{H_3}C_f\bigg]\\
&+\frac{C_f(C_a^2-C_s^2)}{2(C_f^2-C_s^2)}\bigg[\frac{H_2}{C_f^2}\partial_{H_2}C_f+\frac{H_3}{C_f^2}\partial_{H_3}C_f-\bigg(\frac{1}{C_f}+\frac{\rho\partial_\rho C_f}{C_f^2}\bigg)\bigg(\frac{C_a^2}{C_s^2}-1\bigg)-\frac{1}{C_f}\bigg]+\frac{C_f^2(C_a^2-C_s^2)}{2C_a^2(C_f^2-C_s^2)}\bigg\},\\
\gamma^7_{16}=&(\lambda_1-\lambda_6)\bigg\{\frac{C_f}{2(C_f^2-C_s^2)}\bigg[H_3\partial_{H_2}\bigg(\frac{C_a^2}{C_f}-C_f\bigg)-H_2\partial_{H_3}\bigg(\frac{C_a^2}{C_f}-C_f\bigg)\bigg]\\
&+\frac{C_a^2+C_s^2}{2C_f(C_f^2-C_s^2)}(H_3\partial_{H_2}C_f-H_2\partial_{H_3}C_f)\bigg\},\\
\gamma^7_{21}=&0,\ \gamma^7_{23}=0,\ \gamma^7_{24}=0,\ \gamma^7_{25}=0,\ \gamma^7_{26}=\frac{(\lambda_2-\lambda_6)C_f(C_a-C_f)(C_a^2-C_s^2)}{2C_a^2(C_f^2-C_s^2)},\\
\gamma^7_{31}=&(\lambda_3-\lambda_1)\bigg\{\frac{C_f}{2(C_f^2-C_s^2)}\bigg[\rho\bigg(\frac{C_a^2}{C_f^2}-1\bigg)\partial_{\rho}\bigg(\frac{C_a^2}{C_s}-C_s\bigg)-H_2\partial_{H_2}\bigg(\frac{C_a^2}{C_s}-C_s\bigg)-H_3\partial_{H_3}\bigg(\frac{C_a^2}{C_s}-C_s\bigg)\bigg]\\
&-\frac{C_f^2C_s^2}{2C_a^2(C_f^2-C_s^2)}\bigg[\bigg(\frac{C_a^2}{C_f^2}-1\bigg)\bigg(\frac{C_a^2}{C_s^2}-1+\rho\partial_\rho \frac{C_a^2}{C_s^2}\bigg)+\frac{2C_a^2H_2}{C_s^3}\partial_{H_2}C_s+\frac{2C_a^2H_3}{C_s^3}\partial_{H_3}C_s\bigg]\\
&+\frac{C_f(C_a^2-C_s^2)}{2(C_f^2-C_s^2)}\bigg[\bigg(\frac{1}{C_s}+\frac{\rho\partial_\rho C_s}{C_s^2}\bigg)\bigg(\frac{C_a^2}{C_f^2}-1\bigg)+\frac{1}{C_s}-\frac{H_2}{C_s^2}\partial_{H_2}C_s-\frac{H_3}{C_s^2}\partial_{H_3}C_s\bigg]+\frac{C_f^2(C_a^2-C_s^2)}{2C_a^2(C_f^2-C_s^2)}\bigg\},
\end{align*}
\newpage
\begin{align*}
\gamma^7_{32}=&-(\lambda_3-\lambda_2)\bigg\{\frac{C_f}{2(C_f^2-C_s^2)}\bigg[H_3\partial_{H_2}\bigg(\frac{C_a^2}{C_s}-C_s\bigg)-H_2\partial_{H_3}\bigg(\frac{C_a^2}{C_s}-C_s\bigg)\bigg]\\
&+\frac{C_f^2}{C_s(C_f^2-C_s^2)}\bigg(H_3\partial_{H_2}C_s-H_2\partial_{H_3}C_s\bigg)+\frac{C_f(C_a^2-C_s^2)}{2C_s^2(C_f^2-C_s^2)} \bigg(H_3\partial_{H_2}C_s-H_2\partial_{H_3}C_s\bigg)\bigg\},\\
\gamma^7_{34}=&-(\lambda_3-\lambda_4)\bigg\{\frac{C_f}{2(C_f^2-C_s^2)}\bigg[\frac{\rho}{\gamma}\partial_\rho\bigg(\frac{C_a^2}{C_s}-C_s\bigg)-\partial_S\bigg(\frac{C_a^2}{C_s}-C_s\bigg)\bigg]+\frac{C_f(C_a^2-C_s^2)}{2C_s^2(C_f^2-C_s^2)}\bigg(\rho\partial_\rho C_s-\partial_S C_s+\frac{C_s}{\gamma}\bigg)\\
&-\frac{C_f^2C_s^2}{2C_a^2(C_f^2-C_s^2)}\bigg[\frac{2C_a^2}{C_s^3}\partial_S C_s+\frac{1}{\gamma}\bigg(\frac{C_a^2}{C_s^2}-1+\rho\partial_\rho\frac{C_a^2}{C_s^2}\bigg)\bigg]\bigg\},\\
\gamma^7_{35}=&-(\lambda_3-\lambda_5)\bigg\{\frac{C_f}{2(C_f^2-C_s^2)}\bigg[\rho\bigg(\frac{C_a^2}{C_s^2}-1\bigg)\partial_{\rho}\bigg(\frac{C_a^2}{C_s}-C_s\bigg)-H_2\partial_{H_2}\bigg(\frac{C_a^2}{C_s}-C_s\bigg)-H_3\partial_{H_3}\bigg(\frac{C_a^2}{C_s}-C_s\bigg)\bigg]\\
&-\frac{C_f^2C_s^2}{2\rho C_a^2(C_f^2-C_s^2)}\bigg[\frac{1}{2}\partial_\rho \bigg(\rho(\frac{C_a^2}{C_s^2}-1)\bigg)^2+\frac{2\rho C_a^2H_2}{C_s^3}\partial_{H_2}C_s+\frac{2\rho C_a^2H_3}{C_s^3}\partial_{H_3}C_s\bigg]\\
&+\frac{C_f(C_a^2-C_s^2)}{2(C_f^2-C_s^2)}\bigg[\frac{\rho\partial_\rho C_s}{C_s^2}\bigg(\frac{C_a^2}{C_s^2}-1\bigg)+\frac{C_a^2}{C_s^3}-\frac{H_2}{C_s^2}\partial_{H_2}C_s-\frac{H_3}{C_s^2}\partial_{H_3}C_s\bigg]-\frac{C_f^2(C_a^2-C_s^2)}{2C_a^2(C_f^2-C_s^2)}\bigg\},\\
\gamma^7_{36}=&(\lambda_3-\lambda_6)\bigg\{\frac{C_f}{2(C_f^2-C_s^2)}\bigg[H_3\partial_{H_2}\bigg(\frac{C_a^2}{C_s}-C_s\bigg)-H_2\partial_{H_3}\bigg(\frac{C_a^2}{C_s}-C_s\bigg)\bigg]+\frac{C_f^2}{C_s(C_f^2-C_s^2)}\bigg(H_3\partial_{H_2}C_s-H_2\partial_{H_3}C_s\bigg)\\
&+\frac{C_f(C_a^2-C_s^2)}{2C_s^2(C_f^2-C_s^2)} \bigg(H_3\partial_{H_2}C_s-H_2\partial_{H_3}C_s\bigg)\bigg\},\\
\gamma^7_{41}=&\frac{(\lambda_4-\lambda_1)C_f^2C_s^2}{2\gamma C_a^2(C_f^2-C_s^2)}\bigg(\frac{C_a^2}{C_f^2}-1\bigg),\ \gamma^7_{42}=0,\ \gamma^7_{43}=\frac{(\lambda_4-\lambda_3)C_f^2C_s^2}{2\gamma C_a^2(C_f^2-C_s^2)}\bigg(\frac{C_a^2}{C_s^2}-1\bigg),\\
\gamma^7_{45}=&\frac{(\lambda_4-\lambda_3)C_f^2C_s^2}{2\gamma C_a^2(C_f^2-C_s^2)}\bigg(\frac{C_a^2}{C_s^2}-1\bigg),\ \gamma^7_{46}=0,\\
\gamma^7_{51}=&(\lambda_5-\lambda_1)\bigg\{\frac{C_f}{2(C_f^2-C_s^2)}\bigg[\rho\bigg(\frac{C_a^2}{C_f^2}-1\bigg)\partial_{\rho}\bigg(\frac{C_a^2}{C_s}-C_s\bigg)-H_2\partial_{H_2}\bigg(\frac{C_a^2}{C_s}-C_s\bigg)-H_3\partial_{H_3}\bigg(\frac{C_a^2}{C_s}-C_s\bigg)\bigg]\\
&+\frac{C_f^2C_s^2}{2C_a^2(C_f^2-C_s^2)}\bigg[\bigg(\frac{C_a^2}{C_f^2}-1\bigg)\bigg(\frac{C_a^2}{C_s^2}-1+\rho\partial_\rho \frac{C_a^2}{C_s^2}\bigg)+\frac{2C_a^2H_2}{C_s^3}\partial_{H_2}C_s+\frac{2C_a^2H_3}{C_s^3}\partial_{H_3}C_s\bigg]\\
&+\frac{C_f(C_a^2-C_s^2)}{2(C_f^2-C_s^2)}\bigg[\bigg(\frac{1}{C_s}+\frac{\rho\partial_\rho C_s}{C_s^2}\bigg)\bigg(\frac{C_a^2}{C_f^2}-1\bigg)+\frac{1}{C_s}-\frac{H_2}{C_s^2}\partial_{H_2}C_s-\frac{H_3}{C_s^2}\partial_{H_3}C_s\bigg]-\frac{C_f^2(C_a^2-C_s^2)}{2C_a^2(C_f^2-C_s^2)}\bigg\},\\
\gamma^7_{52}=&-(\lambda_5-\lambda_2)\bigg\{\frac{C_f}{2(C_f^2-C_s^2)}\bigg[H_3\partial_{H_2}\bigg(\frac{C_a^2}{C_s}-C_s\bigg)-H_2\partial_{H_3}\bigg(\frac{C_a^2}{C_s}-C_s\bigg)\bigg]\\
&-\frac{C_f^2}{C_s(C_f^2-C_s^2)}\bigg(H_3\partial_{H_2}C_s-H_2\partial_{H_3}C_s\bigg)+\frac{C_f(C_a^2-C_s^2)}{2C_s^2(C_f^2-C_s^2)} \bigg(H_3\partial_{H_2}C_s-H_2\partial_{H_3}C_s\bigg)\bigg\},\\
\gamma^7_{53}=&-(\lambda_5-\lambda_3)\bigg\{\frac{C_f}{2(C_f^2-C_s^2)}\bigg[\rho\bigg(\frac{C_a^2}{C_s^2}-1\bigg)\partial_{\rho}\bigg(\frac{C_a^2}{C_s}-C_s\bigg)-H_2\partial_{H_2}\bigg(\frac{C_a^2}{C_s}-C_s\bigg)-H_3\partial_{H_3}\bigg(\frac{C_a^2}{C_s}-C_s\bigg)\bigg]\\
&-\frac{C_f^2C_s^2}{2\rho C_a^2(C_f^2-C_s^2)}\bigg[\frac{1}{2}\partial_\rho \bigg(\rho\big(\frac{C_a^2}{C_s^2}-1\big)\bigg)^2+\frac{2\rho C_a^2H_2}{C_s^3}\partial_{H_2}C_s+\frac{2\rho C_a^2H_3}{C_s^3}\partial_{H_3}C_s\bigg]\\
&-\frac{C_f(C_a^2-C_s^2)}{2(C_f^2-C_s^2)}\bigg[\frac{\rho\partial_\rho C_s}{C_s^2}\bigg(\frac{C_a^2}{C_s^2}-1\bigg)+\frac{C_a^2}{C_s^3}-\frac{H_2}{C_s^2}\partial_{H_2}C_s-\frac{H_3}{C_s^2}\partial_{H_3}C_s\bigg]-\frac{C_f^2(C_a^2-C_s^2)}{2C_a^2(C_f^2-C_s^2)}\bigg\},\\
\gamma^7_{54}=&-(\lambda_5-\lambda_4)\bigg\{\frac{C_f}{2(C_f^2-C_s^2)}\bigg[\frac{\rho}{\gamma}\partial_\rho\bigg(\frac{C_a^2}{C_s}-C_s\bigg)-\partial_S\bigg(\frac{C_a^2}{C_s}-C_s\bigg)\bigg]+\frac{C_f(C_a^2-C_s^2)}{2C_s^2(C_f^2-C_s^2)}\bigg(\rho\partial_\rho C_s-\partial_S C_s+\frac{C_s}{\gamma}\bigg)\\
&+\frac{C_f^2C_s^2}{2C_a^2(C_f^2-C_s^2)}\bigg[\frac{2C_a^2}{C_s^3}\partial_S C_s+\frac{1}{\gamma}\bigg(\frac{C_a^2}{C_s^2}-1+\rho\partial_\rho\frac{C_a^2}{C_s^2}\bigg)\bigg]\bigg\},\\
\gamma^7_{56}=&(\lambda_5-\lambda_6)\bigg\{\frac{C_f}{2(C_f^2-C_s^2)}\bigg[H_3\partial_{H_2}\bigg(\frac{C_a^2}{C_s}-C_s\bigg)-H_2\partial_{H_3}\bigg(\frac{C_a^2}{C_s}-C_s\bigg)\bigg]\\
&-\frac{C_f^2}{C_s(C_f^2-C_s^2)}\bigg(H_3\partial_{H_2}C_s-H_2\partial_{H_3}C_s\bigg)+\frac{C_f(C_a^2-C_s^2)}{2C_s^2(C_f^2-C_s^2)} \bigg(H_3\partial_{H_2}C_s-H_2\partial_{H_3}C_s\bigg)\bigg\},\\
\gamma^7_{61}=&0,\ \gamma^7_{62}=-\frac{(\lambda_6-\lambda_2)C_f(C_f+C_a)(C_a^2-C_s^2)}{2C_a^2(C_f^2-C_s^2)},\\
 \gamma^7_{63}=&0,\ \gamma^7_{64}=0,\ \gamma^7_{65}=0.
 \end{align*}
\end{small}
With all above calculations, we now conclude that all the coefficients $c^i_{im}$ and $\gamma^i_{km}$ in the decomposed system \eqref{decomposed system} are of order $1$. This finishes the proof of Proposition \ref{coeff}.

\end{document}